\numberwithin{equation}{section}
\newtheorem{Theorem}{Theorem}[section]
\newtheorem{lem}[Theorem]{Lemma}
\newtheorem{prop}[Theorem]{Proposition}
 { \theoremstyle{definition}
\newtheorem{dfn}[Theorem]{Definition}

\newtheorem{rem}[Theorem]{Remark}
\newtheorem{assump}[Theorem]{Assumption}
}
\newcommand{\trans}[1]{#1^\top}
\begin{document}
\allowdisplaybreaks

\newcommand{\arXivNumber}{2110.10958}

\renewcommand{\PaperNumber}{034}

\FirstPageHeading

\ShortArticleName{Witten--Reshetikhin--Turaev Invariants, Homological Blocks, and Quantum Modular Forms}

\ArticleName{Witten--Reshetikhin--Turaev Invariants, \\Homological Blocks, and Quantum Modular Forms\\ for~Unimodular Plumbing H-Graphs}

\Author{Akihito MORI and Yuya MURAKAMI}
\AuthorNameForHeading{A.~Mori and Y.~Murakami}
\Address{Mathematical Institute, Tohoku University, 6-3, Aoba, Aramaki, Aoba-Ku,\\
Sendai 980-8578, Japan}
\Email{\href{mailto:akihito.mori.s5@dc.tohoku.ac.jp}{akihito.mori.s5@dc.tohoku.ac.jp}, \href{mailto:yuya.murakami.s8@dc.tohoku.ac.jp}{yuya.murakami.s8@dc.tohoku.ac.jp}}
\URLaddress{
	\url{https://sites.google.com/view/yuya-murakami/home}}

\ArticleDates{Received November 23, 2021, in final form April 28, 2022; Published online May 07, 2022}

\Abstract{Gukov--Pei--Putrov--Vafa constructed $ q $-series invariants called homological blocks in a physical way in order to categorify Witten--Reshetikhin--Turaev (WRT) invariants and conjectured that radial limits of homological blocks are WRT invariants. In this paper, we prove their conjecture for unimodular H-graphs. As a consequence, it turns out that the WRT invariants of H-graphs yield quantum modular forms of depth two and of weight one with the quantum set $ \mathbb{Q} $. In the course of the proof of our main theorem, we first write the invariants as finite sums of rational functions. We second carry out a systematic study of weighted Gauss sums in order to give new vanishing results for them. Combining these results, we finally prove that the above conjecture holds for H-graphs.}

\Keywords{quantum invariants; Witten--Reshetikhin--Turaev invariants; homological blocks; quantum modular forms; plumbed manifolds; false theta funcitons; Gauss sums}

\Classification{57K31; 57K10; 57K16; 11F27; 11L05; 11T24}

\section{Introduction} \label{sec:intro}

Many authors have studied Witten--Reshetikhin--Turaev (WRT) invariants for 3-manifolds.
Law\-ren\-ce--Zagier~\cite[Theorem 1]{LZ} proved that the WRT invariant for the Poincar\'{e} homology sphere $\Sigma(2, 3, 5)$ coincides with a radial limit of a false theta function.
Hikami~\cite[Theorem 9]{H_Bries} gene\-ra\-lized it for Brieskorn homology spheres $\Sigma(p_{1}, p_{2}, p_{3})$.
Subsequently, Hikami~\cite{H_Lattice} expressed the WRT invariants for Seifert fibered homology spheres with four singular fibers as a sum of limiting values of Eichler integrals.
Furthermore, Hikami~\cite{H_Lattice2} generalised it for Seifert fibered homology spheres with $M$-singular fiberes.

These works allow one to expect that WRT invariants are radial limits for some $ q $-series with integer coefficients.

For plumbed 3-manifolds, Gukov--Pei--Putrov--Vafa gave a definition of a candidate, called homological blocks from a physical viewpoint, and conjectured that they categorify WRT invariants, that is, WRT invariants of any closed orientable $ 3 $-manifolds are linear combinations of radial limits of homological blocks~\cite[Conjecture 2.1]{GPPV}.

For Seifert fibered homology spheres, Fuji--Iwaki--Murakami--Terashima~\cite{FIMT} introduced $ q $-series called the WRT functions and proved that their radial limits are WRT invariants.
They are identified with homological blocks by Andersen--Mistegård~\cite{AM}.
Independently in a different way, Andersen--Mistegård~\cite[Theorem 4]{AM} proved that radial limits of homological blocks coincide with WRT invariants for Seifert fibered homology spheres.
To our knowledge, the precise relation between homological blocks and Eichler integrals for the cases treated in Hikami~\cite{H_Lattice2} is not known except for \cite{CCFGH,Ch,GMP, W}.

Seifert manifolds are special cases of plumbed manifolds.
Plumbed graphs yield Seifert manifolds if they have the only one vertex with the degree at least three.
H-graphs shown in Figure \ref{fig:H-graph} are the simplest graphs which are not such graphs.
It is expected that H-graphs yield non-Siefert manifolds.
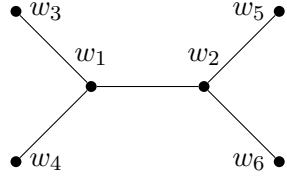
\begin{figure}[h]	\centering
		\begin{tikzpicture}
			\node[shape=circle,fill=black, scale = 0.4] (1) at (0,0) { };
			\node[shape=circle,fill=black, scale = 0.4] (2) at (1.5,0) { };
			\node[shape=circle,fill=black, scale = 0.4] (3) at (-1,-1) { };
			\node[shape=circle,fill=black, scale = 0.4] (4) at (-1,1) { };
			\node[shape=circle,fill=black, scale = 0.4] (5) at (2.5,1) { };
			\node[shape=circle,fill=black, scale = 0.4] (6) at (2.5,-1) { };
			
			\node[draw=none] (B1) at (0,0.4) {$ w_1 $};
			\node[draw=none] (B2) at (1.5, 0.4) {$ w_2 $};
			\node[draw=none] (B3) at (-0.6,1) {$ w_3 $};
			\node[draw=none] (B4) at (-0.6,-1) {$ w_4 $};
			\node[draw=none] (B5) at (2.1,1) {$ w_5 $};		
			\node[draw=none] (B6) at (2.1,-1) {$ w_6 $};	
			
			\path [-](1) edge node[left] {} (2);
			\path [-](1) edge node[left] {} (3);
			\path [-](1) edge node[left] {} (4);
			\path [-](2) edge node[left] {} (5);
			\path [-](2) edge node[left] {} (6);
		\end{tikzpicture}
		\caption{The H-graph $ \Gamma $.}\label{fig:H-graph}
	\end{figure}

In the case of H-graphs, Bringmann--Mahlburg--Milas~\cite{BMM} showed that radial limits of homological blocks coincide with sums of radial limits of multiple Eichler integrals.
Moreover, they proved that the radial limits of a homological block make up a quantum modular form of depth two and of weight one with the quantum set $ \mathbb{Q} $.

Meanwhile, for non-Seifert manifolds, it remains to be seen whether WRT invariants relate to homological blocks \cite[Conjecture~2.1]{GPPV} and have modularity.

In this paper, we study WRT invariants of H-graphs.
We will write them explicitly as weighted Gauss sums.
Moreover, we will prove that they relate to homological blocks and they yield quantum modular forms of depth two and of weight one with the quantum set $ \mathbb{Q} $.

Let us explain our setting.
Let $ \Gamma $ be the H-graph with vertices $ 1, \dots, 6 $ and weights $ w_v \in \Z_{<0} $ for each vertex $ v $ shown in Figure~\ref{fig:H-graph} such that its linking matrix
\begin{gather*}
W = \pmat{w_1 & 1 & 1 & 1 & 0 & 0 \\
	1 & w_2 & 0 & 0 & 1 & 1 \\
	1 & 0 & w_3 & 0 & 0 & 0 \\
	1 & 0 & 0 & w_4 & 0 & 0 \\
	0 & 1 & 0 & 0 & w_5 & 0 \\
	0 & 1 & 0 & 0 & 0 & w_6 }
\end{gather*}
is negative definite and its determinant is $ 1 $.

Then we obtain the link $ \calL(\Gamma) $ defined by $ \Gamma $ shown in Figure \ref{fig:surgery_diagram}.
Let $M_3(\Gamma)$ be the plumed 3-ma\-nifold obtained from $ S^3 $ through the surgery along the diagram $ \calL(\Gamma) $.
We obtain $H_1(M_{3}(\Gamma), \Z)\allowbreak \cong \Z^{6} / W\big(\Z^{6}\big)$ by use of the Mayer--Vietoris sequence.
Since $\det W = 1$, $M_{3}(\Gamma)$ is an integral homology sphere.
\begin{figure}[h]	\centering
\begin{tikzpicture}[scale=0.5]
			\begin{knot}[
				clip width=5,
				flip crossing=2,
				flip crossing=4,
				flip crossing=6,
				flip crossing=7,
				flip crossing=9
				]
				\strand[thick] (0, 0) circle [x radius=3cm, y radius=1.5cm];
				\strand[thick] (0, 0) +(4cm, 0pt) circle [x radius=3cm, y radius=1.5cm];
				\strand[thick] (0, 0) +(-2.5cm, 2cm) circle [radius=1.5cm];
				\strand[thick] (0, 0) +(-2.5cm, -2cm) circle [radius=1.5cm];
				\strand[thick] (0, 0) +(6.5cm, 2cm) circle [radius=1.5cm];
				\strand[thick] (0, 0) +(6.5cm, -2cm) circle [radius=1.5cm];
				
				\node (1) at (0, 2) {$w_{1}$};
				\node (2) at (4, 2) {$w_{2}$};
				\node (3) at (-2.5, 4) {$w_{3}$};
				\node (4) at (-2.5, -4) {$w_{4}$};
				\node (5) at (6.5, 4) {$w_{5}$};
				\node (6) at (6.5, -4) {$w_{6}$};
			\end{knot}
		\end{tikzpicture}
\caption{The surgery diagram $ \calL(\Gamma) $ corresponding to $\Gamma$.}\label{fig:surgery_diagram}
\end{figure}
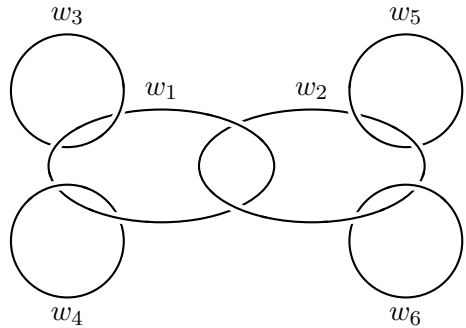

{\samepage
Such H-graphs yield Seifert manifolds if $ w_v = -1 $ for some $ 3 \le v \le 6 $.
Bringmann--Mahlburg--Milas~\cite[Theorem~1.2]{BMM} showed that there exist precisely $ 39 $ equivalence classes of such H-graphs with $ w_v \le -2 $ for each $ 3 \le v \le 6 $ up to graph isomorphism.

}

For a positive integer $k$, let $\tau_k$ be the WRT invariant normalised as $\tau_{k}\big(S^{3}\big) = 1$.
We study a~relation between the WRT invariant $\tau_{k}(M_{3}(\Gamma))$ and the homological block $\widehat{Z}_\Gamma (q)$.

For each complex number $ z $, we denote $ \bm{e}(z) := {\rm e}^{2\pi {\rm i} z} $.
For a point $ \tau \in \bbH $, let $ q := \bm{e}(\tau) $.
Gukov--Pei--Putrov--Vafa~\cite[formula~(3.145)]{GPPV} defined the homogical block $ \widehat{Z}(q) $ of the plumed graph.
In~our case, it can be written as
\begin{gather}
\widehat{Z}_\Gamma (q)		= \widehat{Z}_{\Gamma, \delta} (q)
= \widehat{Z}_\delta (q):=
q^{(18 - w_1 - \cdots - w_6)/4} \nonumber
\\
\hphantom{\widehat{Z}_\Gamma (q)= \widehat{Z}_{\Gamma, \delta} (q) = \widehat{Z}_\delta (q):=}
{}\times \mathrm{v.p.}\int_{\abs{z_v} = 1, 1 \le v \le 6}
\frac{\big(z_3 - z_3^{-1}\big)\big(z_4 - z_4^{-1}\big)\big(z_5 - z_5^{-1}\big)\big(z_6 - z_6^{-1}\big)}{\big(z_1 - z_1^{-1}\big)\big(z_2 - z_2^{-1}\big)}\nonumber\\
\hphantom{\widehat{Z}_\Gamma (q)= \widehat{Z}_{\Gamma, \delta} (q) = \widehat{Z}_\delta (q):=}
{}\times
\Theta_{-W, \delta} (q; z_1, \dots, z_6)
 \prod_{1 \le v \le 6} \frac{{\rm d}z_v}{2\pi {\rm i} z_v},
\label{eq:homological_block_for_H}
\end{gather}
where $ \delta := \trans{(1, 1, 1, 1, 1, 1)} $ is the signature of indexes of vertices of $ \Gamma $, $ \mathrm{v.p.} $ denotes the Cauchy principal value, and
\begin{gather*}
\Theta_{-W, \delta} (q; z_1, \dots, z_6) :=
\sum_{\bm{l} = \trans{(l_1, \dots, l_6)} \in 2\Z^6 + \delta} q^{-\trans{\bm{l}} W^{-1} \bm{l}/4} \prod_{1 \le v \le 6} z_v^{l_v}
\end{gather*}
is the theta function of the linking matrix~$ W $.

\cite[Conjecture 2.1 and relation~(A.28)]{GPPV} states a conjectural relation between the WRT invariants and radial limits of the homological blocks.
In our case, their relation is
\begin{equation} \label{eq:GPPV}
	\tau_k(M_3(\Gamma)) = \frac{1}{2\big(\zeta_{2k} - \zeta_{2k}^{-1}\big)}
	\lim_{\substack{q \to \zeta_k \\
			\abs{q} < 1	}
	} \widehat{Z}_\Gamma (q),
\end{equation}
where $ \zeta_k := {\rm e}^{2\pi {\rm i}/k} $.
In this paper, we prove this relation.

\begin{Theorem} \label{thm:WRT_homological_block_main}
	The notation being as above, $(\ref{eq:GPPV})$ holds.
\end{Theorem}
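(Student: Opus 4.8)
The plan is to compute both sides of $(\ref{eq:GPPV})$ explicitly as finite sums and show that they coincide. First I would apply the surgery formula for the WRT invariant to the plumbing link $\calL(\Gamma)$. Since $W$ is negative definite with $\det W = 1$, the framing and signature corrections are completely determined, and $\tau_k(M_3(\Gamma))$ takes the shape of a weighted Gauss sum
\begin{gather*}
\tau_k(M_3(\Gamma)) = c_k \sum_{\bm{n}} \zeta_{4k}^{\trans{\bm{n}} W \bm{n}}\,
\frac{\prod_{v=3}^{6}\big(\zeta_{2k}^{n_v}-\zeta_{2k}^{-n_v}\big)}{\prod_{v=1}^{2}\big(\zeta_{2k}^{n_v}-\zeta_{2k}^{-n_v}\big)},
\end{gather*}
with $c_k$ an explicit constant and $\bm{n}$ running over a suitable finite index set of colourings on which the denominators do not vanish; this is \cite[relation~(A.28)]{GPPV} specialised to the H-graph. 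The Gauss sums over the four degree-one vertices $3,4,5,6$ can then be carried out in closed form by completing the square, which rewrites $\tau_k(M_3(\Gamma))$ as a finite sum of rational functions in $\zeta_{2k}$ supported on the two degree-three variables $n_1,n_2$.

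Second I would extract a closed formula for the radial limit of $\widehat{Z}_\Gamma(q)$. Expanding the rational prefactor in the integrand of $(\ref{eq:homological_block_for_H})$ in power series and using the Cauchy principal value to handle the simple poles at $z_1,z_2 = \pm1$, the contour integral picks out coefficients of $\Theta_{-W,\delta}$, so that
\begin{gather*}
\widehat{Z}_\Gamma(q) = q^{(18 - w_1 - \cdots - w_6)/4}
\sum_{\bm{l} \in 2\Z^6 + \delta} \psi(\bm{l})\, q^{-\trans{\bm{l}} W^{-1} \bm{l}/4}
\end{gather*}
for an explicit weight $\psi$ that is odd in each variable and supported on $l_3,\dots,l_6 \in \{\pm1\}$; this is a double (false) theta function of weight one, matching the depth-two picture of Bringmann--Mahlburg--Milas \cite{BMM}. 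Its radial limit as $q \to \zeta_k$ from inside the unit disk is then evaluated by the standard asymptotic analysis of partial theta functions (Euler--Maclaurin expansion and Abel summation, as in \cite{LZ,H_Bries,BMM}), producing once more a finite exponential sum with Gaussian phases over residues modulo a suitable multiple of $2k$.

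Third I would compare the two finite sums. After reorganising the indices, the ``diagonal'' contributions match termwise, and what is left over is a collection of error terms: Gauss sums twisted simultaneously by linear polynomial factors in $n_1,n_2$ and by quantum-integer factors attached to the two degree-three vertices, together with the residue contributions coming from $n_1 \equiv 0$ and $n_2 \equiv 0 \pmod{k}$. Invoking the vanishing theorems for weighted Gauss sums established earlier in the paper, all of these error terms vanish, and $(\ref{eq:GPPV})$ follows.

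The hard part will be the third step, and specifically the vanishing of the weighted Gauss sums that occur there. The two vertices of degree three take $\Gamma$ outside the Seifert range treated in \cite{FIMT,AM}, and the matching then requires vanishing statements for Gauss sums carrying several quantum-integer factors together with nontrivial polynomial weights, which do not follow from the classical quadratic Gauss sum evaluations; the systematic study of such sums carried out earlier in the paper is precisely what supplies them.
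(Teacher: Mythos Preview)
Your plan is essentially the paper's own proof: Proposition~\ref{prop:WRT_inv} carries out your first step (reducing the six-variable surgery sum to a two-variable sum of rational functions via Gauss reciprocity on the degree-one vertices), Proposition~\ref{prop:Phi_lim} together with \cite[Proposition~5.3]{BMM} carries out your second step (false-theta expansion of $\widehat{Z}_\Gamma$ and its radial limit via Euler--Maclaurin), and the comparison in Section~\ref{sec:WRT} with the interpolation of the incomplete sum is exactly your third step, with Proposition~\ref{prop:Gauss_sum} supplying the weighted Gauss-sum vanishings you flag as the crux. There is no substantive divergence in method.
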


The proof of Theorem \ref{thm:WRT_homological_block_main} takes several steps as below:
\begin{enumerate}\itemsep=0pt
\setlength{\leftskip}{0.45cm}
	\item[$Step~1.$]
We give a formula (Proposition \ref{prop:WRT_inv}) for the WRT invariant $\tau_{k}(M_{3}(\Gamma))$ analogous to the formula given by Lawrence--Rozansky~\cite[formula~(4.2)]{LR} for any Seifert fibered manifold.
	This formula involves a certain type of quadratic forms of rank two which appear in false theta functions in~\cite[Proposition 5.3]{BMM} related to the homological blocks.
	\item[$Step~2.$] 
The sum involved in the formula (Proposition \ref{prop:WRT_inv}) is incomplete, that is, it is like a~form
	\begin{gather*}
	\sum_{n \in (\Z \smallsetminus k\Z)/kN\Z} f(n).
	\end{gather*}
	That means, we can not apply usual techniques in number theory.
	To avoid this situation, we interpolate the sum in the above formula for $\tau_{k}(M_{3}(\Gamma))$ by adding terms which diverge a priori.
	These terms turn out to be zero by vanishing results of ``weighted'' Gauss sums (Proposition \ref{prop:Gauss_sum}).
	\item[$Step~3.$] 
In Proposition \ref{prop:Phi_lim}, we prove that the radial limit of the homological block coincides with the interpolated formula of the WRT invariant by using the Euler--Maclaurin summation formula (Lemma \ref{lem:Euler-Maclaurin}) and vanishing results of weighted Gauss sums (Proposition \ref{prop:Gauss_sum}).
\end{enumerate}
As for our formula in \emph{Step}~1, we need to handle rank two quadratic forms which do not appear in Seifert case.
In \emph{Step}~2, we need some vanishing results for Gauss sums which we develop in this article.

We need more notation to explain the next main result.
Let $ S = ( l_{jk})_{1 \le j, k \le 2} $ be the submatrix of $ -W^{-1} = (l_{jk})_{1 \le j, k \le 6} $.
Let
\begin{gather*}
Q(m, n) := (m, n) S \trans{(m, n)}, \\
M := w_3 w_4, \\
N := w_5 w_6,
\\
a := -w_2 w_5 w_6 + w_5 + w_6, \\
c := -w_1 w_3 w_4 + w_3 + w_4.
\end{gather*}
Then we have $ Q(m, n) = Mam^2 - 2MNmn + Ncn^2 $ by a direct calculation and $ S^{-1}\big(\Z^2\big) = \frac{1}{M}\Z \oplus \frac{1}{N}\Z $ by Remark~\ref{rem:S_inv_factor}.

For non-zero integers $ w $ and $ w' $, we define a map $ \chi^{(w, w')} \colon \Z \to \Z/2ww'\Z \to \{ \pm1, 0 \} $ by
\begin{gather*}
\chi^{(w, w')}(n) :=
\begin{cases}
	ee' & \text{if} \ n \equiv w'e + we' +ww' \pmod{2ww'} \ \text{for some}\ e, e' \in \{ \pm 1 \},
\\
	0 & \text{otherwise}.
\end{cases}
\end{gather*}
We note that the generating function of this map appears in a calculation of the WRT invariant of the H-graph $ \Gamma $ (Proposition \ref{prop:WRT_inv}).
We also define a map
\[
\veps \colon \ (2S)^{-1}\big(\Z^2\big) \to (2S)^{-1}\big(\Z^2\big)/\Z^2 \to \{ \pm1, 0 \}
\]
 by
\[
 \veps(\alpha, \beta) := \chi^{(w_3, w_4)}(2M\alpha) \chi^{(w_5, w_6)}(2N\beta) .
 \]

Bringmann--Mahlburg--Milas~\cite[Theorems~1.1, 1.3 and~4.1]{BMM} shows that the homological block $\widehat{Z}_\Gamma (q)$ forms a depth two quantum modular form.
Combined with their results, Theorem \ref{thm:WRT_homological_block_main} implies the following theorem.

\begin{Theorem} 
	Let $ f_\Gamma \colon \Q \to \bbC $ be a map defined as
	\begin{gather*}
	f_\Gamma\bigg( \frac{h}{k} \bigg)
	= \frac{1}{k^2}
	\sum_{\gamma = \trans{(\alpha, \beta)} \in (2S)^{-1}(\Z^2) \cap [0, k)^2}
	\veps(\gamma)\, \bm{e}\bigg(\frac{h}{k}Q(\gamma) \bigg) \alpha \beta
	\end{gather*}
	for an irreducible fraction $ h/k \in \Q $.
	Then $ f_{\Gamma} $ is a quantum modular form of depth two and of weight one with the quantum set $ \Q $.
	Moreover, for any $ k \in \Z_{>0} $ we have
	\begin{gather*}
	f_\Gamma\left( \frac{1}{k} \right)
	=
	\frac{2 \left( \zeta_{2k}-\zeta_{2k}^{-1} \right)}{\zeta_{k}^{-(18 + \sum_{v=1}^{6}w_{v} + \sum_{v=3}^{6} 1/w_{v})/4}}
	\tau_k(M_3(\Gamma)).
	\end{gather*}
\end{Theorem}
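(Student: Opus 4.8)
The plan is to obtain both assertions by combining Theorem~\ref{thm:WRT_homological_block_main} with the quantum modularity theorems of Bringmann--Mahlburg--Milas~\cite[Theorems~1.1, 1.3 and~4.1]{BMM}. The only point that requires genuine work is to recognise the finite weighted Gauss sum $f_\Gamma$ as the normalised radial limit function of the homological block $\widehat{Z}_\Gamma$ on all of $\Q$; once this identification is in place, the value formula is just \eqref{eq:GPPV} rewritten and the modularity is just \cite{BMM} transported along it. Most of this identification is already contained in the proof of Theorem~\ref{thm:WRT_homological_block_main}, specifically in \emph{Step}~3.

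First I would treat the value at $h/k = 1/k$. In \emph{Step}~3 (Proposition~\ref{prop:Phi_lim}) the radial limit $\lim_{q \to \zeta_k} \widehat{Z}_\Gamma(q)$ is evaluated by the Euler--Maclaurin formula (Lemma~\ref{lem:Euler-Maclaurin}) together with the vanishing of weighted Gauss sums (Proposition~\ref{prop:Gauss_sum}), and the output is a finite sum of exactly the form appearing in the definition of $f_\Gamma$: a sum over $\gamma = \trans{(\alpha,\beta)}$ ranging over a fundamental domain for $(2S)^{-1}(\Z^2)/\Z^2$, weighted by $\veps(\gamma)$, the Gaussian $\bm{e}(Q(\gamma)/k)$, and the linear factor $\alpha\beta$. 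Here one uses $(2S)^{-1}(\Z^2) = \frac{1}{2M}\Z \oplus \frac{1}{2N}\Z$ together with $M = w_3 w_4$ and $N = w_5 w_6$ (Remark~\ref{rem:S_inv_factor}), the fact that $S = (l_{jk})_{1 \le j,k \le 2}$ has integer entries because $\det W = 1$, and the definition $\veps(\alpha,\beta) = \chi^{(w_3,w_4)}(2M\alpha)\,\chi^{(w_5,w_6)}(2N\beta)$ in order to match the sign weight produced by the residue calculation underlying Proposition~\ref{prop:WRT_inv}. Keeping track of the prefactor $q^{(18 - w_1 - \cdots - w_6)/4}$ in \eqref{eq:homological_block_for_H}, one reads off $\lim_{q \to \zeta_k} \widehat{Z}_\Gamma(q) = \zeta_k^{-(18 + \sum_{v=1}^6 w_v + \sum_{v=3}^6 1/w_v)/4}\, f_\Gamma(1/k)$, and substituting this into \eqref{eq:GPPV} gives the displayed identity.

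For the quantum modularity I would rerun the computation of \emph{Step}~3 with $\zeta_k$ replaced by $\bm{e}(h/k) = \zeta_k^h$ for an arbitrary $h$ coprime to $k$. Lemma~\ref{lem:Euler-Maclaurin} applies unchanged and Proposition~\ref{prop:Gauss_sum} is stated here uniformly in $h$, so the same manipulation yields $f_\Gamma(h/k) = \bm{e}\big(\frac{h}{k}(18 + \sum_{v=1}^6 w_v + \sum_{v=3}^6 1/w_v)/4\big)\, \lim_{q \to \zeta_k^h} \widehat{Z}_\Gamma(q)$ for every irreducible fraction $h/k$. Thus $f_\Gamma$ coincides, up to the explicit framing factor in \eqref{eq:homological_block_for_H}, with the radial limit function of $\widehat{Z}_\Gamma$ on $\Q$, equivalently with the combination of limiting values of double Eichler integrals of the rank-two false theta functions attached to $\Gamma$ in \cite[Proposition~5.3]{BMM}. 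Bringmann--Mahlburg--Milas~\cite[Theorems~1.1, 1.3 and~4.1]{BMM} prove that this object is a quantum modular form of depth two and of weight one with the quantum set $\Q$, which is the remaining claim.

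The main obstacle is the constant-by-constant bookkeeping in the last two steps: identifying the finite sum produced by the Euler--Maclaurin evaluation of the radial limit of $\widehat{Z}_\Gamma$ --- whose summand involves $\veps$, the quadratic form $Q$ reduced modulo $k$, and the linear weight restricted to the fundamental domain $[0,k)^2$ --- with the defining sum of $f_\Gamma$, and verifying that the elementary prefactor relating the two is precisely the framing anomaly already carried in the normalisation of \cite{BMM}, so that transporting their quantum modular form to $f_\Gamma$ introduces nothing new. One also has to confirm that the vanishing results for weighted Gauss sums invoked in \emph{Step}~3 hold for all $h$ coprime to $k$, not only $h = 1$; this is exactly the generality in which Proposition~\ref{prop:Gauss_sum} is proved here.
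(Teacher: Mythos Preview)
Your proposal is correct and matches the paper's approach: the paper states this theorem as a direct consequence of Theorem~\ref{thm:WRT_homological_block_main} together with \cite[Theorems~1.1, 1.3 and~4.1]{BMM}, and the identification of $f_\Gamma(h/k)$ with the radial limit that you describe is precisely Proposition~\ref{prop:Phi_lim} (plus Proposition~\ref{prop:Gauss_sum}(i) to obtain the full quantum set $\Q$ in Theorem~\ref{thm:BMM}(i)), which the paper records separately as Theorem~\ref{thm:quantum_mod_form}. One small point you elide: Proposition~\ref{prop:Phi_lim} computes the radial limit of $F_{Q,\veps}$, not of $\widehat{Z}_\Gamma = \tfrac{1}{2}\,q^{-(18+\sum w_v+\sum 1/w_v)/4}\bigl(F_{Q_+,\veps}-F_{Q_-,\veps}\bigr)$ directly, so to reach your displayed relation $\lim\widehat{Z}_\Gamma = \zeta_k^{-(18+\cdots)/4} f_\Gamma(1/k)$ you also need $\lim F_{Q_-,\veps} = -\lim F_{Q_+,\veps}$, which follows from the substitution $\beta\mapsto k-\beta$, the symmetry $\veps(\alpha,\beta)=\veps(\alpha,1-\beta)$, and Proposition~\ref{prop:Gauss_sum}(ii).
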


This theorem yields new examples of quantum modular forms.
These examples have two interesting properties.
They have explicit expressions and WRT invariants appear in their special values.

This paper will be organised as follows.
In Section \ref{sec:pre}, we discuss WRT invariants, homological blocks, and quantum modular forms.
In Section \ref{sec:notation}, we set notation which we use throughout this paper.
We also define the rank two false theta function $ F_{Q, \veps}(\tau) $ which coincides with the homological block $ \widehat{Z}_\Gamma (q) $ without a simple factor.
In Section~\ref{sec:Gauss_sum}, we prove vanishing results for several types of Gauss sums, which we need to calculate a radial limit of $ F_{Q, \veps}(\tau) $ and $ \widehat{Z}_\Gamma (q) $ in Section \ref{sec:false_theta_lim}.
Finally, in Section \ref{sec:WRT}, we calculate the WRT invariant of $ M_3(\Gamma) $ and prove Theorem~\ref{thm:WRT_homological_block_main}.\looseness=1

\section{Preliminaries} \label{sec:pre}

In this section, we prepare basic materials.

\subsection{Witten--Reshetikhin--Turaev invariants} 

Reshetikhin--Turaev~\cite{RT} constructed the invariant $\tau_{k}$ for a pair $(M, L)$ of a 3-manifold $M$ and a~framed link $L$ in $M$.
This invariant is parametrised by a positive integer $k$.
In our situation \mbox{$L = \varnothing$}, so $\tau_{k}$ is an invariant for 3-manifolds.
This invariant is called the $\SU(2)$ Witten--Reshetikhin--Turaev (WRT) invariant.
Turaev gives a detailed description~\cite{R}.

For plumbed manifolds, Gukov--Pei--Putrov--Vafa caluculated the WRT invariant expli\-ci\-tly~\cite[formula~(A.10)]{GPPV}:
\begin{align*}
	\tau_{k}(M_{3}(\Gamma))
	={}
	&\frac{\bm{e}(-\sigma/8) \zeta_{k}^{3\sigma/4}}{2(2k)^{V/2} \big(\zeta_{2k}-\zeta_{2k}^{-1}\big)}
	\sum_{n \in (\Z^{V} / 2k\Z^{V}) \smallsetminus (k\Z^{V} / 2k\Z^{V})}
	\prod_{v=1}^{V}\zeta_{k}^{w_{v}(n_{v}^{2} - 1) / 4}
	\\
	&\times\bigg( \frac{1}{\zeta_{k}^{n_{v}/2} - \zeta_{k}^{-n_{v}/2}} \bigg)^{\deg(v)-2}
	\prod_{(v, v') \in \mathrm{Edges}} \frac{\zeta_{k}^{n_{v}n_{v'}/2} - \zeta_{k}^{-n_{v}n_{v'}/2}}{2},
\end{align*}
where $ b_+ $ and $ b_- $ are the number of positive and negative eigenvalues counted with multiplicities of the linking matrix of a plumbed graph $\Gamma$,
$\sigma := b_+ - b_- $ is the signature,
$V$ is the number of vertices of $\Gamma$,
$ \deg(v) $ is the degree for an $v$th vertex,
and $w_{v}$ is a weight for an $v$th vertex.
We~use this formula in Section \ref{sec:WRT}.

\subsection{Homological blocks} 

In this section, we show a definition of homological blocks and express them as false theta functions.
Though any mathematical definitions of these invariants are not given for general cases, they provided a rigorous definition for a particular case in which 3-manifolds are plumbed.

A plumbed graph is a tree whose vertices are weighted by integers.
A 3-manifold is said to be plumbed if it is obtained by the surgery along a plumbed graph.
\begin{dfn}[{\cite[Section 3.4]{GPPV}}]
	Let $G$ be a plumbed graph with vertices $1, \dots, N$ and $\delta$ be as in Section \ref{sec:intro}.
	For the plumbed $ 3 $-manifold $M_{3}(G)$ and $ b \in ( H_1(M_3(G), \Z) + \delta)/ \{ \pm1 \} \cong \big(2\Z^{N} / 2W\big(\Z^{N}\big) + \delta\big) / \{ \pm1 \} $, the homological block $\widehat{Z}_{G, b} (q)$ is defined as follows:
	\begin{gather*}
	\widehat{Z}_{G, b} (q)
	=
	q^{-\sum_{v = 1}^{N} (w_{v} + 3)/4}
	\mathrm{v.p.} \int_{\abs{z_{v}}=1, v =1, \dots, N} \prod_{v = 1}^{N} \frac{{\rm d}z_{v}}{2\pi {\rm i} z_{v}} (z_{v} - 1/z_{v})^{2 - \deg(v)}
	\Theta_{-W, b}(z),
	\end{gather*}
	where $w_{v}$ is the weight of the vertex $v$, $ \mathrm{v.p.} $ denotes the Cauchy principal value, $\deg(v)$ is the degree of $v$, $W$ is the linking matrix of $\calL(G)$, and
	\begin{gather*}
	\Theta_{-W, b} (q; z_1, \dots, z_N) :=
	\sum_{\bm{l} = \trans{(l_1, \dots, l_N)} \in 2W (\Z^N) + b} q^{-\trans{\bm{l}} W^{-1} \bm{l}/4} \prod_{1 \le v \le N} z_v^{l_v}.
	\end{gather*}
\end{dfn}

If $ G $ is an H-graph $ \Gamma $, the above definition of $ \widehat{Z}_{G, b} (q) $ coincides with (\ref{eq:homological_block_for_H}).
\begin{dfn} 
	For $ \tau \in \bbH$, we define false theta functions $F_{Q_{+}, \veps}(\tau)$ and $F_{Q_{-}, \veps}(\tau)$ by
	\begin{gather*}
	F_{Q_{+}, \veps}(\tau) := \sum_{0 \le \gamma \in (2S)^{-1} (\Z^2)} \veps(\gamma) q^{Q_+(\gamma)}, \qquad
	F_{Q_{-}, \veps}(\tau) := \sum_{0 \le \gamma \in (2S)^{-1} (\Z^2)} \veps(\gamma) q^{Q_-(\gamma)},
	\end{gather*}
	where $ Q_+(m, n) := Q(m, n)$, $Q_-(m, n) := Q(m, -n) $.
\end{dfn}

In order to calculate radial limits of the homological block $\widehat{Z}_\Gamma (q)$, we need an expression of~$\widehat{Z}_\Gamma (q)$ as a sum of false theta functions given by Bringmann--Mahlburg--Millas.

\begin{prop}[{\cite[Proposition 5.3]{BMM}}] 
	\begin{gather*}
	\widehat{Z}_\Gamma (q) = \frac{1}{2} q^{(-18 - w_1 - \cdots - w_6 - 1/w_3 - 1/w_4 - 1/w_5 - 1/w_6)/4}
	\left( F_{Q_{+}, \veps}(\tau) - F_{Q_{-}, \veps}(\tau) \right).
	\end{gather*}
\end{prop}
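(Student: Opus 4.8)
The plan is to compute the contour integral in \eqref{eq:homological_block_for_H} by expanding each factor $(z_v - z_v^{-1})^{\pm 1}$ into a (formal) Laurent series in $z_v$ and then extracting the constant term term-by-term against the theta series $\Theta_{-W,\delta}$. For the four ``numerator'' vertices $v \in \{3,4,5,6\}$ we have exactly $z_v - z_v^{-1} = z_v - z_v^{-1}$, contributing $\sum_{e_v = \pm 1} e_v\, z_v^{e_v}$; for the two ``denominator'' vertices $v \in \{1,2\}$ we must expand $(z_v - z_v^{-1})^{-1}$, and here the Cauchy principal value prescription is what selects the symmetric combination $\tfrac12(\sum_{n\ge 1} z_v^{-(2n-1)} - \sum_{n \ge 1} z_v^{2n-1})$ — equivalently, we may write $(z_1 - z_1^{-1})^{-1}(z_2 - z_2^{-1})^{-1}$ as the symmetrized sum over $s_1, s_2 \in \{\pm 1\}$ of $\tfrac14 s_1 s_2 \sum_{n_1, n_2 \ge 1} z_1^{s_1(2n_1-1)} z_2^{s_2(2n_2-1)}$. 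Multiplying everything out, the integral picks off those $\bm l \in 2\Z^6 + \delta$ whose first two coordinates $l_1, l_2$ are (signed) odd integers produced by the denominator expansion and whose last four coordinates are $\pm 1$; the first four entries of $\bm l$ are thereby forced into a bounded range, which is precisely the mechanism that collapses the six-dimensional lattice sum to a rank-two sum.

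Next I would carry out the change of variables that identifies the surviving sum with $F_{Q_\pm, \veps}$. Writing $\bm l = (l_1, l_2, e_3, e_4, e_5, e_6)$ with $e_v \in \{\pm 1\}$ and using $\trans{\bm l} W^{-1} \bm l = -\trans{\bm l}(-W^{-1})\bm l$, one splits the quadratic form according to the block structure of $-W^{-1}$: the $(1,2)$-block is $S$, and the cross-terms between $\{l_1, l_2\}$ and $\{e_3,\dots,e_6\}$, together with the $(3,\dots,6)$-block evaluated at the $\pm 1$ vectors, reorganize so that $-\trans{\bm l}W^{-1}\bm l/4$ becomes $Q(\gamma)$ for a shifted variable $\gamma \in (2S)^{-1}(\Z^2)$, the shift being exactly the one recorded in the definition of $\veps$ via $\chi^{(w_3,w_4)}$ and $\chi^{(w_5,w_6)}$. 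Concretely: summing $z_v^{e_v}$ over $e_v = \pm 1$ and tracking how $e_3, e_4$ (resp.\ $e_5, e_6$) enter the exponent of $q$ through the entries of $W^{-1}$ produces the residue $w'e + we' + ww'$ modulo $2ww'$ appearing in $\chi^{(w,w')}$, and the product of signs $e_3 e_4 e_5 e_6$ against $s_1 s_2$ from the denominator is what builds $\veps(\gamma)$ and the weight $\alpha\beta$ implicit in $F_{Q_\pm}$ — here one uses the identity $Q(m,n) = Mam^2 - 2MNmn + Ncn^2$ and $S^{-1}(\Z^2) = \tfrac1M\Z \oplus \tfrac1N\Z$ recorded before the statement, and the fact (Remark~\ref{rem:S_inv_factor}) that $(2S)^{-1}(\Z^2)$ is the correct lattice of summation. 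The difference $F_{Q_+, \veps} - F_{Q_-, \veps}$ arises because the denominator expansion is antisymmetric under $s_2 \mapsto -s_2$ (this is the principal-value symmetrization), which flips $n \mapsto -n$ in $Q$, i.e.\ sends $Q_+$ to $Q_-$; the two halves combine with opposite signs, giving the factor $\tfrac12(F_{Q_+,\veps} - F_{Q_-,\veps})$.

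Finally, the prefactor is a matter of bookkeeping: the overall $q$-power $q^{(18 - w_1 - \cdots - w_6)/4}$ in \eqref{eq:homological_block_for_H} must be matched against $q^{(-18 - w_1 - \cdots - w_6 - 1/w_3 - 1/w_4 - 1/w_5 - 1/w_6)/4}$, the discrepancy $q^{(36 + 1/w_3 + 1/w_4 + 1/w_5 + 1/w_6)/4}$ being absorbed into the shift of $\gamma$ (completing the square in $Q$ around the shift contributes exactly $\sum_{v=3}^6 1/w_v$ worth of $q$-exponent, and the constant $36/4 = 9$ comes from the four $e_v^2 = 1$ terms and the two denominator contributions — this is where I expect the most error-prone arithmetic and where one must be scrupulous about the normalization of $\Theta_{-W,\delta}$ versus $\Theta_{-W,b}$). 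The main obstacle is therefore not conceptual but the precise tracking of the linear shift and the constant in the quadratic form through the change of variables: one must verify that expanding $(z_1-z_1^{-1})^{-1}(z_2-z_2^{-1})^{-1}$ in the principal-value sense, multiplying by the genuine Laurent polynomial from vertices $3$–$6$, and extracting the constant term against $\Theta_{-W,\delta}$, reproduces \emph{exactly} the lattice $(2S)^{-1}(\Z^2)$, the sign $\veps$, and the weight — with no stray factors of $2$ — which is ultimately a finite check once the dictionary between $\bm l$ and $\gamma$ is pinned down. Granting the computation in \cite[Section~5]{BMM}, this reduces to confirming that the stated $S$, $M$, $N$, $a$, $c$, and the maps $\chi^{(w,w')}$, $\veps$ are the images of this procedure, which I would verify by specializing to the Poincaré-sphere-type example or to one of the $39$ cases of \cite[Theorem~1.2]{BMM} as a sanity check.
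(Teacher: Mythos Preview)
The paper does not give its own proof of this proposition; it is quoted verbatim from \cite[Proposition~5.3]{BMM} and used as a black box. There is therefore nothing in the present paper to compare your sketch against.

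That said, your outline follows the expected route (and, up to details, the one in \cite{BMM}): expand each $(z_v - z_v^{-1})^{\pm 1}$, with the principal-value symmetrization for $v=1,2$, extract the constant term against $\Theta_{-W,\delta}$, and reduce the six-dimensional lattice sum to a rank-two sum indexed by $(2S)^{-1}(\Z^2)$. One genuine slip: you speak of ``the weight $\alpha\beta$ implicit in $F_{Q_\pm}$,'' but there is no such weight. By definition $F_{Q_\pm,\veps}(\tau) = \sum_{0 \le \gamma} \veps(\gamma)\, q^{Q_\pm(\gamma)}$, a pure false theta series; the factor $\alpha\beta$ only enters later, in the \emph{radial-limit value} of Proposition~\ref{prop:Phi_lim}, via the Euler--Maclaurin expansion. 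Conflating the two would derail the computation. Your accounting for the $q$-prefactor is also off: the extra $\sum_{v=3}^6 1/w_v$ in the exponent arises from the diagonal entries $(W^{-1})_{vv}$ evaluated at $l_v = \pm 1$ for $v=3,\dots,6$ (after completing the square in the $(l_1,l_2)$-block), not from a ``$36/4 = 9$'' count; you flag this yourself as the error-prone step, and it is. These are bookkeeping issues rather than conceptual gaps, and the strategy is sound, but since the paper does not carry out the computation you would need to consult \cite[Section~5]{BMM} directly to check the constants.
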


\subsection{Quantum modular forms} 

Zagier~\cite{Zagier_quantum} introduced quantum modular forms.
We describe its formal definition as follows.

\begin{dfn}[{\cite{Zagier_quantum}, \cite[Section 3.2]{BM}}]
	A map $ f \colon \calQ \to \bbC $ is a quantum modular form of weight $ k \in \frac{1}{2} \Z $
	for a subgroup $ \Gamma \subset \SL_2(\Z) $ (we assume $\Gamma \subset \Gamma_0(4)$ if $ k \in 1/2 + \Z$\,)
	and the quantum set $ \calQ \subset \Q $ if for any $ \gamma \in \Gamma $, the function $ f - \restrict{f}{k}\gamma $ can be extended to an open set of $ \R $ real analytically.
	
	We denote the $ \R $-vector space of such forms by $ \calQ_k(\Gamma) $.
\end{dfn}

Zagier~\cite[Examples 0--5]{Zagier_quantum} gave 6 examples of quantum modular forms.
Several authors studied quantum modular property of the WRT invariants for Seifert manifolds.
For example, Lawrence--Zagier~\cite{LZ} considered the Poincar\'{e} homology sphere $ \Sigma(2, 3, 5) $, Hikami~\cite[Proposition 8 and Theorem 9]{H_Bries} considered the Brieskorn homology sphere $ \Sigma(p_1, p_2, p_3) $ with $ p_1^{-1} + p_2^{-1} + p_3^{-1} < 1 $, and Hikami~\cite[Propositions 2, 4 and 8]{H_Seifert} considered spherical Seifert manifolds.
Bringmann--Rolen~\cite[Theorem 1.1]{BM} constructed quantum modular forms by special values of the $ L $-functions of cusp forms.

In our case, homological blocks are essentially rank two false theta functions and are not quantum modular forms.
They are depth two quantum modular forms defined by Bringmann--Kaszian--Milas~\cite[Definition 3]{BKM}.

\begin{dfn}[{\cite[Definition 3]{BKM}}]
	A map $ f \colon \calQ \to \bbC $ is a quantum modular form of depth two, weight $ k \in \frac{1}{2} \Z $
	for a subgroup $ \Gamma \subset \SL_2(\Z) $ $ ( \text{we assume $ \Gamma \subset \Gamma_0(4) $ if $ k \in 1/2 + \Z $}) $
	and the quantum set $ \calQ \subset \Q $ if for any $ \gamma \in \Gamma $, there exist $ k_1, \dots, k_r \in \frac{1}{2}\Z $ and an open subset $ R \subset \R $ such that
	\begin{gather*}
	f - \restrict{f}{k}\gamma \in \calO(R) \oplus \calQ_{k_1}(\Gamma) \calO(R) \oplus \cdots \oplus \calQ_{k_r}(\Gamma) \calO(R),
	\end{gather*}
	where $ \calO(R) $ is an $ \R $-vector space of real analytic functions on $ R $.
\end{dfn}

Bringmann--Mahlburg--Milas~\cite[Theorems 1.1, 1.3, 3.1 and 4.1]{BMM} proved the following theorem.

\begin{Theorem} \label{thm:BMM}
	Let $ S \in \Sym_2^+(\Z) $ be a positive definite symmetric matrix and $ Q(m, n) := (m, n) S \trans{(m, n)} = Am^2 + 2Bmn + Cn^2$ be the corresponding quadratic form such that \mbox{$ A, B, C \in \Z $}.
	Let $ \veps \colon (2S)^{-1}\big(\Z^2\big) \to (2S)^{-1}\big(\Z^2\big)/\Z^2 \to \bbC $ be a map such that
	$ \veps(\alpha, \beta) = \veps(1 - \alpha, \beta) = \veps(\alpha, 1 - \beta) = \veps(1 - \alpha, 1 - \beta) $.
	Let $ K \in \Z_{>0} $ and
	\begin{gather*}
	F_{K, Q, \veps}(\tau) := \sum_{0 \le \gamma \in (2S)^{-1}(\Z^2)} \veps(\gamma) q^{KQ(\gamma)}.
	\end{gather*}
	Then the followings hold:
	\begin{enumerate}
		\item[$(i)$] 
		{\rm (\cite[Theorems 1.1 and 3.1]{BMM})}
		The function $ F_{K, Q, \veps} $ is a quantum modular form of depth two and of weight one for the group $ \Gamma(8 \lcm(A, C) K \det S) $ with the quantum set
		\begin{gather*}
		\calQ_{K, Q, \veps} := \left\{\! \frac{h}{k} \in \Q \bigg|
		\gcd(h, k) = 1, \,
		k \in \Z_{>0}, \!
		\sum_{0 \le \gamma \in (2S)^{-1}(\Z^2)/k\Z^2}\!\!\! \veps(\gamma) \bm{e}\bigg( \frac{h}{k} KQ(\gamma) \bigg) = 0
		\!\right\}\!.
		\end{gather*}
		\item[$(ii)$] 
		{\rm (\cite[Theorems 1.3 and 4.1]{BMM})}
		Suppose that there exist positive integers $ M, N, a, c $ and an integer $ b $ such that $ A = Ma, B = 2MNb, C = Nc $ and $ ac - MNb^2 = 1 $.
		Let $ \alpha_1 \neq \alpha_2, \beta_1 \neq \beta_2 $ be rational numbers in the open interval $ (0, 1) $ such that the denominators of irreducble fraction expansions of $ \alpha_j $ and $ \beta_j $ are $ 2M $ and $ 2N $ respectively,
		$ \alpha_1^2 \equiv \alpha_2^2 \pmod{\frac{1}{2M}\Z} $, and $ \beta_1^2 \equiv \beta_2^2 \pmod{\frac{1}{2N}\Z} $.
		For a rational number $ \alpha $, let $ \calS(\alpha) := \{ \alpha, 1 - \alpha \} $.
		Let $ \calS_1 := \calS(\alpha_1) \times \calS(\beta_1) \cup \calS(\alpha_2) \times \calS(\beta_2) $,
		$\calS_2 := \calS(\alpha_1) \times \calS(\beta_2) \cup \calS(\alpha_2) \times \calS(\beta_1)$,
		$ \calS := \calS_1 \cup \calS_2 $, and
		$ \veps \colon (2S)^{-1}\big(\Z^2\big) \to (2S)^{-1}\big(\Z^2\big)/\Z^2 \to \bbC $ be a map such that
		\begin{gather*}
		\veps (\gamma) =
		\begin{cases}
			(-1)^{j+1} & \text{if} \ \gamma \in \calS_i + \Z^2,
\\
			0 & \text{if} \ \gamma \notin \calS + \Z^2.
		\end{cases}
		\end{gather*}
		Then we have $ \calQ_{2, Q, \veps} = \Q $.
		In particular, the function $ F_{2, Q, \veps} $ is a quantum modular form of depth two and of weight one for the group $ \Gamma(16MN \lcm(a, c) \det S) $ and the quantum set $ \Q $.
	\end{enumerate}
\end{Theorem}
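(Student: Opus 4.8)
The plan is to establish the depth-two quantum modularity of $F_{K,Q,\veps}$ by realizing it, after a non-holomorphic completion, as a holomorphic iterated Eichler integral attached to the positive definite binary form $Q$, and then to read off the quantum set from a Euler--Maclaurin analysis of its radial limits. First I would use the invariance $\veps(\alpha,\beta)=\veps(1-\alpha,\beta)=\veps(\alpha,1-\beta)=\veps(1-\alpha,1-\beta)$ to rewrite the cone sum over $0\le\gamma$ as a sum over the full lattice $(2S)^{-1}(\Z^2)$ weighted by $\tfrac14\bigl(1+\operatorname{sgn}(\gamma_1)\bigr)\bigl(1+\operatorname{sgn}(\gamma_2)\bigr)$. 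Expanding the product splits $F_{K,Q,\veps}$ into four pieces: the term with no sign factor is a genuine (weight-one, since the lattice has rank two) theta series, the two terms carrying a single $\operatorname{sgn}$ are rank-two false theta functions of depth one, and the term carrying $\operatorname{sgn}(\gamma_1)\operatorname{sgn}(\gamma_2)$ is the depth-two part.

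The key analytic step is the modular completion. Following the Zwegers--Vign\'eras error-function technique, I would replace each $\operatorname{sgn}(\gamma_i)$ by a suitable complementary error function (of argument proportional to $\gamma_i\sqrt{\operatorname{Im}\tau}$), producing a non-holomorphic completion $\widehat F$ that transforms as a vector-valued modular form of weight one for a principal congruence subgroup. The difference $\widehat F-F_{K,Q,\veps}$ is then a double (iterated) Eichler integral whose inner integrands are built from unary theta series of weight $\tfrac32$, the shadows. Since the full-theta piece is genuinely modular and $\widehat F$ transforms exactly, for each $\gamma\in\Gamma$ the cocycle $F_{K,Q,\veps}-\restrict{F_{K,Q,\veps}}{1}\gamma$ equals the difference of the non-holomorphic pieces; the product structure of the two error functions then unfolds this difference, by integration by parts, into a real-analytic period function \emph{plus} products of the depth-one Eichler integrals (themselves weight-$k_i$ quantum modular forms) with real-analytic functions. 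This is exactly the membership $f-\restrict{f}{k}\gamma\in\calO(R)\oplus\calQ_{k_1}(\Gamma)\calO(R)\oplus\cdots\oplus\calQ_{k_r}(\Gamma)\calO(R)$ required for depth two. Clearing denominators so that $KQ$ is integral on the relevant cosets and the theta multiplier is trivial yields the level $\Gamma\bigl(8\lcm(A,C)K\det S\bigr)$.

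To pin down the quantum set in part $(i)$, I would analyze the radial limit $\tau\to h/k$ through the Euler--Maclaurin summation formula (Lemma~\ref{lem:Euler-Maclaurin}). Writing $\tau=h/k+\mathrm it$ with $t\to0^+$ and grouping lattice points by residue $\rho$ modulo $k\Z^2$, we have $q^{KQ(\gamma)}=\bm e\bigl(\tfrac hk KQ(\gamma)\bigr)\,\mathrm e^{-2\pi t\,KQ(\gamma)}$, so the potentially divergent leading contribution is governed by the weighted Gauss sum $\sum_{\rho}\veps(\rho)\bm e\bigl(\tfrac hk KQ(\rho)\bigr)$ over $(2S)^{-1}(\Z^2)/k\Z^2$. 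The radial limit exists (and the cocycle pieces continue real-analytically across $h/k$) precisely when this sum vanishes, which identifies the quantum set with $\calQ_{K,Q,\veps}$.

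Finally, for part $(ii)$ the whole problem reduces to showing that this weighted Gauss sum vanishes for \emph{every} reduced fraction $h/k$ when $K=2$ and $\veps$ is the $\pm1$-valued symbol supported on $\calS=\calS_1\cup\calS_2$ with $\veps=+1$ on $\calS_1$ and $\veps=-1$ on $\calS_2$. I would evaluate it by lifting the support cosets to $(2S)^{-1}(\Z^2)\cap[0,k)^2$ and completing the square, so that each contribution becomes $\bm e\bigl(\tfrac hk\,2Q(\bar\gamma)\bigr)$ times a classical two-dimensional quadratic Gauss sum in the integral shift $2S\bar\gamma\in\Z^2$. Using $A=Ma$, $C=Nc$, $B=2MNb$, the relation $ac-MNb^2=1$, and the congruences $\alpha_1^2\equiv\alpha_2^2\pmod{\tfrac1{2M}\Z}$ and $\beta_1^2\equiv\beta_2^2\pmod{\tfrac1{2N}\Z}$, one checks that the evaluated $\calS_1$- and $\calS_2$-contributions coincide, so the opposite signs force cancellation and $\calQ_{2,Q,\veps}=\Q$. \textbf{The main obstacle} is precisely this uniform-in-$k$ Gauss sum evaluation: the cross term $2bmn$ couples the two coordinates, and at primes $p\mid 2MN\lcm(a,c)$ the local Gauss factors must be matched carefully (this is where $ac-MNb^2=1$ is indispensable), so the cancellation must be organized prime-by-prime rather than by a single reflection $\gamma\mapsto-\gamma$ as in the rank-one case. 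Tracking the resulting denominators gives the level $\Gamma\bigl(16MN\lcm(a,c)\det S\bigr)$.
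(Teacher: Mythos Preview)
The paper does not contain a proof of this theorem. Theorem~\ref{thm:BMM} is stated in Section~2.3 as a result of Bringmann--Mahlburg--Milas, with explicit attributions in the statement itself (part~$(i)$ to \cite[Theorems~1.1 and~3.1]{BMM}, part~$(ii)$ to \cite[Theorems~1.3 and~4.1]{BMM}) and the lead-in sentence ``Bringmann--Mahlburg--Milas~\cite[Theorems~1.1, 1.3, 3.1 and~4.1]{BMM} proved the following theorem.'' The authors simply quote the result and use it (together with their Proposition~\ref{prop:Phi_lim}) to deduce Theorem~\ref{thm:quantum_mod_form}; they do not reprove it. So there is no ``paper's own proof'' against which to compare your proposal.

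That said, your outline is broadly in the spirit of how \cite{BMM} proceeds: the depth-two quantum modularity in part~$(i)$ is obtained there by relating $F_{K,Q,\veps}$ to iterated (double) Eichler integrals with weight-$3/2$ theta shadows, and the identification of the quantum set comes from an Euler--Maclaurin asymptotic expansion exactly of the type recorded in this paper as Lemma~\ref{lem:Euler-Maclaurin}. For part~$(ii)$, the vanishing of the weighted Gauss sum is indeed the crux, and \cite{BMM} handles it by an argument close to what this paper reworks (in a slightly different setting) in Lemmas~\ref{lem:Gauss_sum_zero_general} and~\ref{lem:Gauss_sum_indep_general}: one splits according to whether $\gcd(M,k)$ or $\gcd(N,k)$ exceeds~$1$, showing the inner Gauss sum vanishes in that case, and otherwise shows the sum is independent of the support coset so that the signed sum over $\calS_1$ and $\calS_2$ cancels. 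Your ``prime-by-prime'' phrasing is more elaborate than what is actually needed; the dichotomy $\gcd(MN,k)>1$ versus $\gcd(MN,k)=1$ already suffices.
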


Here we remark that in the setting in Theorem \ref{thm:BMM}$(ii)$, we have $ (2S)^{-1}\big(\Z^2\big) = \frac{1}{2M}\Z \oplus \frac{1}{2N}\Z $ by the same argument as in Remark \ref{rem:S_inv_factor}.
In Theorem \ref{thm:quantum_mod_form}, we will treat the quantum modularity of $ F_{Q, \veps} $ instead of $ F_{2Q, \veps} $ in Theorem \ref{thm:BMM}$(i)$.


\subsection{A reciprocity formula for Gauss sums}


To calculate the WRT invariant of $ M_3(\Gamma) $, we need the reciprocity formula in the following proposition.

\begin{prop}[{\cite[Theorem 1]{DT}}] \label{prop:reciprocity}
	Let $ L $ be a lattice of finite rank $ n $ equipped with a non-degenerated symmetric $ \Z $-valued bilinear form $ \sprod{\cdot, \cdot} $
	and let $ \sigma $ be the signature of the quadratic form $ \sprod{x, h(y)} $.
	We write
	\begin{gather*}
	L' := \{ y \in L \otimes \R \mid \sprod{x, y} \in \Z \text{ for all } x \in L \}
	\end{gather*}
	for the dual lattice.
	Let $ 0 < k \in \abs{L'/L} \Z$, $z \in \frac{1}{k} L $,
	and $ h \colon L \otimes \R \to L \otimes \R $ be a self-adjoint automorphism such that $ h(L') \subset L' $ and $ \frac{k}{2} \sprod{y, h(y)} \in \Z $ for all $ y \in L' $.
	Then it holds
	\begin{gather*}
\sqrt{\abs{L'/L}}\! \sum_{x \in L/kL}\!\! \bm{e} \bigg( \frac{1}{2k} \sprod{x, h(x)} + \sprod{x, z}\!\! \bigg)
= \frac{\bm{e}(\sigma/8) k^{n/2}}{\sqrt{\abs{\det h}}}
	\!\!	\sum_{y \in L'/h(L')}\!\! \bm{e} \bigg({-}\frac{k}{2} \sprod{y + z, h^{-1}(y + z)}\!\! \bigg).
	\end{gather*}
\end{prop}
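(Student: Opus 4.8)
The statement is a multidimensional Gauss--sum reciprocity (a Milgram/Landsberg--Schaar type formula), and the plan is to prove it by Poisson summation applied to a Gaussian regularization, extracting the signature phase $\bm{e}(\sigma/8)$ by analytic continuation. First I would fix a $\Z$-basis $e_1, \dots, e_n$ of $L$ and write $G := (\sprod{e_i, e_j})$ and $H$ for the matrix of $h$, so that $\sprod{x, h(x)} = \trans{x} G H x$ with $GH$ symmetric (by self-adjointness of $h$) of signature $\sigma$, while $L' = G^{-1}\Z^n$ and $\abs{L'/L} = \abs{\det G}$. In these coordinates the left-hand sum runs over $x \in \Z^n/k\Z^n$, and the hypothesis $\frac{k}{2}\sprod{y, h(y)} \in \Z$ for $y \in L'$ guarantees that the summand $\bm{e}(\frac{1}{2k}\trans{x}GHx + \trans{x}Gz)$ is a genuine function on the finite group $\Z^n/k\Z^n$. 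Similarly, the condition $h(L') \subset L'$ makes the right-hand summand well defined on $L'/h(L')$, whose cardinality is $\abs{\det h}$.

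The core step is a Poisson summation on the real vector space $V = L \otimes \R$. For a complex parameter $s$ in the right half-plane I would form the Gaussian-damped sum
\[
\Theta(s) := \sum_{x \in L} \bm{e}\bigg( \frac{1}{2k}\sprod{x, h(x)} + \sprod{x, z} \bigg) \exp\big({-}s \, \|x\|^2\big),
\]
where $\|\cdot\|$ is an auxiliary positive definite reference form, so that the series converges absolutely for $\mathrm{Re}(s) > 0$. Because $q(x) := \frac{1}{2k}\sprod{x, h(x)}$ satisfies the quasi-periodicity $q(x + ky) = q(x) + \sprod{x, h(y)} + \frac{k}{2}\sprod{y, h(y)}$ for $y \in L'$, reorganizing $x$ into residues of $L/kL$ and their coset representatives and letting $s \to 0^+$ recovers the finite left-hand Gauss sum, while the covolume of $L$ with respect to $\sprod{\cdot, \cdot}$, namely $\sqrt{\abs{\det G}} = \sqrt{\abs{L'/L}}$, produces the prefactor on the left. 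Applying the Poisson summation formula to the Schwartz function $x \mapsto \bm{e}(q(x) + \sprod{x, z})\exp(-s\|x\|^2)$ and completing the square converts the Gaussian whose quadratic matrix is $\frac{1}{k}GH$ (plus damping) into its Fourier dual, whose quadratic matrix is proportional to $(GH)^{-1}$; this is the mechanism producing the dual sum $\sum_{y \in L'/h(L')} \bm{e}(-\frac{k}{2}\sprod{y + z, h^{-1}(y + z)})$, together with the factor $k^{n/2}/\sqrt{\abs{\det h}}$ arising from the determinant of the Gaussian and the comparison of the covolumes of $L'$ and $h(L')$.

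I expect the main obstacle to be twofold, both localized in the Gaussian-integral computation rather than in the combinatorics. First, because $h$ is in general indefinite the quadratic phase cannot be integrated directly; I would rotate $s \mapsto s + \mathrm{i}\epsilon$ (equivalently, deform the relevant modular parameter to the real axis from within $\bbH$) and invoke analytic continuation together with the one-dimensional Fresnel integral $\int_\R e^{\pi \mathrm{i} \lambda t^2}\, {\rm d}t = \abs{\lambda}^{-1/2}\bm{e}(\mathrm{sgn}(\lambda)/8)$ applied on each eigenspace of $GH$. By Sylvester's law of inertia the product of these one-dimensional phases equals $\bm{e}(\sigma/8)$, independent of the diagonalization, which is exactly where the signature enters. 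Second, one must justify that the limit $s \to 0^+$ commutes with the (now convergent, after the phase rotation) summations and that the coset bookkeeping identifies the dual lattice sum precisely with $L'/h(L')$; here the hypotheses $k \in \abs{L'/L}\Z$, $h(L') \subset L'$, and $\frac{k}{2}\sprod{y, h(y)} \in \Z$ are precisely what render all the exponentials well defined on the finite quotients and make the two limiting procedures agree. Collecting the covolume factor $\sqrt{\abs{L'/L}}$, the Jacobian $k^{n/2}/\sqrt{\abs{\det h}}$, and the signature phase $\bm{e}(\sigma/8)$ then yields the asserted identity.
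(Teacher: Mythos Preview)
The paper does not prove this proposition at all: it is stated with the attribution \cite[Theorem~1]{DT} and used as a black box. So there is no ``paper's own proof'' to compare against; the paper simply imports the result from Deloup--Turaev.

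Your sketch is the standard route to such lattice Gauss-sum reciprocity formulas and is essentially the method used in the cited reference: regularize by a Gaussian, apply Poisson summation, compute the resulting Fresnel integral eigenspace by eigenspace to extract the signature phase $\bm{e}(\sigma/8)$, and then let the regularization parameter tend to zero. The bookkeeping you describe (covolume $\sqrt{\abs{L'/L}}$ on the left, the Jacobian $k^{n/2}/\sqrt{\abs{\det h}}$ and the quotient $L'/h(L')$ on the right, and the role of the hypotheses $k \in \abs{L'/L}\Z$, $h(L')\subset L'$, $\tfrac{k}{2}\sprod{y,h(y)}\in\Z$ in making the summands well defined on the finite quotients) is correct in outline. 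As a proof sketch it is fine; to turn it into a complete proof you would need to make the analytic-continuation step precise (the indefinite case is where most write-ups spend their effort), but there is no missing idea.
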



\section{Notation} \label{sec:notation}


In this section, we prepare notation which we use throughout this paper.
The setting in Sec\-tion~\ref{sec:intro} is a particular case of the setting in this section.

We fix an integer $ h $ and a positive integer $ k $ such that $ \gcd(h, k) = 1 $.
Let $ M$, $N$, $a$, $c $ be positive integers and an integer $ b $ such that $ ac - MNb^2 = 1 $.
Let
\begin{gather*}
S = \pmat{Ma & MNb \\ MNb & Nc}
\end{gather*}
be a positive definite symmetric matrix.
The symbol $ \gamma = \trans{(\alpha, \beta)} $ denotes any element in $ \Q^2 $.
Here, $ t $ is the transpose of vectors.
Let $ Q(\alpha, \beta) := (\alpha, \beta) S \trans{(\alpha, \beta)} = Ma\alpha^2 + 2MNb \alpha \beta + Nc\beta^2 $ be the corresponding quadratic form.
Let $ \veps \colon (2S)^{-1}\big(\Z^2\big) \to (2S)^{-1}\big(\Z^2\big)/\Z^2 \to \bbC $ be a map.
For a complex number $ z $, we denote $ \bm{e}(z) := {\rm e}^{2\pi {\rm i} z} $.
Let $ \bbH $ be the upper half plane, that is, the set of complex numbers whose imaginary part are positive.
For a point $ \tau \in \bbH $, let {\samepage
\begin{gather*}
F_{Q, \veps}(\tau) := \sum_{0 \le \gamma \in (2S)^{-1}(\Z^2)} \veps(\gamma) q^{Q(\gamma)}
\end{gather*}
be a false theta function.}

We can rewrite a lattice $ S^{-1}\big(\Z^2\big) $ explicitly as below.
\begin{rem} \label{rem:S_inv_factor}
	Let
	\begin{gather*}
	A := \pmat{c & -Nb \\ -Mb & a} \in \SL_2(\Z).
	\end{gather*}
	Then we have
	\begin{gather*}
	SA = \pmat{M & 0 \\ 0 & N}
	\end{gather*}
	and $ S^{-1}(\Z^2) = \frac{1}{M}\Z \oplus \frac{1}{N}\Z $.
	Thus, we obtain $ \veps \colon \frac{1}{M}\Z/\Z \oplus \frac{1}{N}\Z/\Z \to \bbC $.
\end{rem}
\section{Vanishing results of Gauss sums} \label{sec:Gauss_sum}

In this section, we prove several claims which state vanishing results of weighted Gauss sums.
We~will use the results in this section to compute the WRT invariants and limit values of the homological blocks.
Let us keep the notation in Section \ref{sec:notation}, for example, $ h, k, Q(\gamma), \veps \colon (2S)^{-1}\big(\Z^2\big)/\allowbreak\Z^2 \to \bbC $.

\subsection{Main results in this section} 

Throughout this section, we argue that Gauss sums vanish in the setting in Section~\ref{sec:notation} and assume the following assumption for a map $ \veps \colon (2S)^{-1}\big(\Z^2\big)/\Z^2 \to \bbC $.

\begin{assump}\qquad \label{assump:Gauss_sum}
	\begin{enumerate}
		\item[$(i)$] 
		It holds
		\begin{gather*}
		\sum_{\gamma \in (2S)^{-1}(\Z^2)/\Z^2} \veps(\gamma) = 0.
		\end{gather*}
		\item[$(ii)$] 
		For $ \gamma = \trans{(\alpha, \beta)} \in (2S)^{-1}\big(\Z^2\big) $ such that $ \veps(\gamma) \neq 0 $, the denominator of the irreducible fractions of $ \alpha $ and $ \beta $ are $ 2M $ and $ 2N $ respectively.
		\item[$(iii)$] 
		For $ \gamma = \trans{(\alpha, \beta)} \in (2S)^{-1}\big(\Z^2\big) $ such that $ \veps(\gamma) \neq 0 $,
		\begin{gather*}
		M\alpha \pmod \Z, \qquad
		N\beta \pmod \Z, \qquad
		M\alpha^2 \pmod \Z,
\\
		N\beta^2 \pmod \Z, \qquad
		2MN\alpha \beta \pmod \Z
		\end{gather*}
		are independent of $ \gamma $.
	\end{enumerate}
\end{assump}

Here we remark that the map $ \veps \colon (2S)^{-1}\big(\Z^2\big)/\Z^2 \to \{ \pm1, 0 \} $ defined in Section \ref{sec:intro} satisfies the above assumption.

Our main results in this section are the following.

\begin{prop} \label{prop:Gauss_sum}
	Under Assumption $ \ref{assump:Gauss_sum} $, the followings are true:
	\begin{enumerate}
		\item[$(i)$] 
		It holds
\begin{gather*}
		\sum_{\gamma \in (2S)^{-1}(\Z^2)/k\Z^2} \veps(\gamma)
		\bm{e}\bigg( \frac{h}{k} Q(\gamma) \bigg)
		= 0.
		\end{gather*}
		\item[$(ii)$]
		We assume that there exist two maps $ \chi \colon \frac{1}{2M}\Z \to \frac{1}{2M}\Z/\Z \to \bbC $ and $ \psi \colon \frac{1}{2N}\Z \to \frac{1}{2N}\Z/\Z \to \bbC $ such that
		\begin{gather*}
		\sum_{\alpha \in \frac{1}{2M}\Z/\Z} \chi(\alpha) 		
		= \sum_{\beta \in \frac{1}{2N}\Z/\Z} \psi(\beta)
		= 0
		\end{gather*}
		and $ \veps\big(\trans{(\alpha, \beta)}\big) = \chi(\alpha) \psi(\beta) $ for any $ \alpha \in \frac{1}{2M}\Z/\Z $ and $ \beta \in \frac{1}{2N}\Z/\Z $.
		Then for any map $ C \colon \Q / k\Z \to \bbC $, we have
		\begin{gather*}
		\sum_{\gamma = \trans{(\alpha, \beta)} \in (2S)^{-1}(\Z^2)/k\Z^2} \veps(\gamma)
		\bm{e}\bigg(\frac{h}{k} Q(\gamma)\bigg) C(\alpha)
\\ \qquad
		{}=
		\sum_{\gamma = \trans{(\alpha, \beta)} \in (2S)^{-1}(\Z^2)/k\Z^2} \veps(\gamma)
		\bm{e}\bigg(\frac{h}{k} Q(\gamma)\bigg) C(\beta)
		= 0.
		\end{gather*}
		\item[$(iii)$] 
		Under the assumption in $(ii)$, let $ B \colon \Q / k\Z \to \bbC $ be a map such that
		\begin{gather*}
		\sum_{\alpha \in \frac{1}{2M}\Z/\Z} \chi(\alpha) \widetilde{B}(\alpha) = 0,
		\end{gather*}
		where $ \widetilde{B}(\alpha) := \sum_{0 \le m < k} B(\alpha + m) $.
		Then it holds
		\begin{gather*}
			\sum_{\mu \in k\Z \oplus \Z/2kS(\Z^2)} \bm{e}\bigg(\frac{1}{4k} {\trans{\mu}} S^{-1} \mu\bigg)
			\sum_{\gamma = \trans{(\alpha, \beta)} \in (2S)^{-1}(\Z^2)/k\Z^2} \veps(\gamma)
			\bm{e}\bigg(\frac{1}{k} {\trans{\gamma}} \mu\bigg) B(\alpha) C(\beta)
\\ \qquad
		{}	=
			\sum_{\mu \in k\Z^2/2kS(\Z^2)} \bm{e}\bigg(\frac{1}{4k} {\trans{\mu}} S^{-1} \mu\bigg)\!\!
			\sum_{\gamma = \trans{(\alpha, \beta)} \in (2S)^{-1}(\Z^2)/k\Z^2}\!\! \veps(\gamma)
			\bm{e}\bigg(\frac{1}{k} {\trans{\gamma}} \mu\bigg) B(\alpha) C(\beta)
\\ \qquad
			{}= 0.
		\end{gather*}
		\item[$(iv)$] 
		Under the assumption in $(ii)$, we have
		\begin{gather*}
			\sum_{\mu \in k\Z \oplus \Z/2kS(\Z^2)} \bm{e}\bigg(\frac{1}{4k} {\trans{\mu}} S^{-1} \mu\bigg)
			\sum_{\gamma = \trans{(\alpha, \beta)} \in (2S)^{-1}(\Z^2) \cap [0, k)^2} \veps(\gamma)
			\bm{e}\bigg(\frac{1}{k} {\trans{\gamma}} \mu\bigg) B_1(\alpha) B_1(\beta)
\\ \qquad
			{}=
			\sum_{\mu \in \Z \oplus k\Z/2kS(\Z^2)}\!\!\!\!\!\! \bm{e}\bigg(\frac{1}{4k} {\trans{\mu}} S^{-1} \mu\bigg)\!\!\!\!
			\sum_{\gamma = \trans{(\alpha, \beta)} \in (2S)^{-1}(\Z^2) \cap [0, k)^2} \!\!\!\!\!\veps(\gamma)
			\bm{e}\bigg(\frac{1}{k} {\trans{\gamma}} \mu\bigg) B_1(\alpha) B_1(\beta)
\\ \qquad
			{}=
			\sum_{\mu \in k\Z^2/2kS(\Z^2)}\!\!\!\! \bm{e}\bigg(\frac{1}{4k} {\trans{\mu}} S^{-1} \mu\bigg)\!\!\!\!
			\sum_{\gamma = \trans{(\alpha, \beta)} \in (2S)^{-1}(\Z^2) \cap [0, k)^2}\!\!\!\! \veps(\gamma)
			\bm{e}\bigg(\frac{1}{k} {\trans{\gamma}} \mu\bigg) B_1(\alpha) B_1(\beta)
\\ \qquad
			{}= 0,
		\end{gather*}
		where $ B_1(x) := x - 1/2 $ is the first Bernoulli polynomial.
		\item[$(v)$] 
		Under the assumption in $(ii)$, we have
		\begin{gather*}
			\sum_{\mu \in k\Z \oplus \Z/2kS(\Z^2)} \bm{e}\bigg(\frac{1}{4k} {\trans{\mu}} S^{-1} \mu\bigg)
			\sum_{\gamma = \trans{(\alpha, \beta)} \in (2S)^{-1}(\Z^2) \cap [0, k)^2} \veps(\gamma)
			\bm{e}\bigg(\frac{1}{k} {\trans{\gamma}} \mu\bigg) \alpha \beta
\\ \qquad
			{}=
			\sum_{\mu \in \Z \oplus k\Z/2kS(\Z^2)} \bm{e}\bigg(\frac{1}{4k} {\trans{\mu}} S^{-1} \mu\bigg)
			\sum_{\gamma = \trans{(\alpha, \beta)} \in (2S)^{-1}(\Z^2) \cap [0, k)^2} \veps(\gamma)
			\bm{e}\bigg(\frac{1}{k} {\trans{\gamma}} \mu\bigg) \alpha \beta
\\ \qquad
			{}=
			\sum_{\mu \in k\Z^2/2kS(\Z^2)} \bm{e}\bigg(\frac{1}{4k} {\trans{\mu}} S^{-1} \mu\bigg)
			\sum_{\gamma = \trans{(\alpha, \beta)} \in (2S)^{-1}(\Z^2) \cap [0, k)^2} \veps(\gamma)
			\bm{e}\bigg(\frac{1}{k} {\trans{\gamma}} \mu\bigg) \alpha \beta
\\ \qquad
			{}= 0.
	\end{gather*}
	\end{enumerate}
\end{prop}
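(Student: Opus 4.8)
The plan is to reduce all five parts to part $(i)$, which is the genuinely new input, and then to obtain $(ii)$--$(v)$ by factoring the quadratic form and exploiting the multiplicative structure $\veps(\trans{(\alpha,\beta)}) = \chi(\alpha)\psi(\beta)$. For part $(i)$, the idea is to use the diagonalization $SA = \diag(M,N)$ from Remark~\ref{rem:S_inv_factor}, so that on the lattice $(2S)^{-1}(\Z^2) = \frac{1}{2M}\Z \oplus \frac{1}{2N}\Z$ the quadratic form $Q(\gamma) = \trans{\gamma}S\gamma$ decomposes after the change of variables $\gamma = (2S)^{-1}\delta$ with $\delta \in \Z^2$. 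Writing $Q(\gamma) = \frac14 \trans{\delta} S^{-1}\delta$ and using $S^{-1} = A\,\diag(1/M,1/N)\trans{A}/\det$-type identities, one splits the sum over $\gamma \bmod k\Z^2$ into a product-like sum. The crucial point is Assumption~\ref{assump:Gauss_sum}$(iii)$: the residues $M\alpha^2$, $N\beta^2$, $2MN\alpha\beta$ modulo $\Z$ are \emph{constant} on the support of $\veps$, so $\bm{e}\bigl(\tfrac{h}{k}Q(\gamma)\bigr)$ splits off a factor depending only on the fixed residue class, leaving the remaining dependence on $\gamma$ to run over a full period; then Assumption~\ref{assump:Gauss_sum}$(i)$ (the sum of $\veps$ vanishes) forces the total to be zero. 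Concretely I would write $\gamma = \gamma_0 + \trans{(m/(2M), n/(2N))}$ ... no: better, $\gamma = \gamma_0 + \nu$ with $\nu$ ranging over a transversal of $(2S)^{-1}(\Z^2)/k\Z^2$ lying in $\Z$-translates, expand $Q(\gamma_0+\nu) = Q(\gamma_0) + 2\trans{\gamma_0}S\nu + Q(\nu)$, and observe that $2\trans{\gamma_0}S\nu \in \Z$ (since $2S\gamma_0 \in \Z^2$) while $Q(\nu) \in \Z$; what survives is $\bm{e}(\tfrac hk Q(\gamma_0))$ times a constant, and summing over the $\veps$-weighted coset representatives $\gamma_0$ modulo $\Z^2$ gives zero by (i) of the assumption.

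For part $(ii)$, I would use the factorization $A = Ma$, etc., together with the change of variables diagonalizing $S$: after applying $A \in \SL_2(\Z)$, the cross term $2MNmn$ is removed at the cost of separating $\alpha$ and $\beta$ into independent summation variables, so the sum becomes a product of a sum over $\alpha \in \frac{1}{2M}\Z/\Z$ weighted by $\chi(\alpha)$ (times an $\alpha$-dependent Gauss factor and $C(\alpha)$) and an analogous sum over $\beta$. One of the two factors is exactly $\sum_{\alpha} \chi(\alpha)(\text{phase}) = 0$ by part $(i)$ applied in one variable, or more directly by the hypothesis $\sum \chi = 0$ combined with the constancy of the phase from Assumption~\ref{assump:Gauss_sum}$(iii)$. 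The presence of the extra factor $C(\alpha)$ (resp. $C(\beta)$) does not matter because it lives on the \emph{same} variable whose phase is constant, so it just reweights the terms of a sum that already vanishes termwise-by-coset; the other variable's sum is what forces zero. Parts $(iii)$ and $(iv)$ are handled the same way: the inner sum over $\gamma$, for each fixed $\mu = \trans{(\mu_1,\mu_2)}$, factors as $\bigl(\sum_\alpha \chi(\alpha)\bm{e}(\tfrac1k \alpha\mu_1)B(\alpha)\bigr)\bigl(\sum_\beta \psi(\beta)\bm{e}(\tfrac1k\beta\mu_2)C(\beta)\bigr)$ after diagonalizing, and whenever $\mu_2 \in k\Z$ the $\beta$-phase is trivial so the $\beta$-sum is $\sum_\beta \psi(\beta)C(\beta) = 0$ (or its $\widetilde{B}$-analogue in $(iii)$, which is exactly the hypothesis imposed on $B$), while when $\mu_1 \in k\Z$ symmetrically the $\alpha$-sum vanishes. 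Since in each of the three sums over $\mu$ in $(iii)$, $(iv)$, $(v)$ at least one of $\mu_1,\mu_2$ is constrained to $k\Z$ (for $k\Z \oplus \Z$ it is $\mu_1$; for $\Z \oplus k\Z$ it is $\mu_2$; for $k\Z^2$ it is both), every term dies. Part $(v)$ is the special case $B(x)=C(x)=x$ of the mechanism in $(iii)$--$(iv)$, using that $\sum_{\alpha \in \frac{1}{2M}\Z/\Z}\chi(\alpha)\alpha$ and $\sum_{\beta}\psi(\beta)\beta$ — or rather the relevant shifted sums over $[0,k)$ — vanish, which again follows from the symmetry $\chi(\alpha) = \chi(-\alpha)$ (equivalently $\chi(1-\alpha) = \chi(\alpha)$, a consequence of the explicit definition of $\chi^{(w,w')}$) pairing $\alpha$ with $1-\alpha$ against $\bm{e}(\tfrac1k \alpha \mu_1)$ when $\mu_1 \in k\Z$.

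The main obstacle I anticipate is bookkeeping in part $(i)$: correctly identifying the transversal of $(2S)^{-1}(\Z^2)/k\Z^2$, verifying that $2\trans{\gamma_0}S\nu \in \Z$ and $Q(\nu) \in \Z$ for $\nu$ in that transversal (this needs $\nu$ chosen in $\Z^2$, or at least $S\nu \in \frac12\Z^2$ with the right parity), and checking that after stripping the constant phase one really recovers a sum over a \emph{complete} set of $\veps$-coset representatives modulo $\Z^2$ rather than some proper sub- or multi-set — the integers $M, N, k$ interact in the index computations and one must be careful that $\gcd$'s do not spoil the reduction. A secondary subtlety is in $(iii)$: the hypothesis is phrased in terms of $\widetilde{B}(\alpha) = \sum_{0\le m<k} B(\alpha+m)$, so after reducing the $\alpha$-sum modulo $k\Z$ rather than $\Z$ one must fold the $k$ residues together, which is precisely what produces $\widetilde{B}$; getting the order of summation right (first over the $\Z/k\Z$ fibre, then over $\frac{1}{2M}\Z/\Z$) is where care is needed. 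Everything else is routine Gauss-sum manipulation once the factorization via $A \in \SL_2(\Z)$ and the constancy from Assumption~\ref{assump:Gauss_sum}$(iii)$ are in place.
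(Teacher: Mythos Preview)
Your sketch has a genuine gap that recurs throughout. In part~$(i)$ you expand $Q(\gamma_0+\nu)=Q(\gamma_0)+2\trans{\gamma_0}S\nu+Q(\nu)$ and, noting that the last two summands lie in $\Z$, conclude that ``what survives is $\bm{e}(\tfrac{h}{k}Q(\gamma_0))$ times a constant.'' But lying in $\Z$ does not trivialise $\bm{e}(\tfrac{h}{k}\,\cdot\,)$: the inner sum $\sum_{\nu\in\Z^2/k\Z^2}\bm{e}\bigl(\tfrac{h}{k}(2\trans{\gamma_0}S\nu+Q(\nu))\bigr)$ is a Gauss sum whose linear coefficient $2S\gamma_0\bmod k\Z^2$ genuinely depends on~$\gamma_0$, and Assumption~\ref{assump:Gauss_sum}$(iii)$ only fixes $Q(\gamma_0)$ modulo~$\Z$, not modulo~$k\Z$, so neither factor is constant on the support of~$\veps$. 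The paper closes this with a dichotomy you never mention: if $\gcd(M,k)>1$ or $\gcd(N,k)>1$, the inner Gauss sum vanishes for every relevant~$\gamma_0$ because its linear coefficient fails the divisibility test of Lemma~\ref{lem:Gauss_sum_fundamental}; if $\gcd(M,k)=\gcd(N,k)=1$, one passes to $\gamma^*:=\trans{(MM^*\alpha,NN^*\beta)}\in\tfrac12\Z^2$ with $MM^*\equiv NN^*\equiv1\bmod k$, and only then does Assumption~\ref{assump:Gauss_sum}$(iii)$ force the full inner sum $\sum_\nu\bm{e}(\tfrac{h}{k}Q(\nu+\gamma))$ to be independent of~$\gamma$. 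The same case split drives part~$(ii)$ via a one-variable analogue (Lemma~\ref{lem:Gauss_sum_Bernoulli_single}). Note also that your diagonalisation idea does \emph{not} remove the cross term, since $\trans{A}SA=\bigl(\begin{smallmatrix}Mc&-MNb\\-MNb&Na\end{smallmatrix}\bigr)$; the paper instead freezes $\beta$ and absorbs the cross term into the linear coefficient of a one-variable quadratic in~$\alpha$.

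For $(iii)$--$(v)$ your termwise-in-$\mu$ argument also fails. With $\mu_1=km\in k\Z$, folding $\alpha\in\tfrac{1}{2M}\Z/k\Z$ into $\alpha_0\in\tfrac{1}{2M}\Z/\Z$ produces the $\alpha$-factor $\sum_{\alpha_0}\chi(\alpha_0)\,\bm{e}(\alpha_0 m)\,\widetilde{B}(\alpha_0)$; here $\bm{e}(\alpha_0 m)$ is a nontrivial $2M$-th root of unity for generic~$m$, so the hypothesis $\sum_{\alpha_0}\chi(\alpha_0)\widetilde{B}(\alpha_0)=0$ does not apply. The paper does not claim the inner $\gamma$-sum vanishes for each fixed~$\mu$; it keeps the sum over~$m$, applies the reciprocity formula (Proposition~\ref{prop:reciprocity}) to $\sum_{m\in\Z/2M\Z}\bm{e}\bigl(\tfrac{kcm^2}{4M}-\tfrac{bmn}{2}+\alpha_0 m\bigr)$, and only after this transform does the one-variable vanishing lemma (again with its own $\gcd(M,k)$ dichotomy) finish the job. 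In short, the two missing ingredients in your plan are the $\gcd$ case split and the use of reciprocity to tame the residual phase; without them none of $(i)$--$(iii)$ goes through, and $(iv)$,~$(v)$ rest on~$(iii)$.
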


Our proof of Proposition \ref{prop:Gauss_sum} is based on the proof of \cite[Theorem 4.1]{BMM}.
However, our assumptions are slightly different from it in \cite[Theorem 4.1]{BMM}.


\subsection[A proof of Proposition 4.2(i)]
{A proof of Proposition \ref{prop:Gauss_sum}$\boldsymbol{(i)}$}


Firstly, we prove Proposition \ref{prop:Gauss_sum}$(i)$.
To begin with, we need the result of the quadratic Gauss sum.
For integers $ a, b, c \in \Z $ with $ c > 0 $, we define the \textit{quadratic Gauss sum}
\begin{gather*}
G(a, b, c) := \sum_{n \in \Z/c\Z} \bm{e} \bigg(\frac{an^2 + bn}{c}\bigg).
\end{gather*}
The following result is elementary.

\begin{lem} \label{lem:Gauss_sum_fundamental}
	If $ \gcd(a, c) \nmid b $, then $ G(a, b, c) = 0 $.
\end{lem}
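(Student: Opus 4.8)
\textbf{Proof plan for Lemma \ref{lem:Gauss_sum_fundamental}.}

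The plan is to exploit the periodicity structure of the summand under the substitution $n \mapsto n + c/d$, where $d := \gcd(a,c)$. First I would write $d = \gcd(a,c)$ and suppose $d \nmid b$. The key observation is that the quadratic form $an^2 + bn$ modulo $c$ is invariant, up to an additive constant coming from the linear term, under translating $n$ by $c/d$: indeed $a(n + c/d)^2 + b(n + c/d) = an^2 + bn + 2anc/d + ac^2/d^2 + bc/d$. Since $d \mid a$, the term $2anc/d$ is an integer multiple of $c$ (because $d \mid a$ gives $an/d \in \Z$, times $2c$), and similarly $ac^2/d^2 = (a/d)(c/d)c$ is an integer multiple of $c$ (using $d \mid a$ again). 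Hence modulo $c$,
\[
a(n + c/d)^2 + b(n + c/d) \equiv an^2 + bn + \frac{bc}{d} \pmod{c}.
\]

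Therefore, if I group the sum over $n \in \Z/c\Z$ into cosets of the subgroup generated by $c/d$ (which has order $d$), the inner sum over each coset is a geometric-type sum: fixing a representative $n_0$, the $d$ terms obtained by adding multiples $j \cdot c/d$ for $j = 0, \dots, d-1$ contribute
\[
\bm{e}\!\left(\frac{an_0^2 + bn_0}{c}\right) \sum_{j=0}^{d-1} \bm{e}\!\left(\frac{jb}{d}\right).
\]
The geometric sum $\sum_{j=0}^{d-1} \bm{e}(jb/d)$ equals $0$ precisely when $d \nmid b$, which is our hypothesis. Summing over the $c/d$ distinct cosets, every inner contribution vanishes, so $G(a,b,c) = 0$.

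There is no real obstacle here; the only point requiring a little care is checking that the two ``error'' terms $2anc/d$ and $ac^2/d^2$ in the expansion of $a(n+c/d)^2$ genuinely vanish modulo $c$, which follows immediately from $d \mid a$. One should also confirm that $n \mapsto n + c/d$ is a well-defined involution-free group action on $\Z/c\Z$ partitioning it into $c/d$ orbits each of size $d$, so that the regrouping of the sum is legitimate — this is routine since $c/d$ has additive order exactly $d$ in $\Z/c\Z$.
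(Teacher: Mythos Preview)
Your proof is correct and is essentially the same as the paper's: the paper writes $n = l + c'm$ with $c' = c/\gcd(a,c)$, $0 \le l < c'$, $0 \le m < \gcd(a,c)$, and factors the sum as $\sum_{l} \bm{e}\big((a l^2 + bl)/c\big) \sum_{m} \bm{e}\big(bm/\gcd(a,c)\big)$, observing that the inner geometric sum vanishes under the hypothesis. Your coset-decomposition phrasing (with $n_0$ playing the role of $l$ and $j$ that of $m$) is the same argument in slightly different language.
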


\begin{proof}
	Let $g := \gcd(a, c)$, $a' := a/g$, and $c' := c/g$.
	We have
	\begin{align*}
G(a, b, c)
&= \sum_{0 \le l < c'} \sum_{0 \le m < g} \bm{e} \bigg( \frac{ga'(l+c'm)^2 + b(l+c'm)}{gc'} \bigg)
\\
&= \sum_{0 \le l < c'} \bm{e} \bigg( \frac{ga' l^2 + bl}{gc'} \bigg) \sum_{0 \le m < g} \bm{e}
\bigg(\frac{bm}{g} \bigg).
	\end{align*}
	By assumption, the last sum vanishes.
\end{proof}

Proposition \ref{prop:Gauss_sum}$(i)$ follows from the following two lemmas.

\begin{lem} \label{lem:Gauss_sum_zero_general}
	If $ \gcd(M, k) > 1 $ or $ \gcd(N, k) > 1 $, then for $ \gamma = \trans{(\alpha, \beta)} \in \Q^2 $ we have
	\begin{gather*}
	\sum_{\mu \in \Z^2/k\Z^2} \bm{e}\bigg( \frac{h}{k} Q(\mu + \gamma) \bigg)
	= 0.
	\end{gather*}
\end{lem}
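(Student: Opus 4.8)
\textbf{Proof proposal for Lemma \ref{lem:Gauss_sum_zero_general}.}
The plan is to reduce the two-dimensional sum over $\mu \in \Z^2/k\Z^2$ to a product of one-dimensional quadratic Gauss sums and then apply Lemma \ref{lem:Gauss_sum_fundamental}. Writing $\mu = \trans{(m, n)}$ and using $Q(\mu + \gamma) = Ma(m+\alpha)^2 - 2MNb(m+\alpha)(n+\beta) + Nc(n+\beta)^2$ (or with $+2MNb$, depending on the sign convention in force), the first step is to expand and separate the variable $m$. Since the cross term $2MNb(m+\alpha)(n+\beta)$ is linear in $m$ once $n$ is fixed, for each fixed $n$ the inner sum over $m \in \Z/k\Z$ has the shape $\sum_{m} \bm{e}\big(\frac{h}{k}(Ma\, m^2 + (\text{linear in } m))\big)$ up to an overall constant depending on $n, \gamma$. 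Clearing the denominators $2M, 2N$ appearing in $\alpha, \beta$ (so that all exponents become rationals with a common denominator that is a fixed multiple of $k$), this inner sum is, after a change of summation variable, a constant multiple of a quadratic Gauss sum $G(a', b', c')$ with leading coefficient divisible by $hMa$ and modulus a multiple of $k$.

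The key point is then the hypothesis $\gcd(M, k) > 1$ (the case $\gcd(N, k) > 1$ is symmetric, handled by summing over $n$ first instead of $m$). Because $\gcd(h, k) = 1$, the leading coefficient of the relevant quadratic Gauss sum in the variable $m$ shares the factor $\gcd(M,k) > 1$ with the modulus, while the linear coefficient — which comes from the cross term $-2MNb(m+\alpha)(n+\beta)$ together with the $2Ma\alpha m$ term — needs to be checked to \emph{not} be divisible by that common factor, so that Lemma \ref{lem:Gauss_sum_fundamental} forces the inner sum to vanish. Concretely, after writing everything over the common denominator, one isolates $g := \gcd(\text{leading coeff}, \text{modulus})$ and verifies $g \nmid (\text{linear coeff})$; the structural reason is that $\alpha$ has exact denominator a divisor of $2M$ and the $2Ma\alpha m$ contribution to the linear term survives modulo the offending prime dividing $\gcd(M,k)$, whereas $a$ and $c$ are coprime to $M N b$ via $ac - MNb^2 = 1$. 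Once the inner sum vanishes for every fixed $n$, the whole double sum is zero.

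I expect the main obstacle to be the bookkeeping in the second step: one must choose the common denominator carefully and track exactly which term contributes to the linear coefficient modulo the common factor $g$, because a priori the $2MNb(n+\beta)m$ part could conspire with the $2Ma\alpha m$ part to make the total linear coefficient divisible by $g$. Resolving this will use the coprimality relation $ac - MNb^2 = 1$ (so that $\gcd(a, M) = \gcd(c, N) = 1$, and $M, N$ are coprime to $b$ modulo suitable primes) to rule out such cancellation for at least one choice of which variable to sum first. If cancellation cannot be ruled out for the $m$-sum, one falls back to the $n$-sum under $\gcd(N,k)>1$; the two cases together cover the hypothesis, and in the overlap either works. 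After that, the remaining manipulations — the variable substitution turning the inner sum into $G(a',b',c')$ and the invocation of Lemma \ref{lem:Gauss_sum_fundamental} — are routine.
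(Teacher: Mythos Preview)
Your approach matches the paper's: for the case $\gcd(M,k)>1$, the paper fixes $n$ and writes the inner $m$-sum as $G(hMa,\, h(ar + Mb(s+2Nn)),\, k)$ with $r := 2M\alpha$, $s := 2N\beta$, then applies Lemma~\ref{lem:Gauss_sum_fundamental} after checking that $g' := \gcd(M,k)$ does not divide the linear coefficient. The obstacle you anticipate does not arise, because the cross-term contribution $hMb(s+2Nn)$ is visibly a multiple of $M$ (hence of $g'$), so modulo $g'$ the linear coefficient reduces to $har$, which is a unit thanks to $\gcd(h,k)=1$, $\gcd(a,M)=1$ (from $ac-MNb^2=1$), and $\gcd(r,2M)=1$ --- the last condition requiring that $\alpha$ have denominator \emph{exactly} $2M$ (Assumption~\ref{assump:Gauss_sum}(ii)), not merely a divisor of $2M$ as you wrote.
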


\begin{lem} \label{lem:Gauss_sum_indep_general}
	If $ \gcd(M, k) = \gcd(N, k) = 1 $, then
	\begin{gather*}
	\sum_{\mu \in \Z^2/k\Z^2} \bm{e}\bigg( \frac{h}{k} Q(\mu + \gamma) \bigg)
	\end{gather*}
	is independent of $ \gamma \in (2S)^{-1}\big(\Z^2\big) $ such that $ \veps(\gamma) \neq 0 $.
\end{lem}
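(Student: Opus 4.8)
The plan is to reduce the two-variable Gauss sum over $\mu \in \Z^2/k\Z^2$ to the product of two one-variable quadratic Gauss sums by diagonalizing the quadratic form $Q$, then to track how the shift $\gamma$ enters each factor and show that, under the coprimality hypotheses and Assumption \ref{assump:Gauss_sum}(ii)--(iii), the dependence on $\gamma$ disappears. First I would use the matrix $A \in \SL_2(\Z)$ of Remark \ref{rem:S_inv_factor}, which satisfies $SA = \operatorname{diag}(M, N)$, hence $\trans{A} S A = \operatorname{diag}(M c - M N b^2 \cdot \text{(correction)}, \dots)$; more precisely $Q(A\trans{(x,y)}) = \trans{(x,y)}\trans{A}SA\trans{(x,y)}$ diagonalizes (up to the explicit entries coming from $\trans{A}SA$) into a sum $M a' x^2 + N c' y^2$ of two independent squares after the $\SL_2(\Z)$-change of variables $\mu \mapsto A\mu$, which is a bijection of $\Z^2/k\Z^2$. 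Writing $A^{-1}\gamma = \trans{(\alpha', \beta')}$, the sum becomes
\begin{gather*}
\sum_{\mu \in \Z^2/k\Z^2} \bm{e}\bigg( \frac{h}{k} Q(\mu + \gamma) \bigg)
= \sum_{x \in \Z/k\Z} \bm{e}\bigg( \frac{h}{k} \big(\text{quadratic in } x + \alpha'\big)\bigg)
\cdot \sum_{y \in \Z/k\Z} \bm{e}\bigg( \frac{h}{k} \big(\text{quadratic in } y + \beta'\big)\bigg),
\end{gather*}
so it suffices to analyze each factor separately.

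For a single factor, say the $x$-factor, I would expand $M a'(x + \alpha')^2 = M a' x^2 + 2 M a' \alpha' x + M a' \alpha'^2$ and complete the square. Since $\gcd(M, k) = 1$ and $\gcd(a', k)$ divides $\gcd(\det S, k) = \gcd(1,k) = 1$ (using $\det S = MN$ and $ac - MNb^2 = 1$, one checks $a'$ is a unit mod $k$), the coefficient $M a'$ of $x^2$ is invertible mod $k$ (or mod $2k$, handling the factor of $2$ carefully via Lemma \ref{lem:Gauss_sum_fundamental}); completing the square sends the linear term $2 M a' \alpha' x$ into a shift of the summation variable, which changes nothing since we sum over the full group $\Z/k\Z$. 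What remains is a constant $\bm{e}\big(\tfrac{h}{k} M a' \alpha'^2 - (\text{completed-square constant})\big)$ times a pure quadratic Gauss sum $G(h M a', \cdot, k)$ that is genuinely independent of $\gamma$. The surviving $\gamma$-dependence is therefore confined to phase factors of the form $\bm{e}(\tfrac{h}{k} \cdot (\text{linear combination of } M\alpha^2, N\beta^2, 2MN\alpha\beta, M\alpha, N\beta))$; here is where Assumption \ref{assump:Gauss_sum}(iii) enters — all these quantities are fixed modulo $\Z$ across all $\gamma$ with $\veps(\gamma) \neq 0$, so the phase is the same for every such $\gamma$, and the whole sum is independent of the choice of $\gamma$.

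The main obstacle I anticipate is bookkeeping around the factor of $2$: the cross term $2MNb$ in $S$ and the passage between $(2S)^{-1}(\Z^2)$ and $S^{-1}(\Z^2)$ mean that after diagonalization the relevant modulus may be $2k$ rather than $k$, and the linear terms $2 M a' \alpha' x$ need not vanish on summation modulo the coarser lattice. I would handle this by splitting into the cases $k$ odd and $k$ even (or by working modulo $2k$ throughout and carefully applying Lemma \ref{lem:Gauss_sum_fundamental} to see which residues survive), verifying in each case that the $x$-variable still ranges over a complete residue system so that the linear shift is harmless. A secondary point requiring care is confirming that $A^{-1}\gamma$ still lies in the lattice on which Assumption \ref{assump:Gauss_sum}(iii) gives control — but since $A \in \SL_2(\Z)$ and the assumption is about the images of $M\alpha, N\beta, M\alpha^2, N\beta^2, 2MN\alpha\beta$ in $\Z$, and $Q$ itself is $\SL_2(\Z)$-equivariant in the appropriate sense, these quantities transform predictably, and their constancy modulo $\Z$ is preserved. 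Once these two technical matters are dispatched, the independence statement follows immediately from the product decomposition above.
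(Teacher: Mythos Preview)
Your diagonalization step fails. With $A = \pmat{c & -Nb \\ -Mb & a}$ as in Remark~\ref{rem:S_inv_factor} one indeed has $SA = \operatorname{diag}(M,N)$, but
\[
\trans{A}\,S\,A \;=\; \trans{A}\operatorname{diag}(M,N) \;=\; \pmat{Mc & -MNb \\ -MNb & Na},
\]
which still carries the off-diagonal entry $-MNb$. Hence $Q(A\nu + \gamma)$ is \emph{not} a sum of two independent one-variable quadratics, and the claimed product decomposition of the two-variable Gauss sum is simply false (no $\SL_2(\Z)$-congruence diagonalizes $S$ in general). Your fallback of completing the square over $\Z/k\Z$ is also in trouble: it needs the leading coefficient $Ma$ (or $Mc$) to be a unit modulo $k$, and while $\gcd(M,k)=1$ by hypothesis, nothing in the setup forces $\gcd(a,k)=1$ or $\gcd(c,k)=1$. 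Your justification that ``$a'$ is a unit mod $k$ since $\gcd(\det S,k)=\gcd(1,k)=1$'' is mistaken, because $\det S = MN$, not $1$.

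The paper's argument avoids diagonalization entirely. Using $\gcd(M,k)=\gcd(N,k)=1$, pick $M^*,N^*\in\Z$ with $MM^*\equiv NN^*\equiv 1\pmod k$ and set $\gamma^{*}:=\trans{(MM^{*}\alpha,\,NN^{*}\beta)}\in\tfrac{1}{2}\Z^{2}$. A direct check gives $2S(\gamma-\gamma^{*})\in k\Z^{2}$, so $Q(\mu+\gamma)\equiv Q(\mu+\gamma^{*})+\bigl(Q(\gamma)-Q(\gamma^{*})\bigr)\pmod{k}$ for every $\mu\in\Z^{2}$; the sum therefore equals the phase $\bm{e}\bigl(\tfrac{h}{k}(Q(\gamma)-Q(\gamma^{*}))\bigr)$ times $\sum_{\mu}\bm{e}\bigl(\tfrac{h}{k}Q(\mu+\gamma^{*})\bigr)$. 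Assumption~\ref{assump:Gauss_sum}$(iii)$ now does two things: it makes $Q(\gamma)-Q(\gamma^{*})\pmod{k}$ constant across all $\gamma$ with $\veps(\gamma)\neq 0$, and (via the constancy of $M\alpha,N\beta\pmod{\Z}$) it pins $\gamma^{*}\pmod{\Z^{2}}$ to a fixed $\delta_{0}\in\tfrac{1}{2}\Z^{2}/\Z^{2}$, so the remaining sum is $\sum_{\mu}\bm{e}\bigl(\tfrac{h}{k}Q(\mu+\delta_{0})\bigr)$, visibly independent of~$\gamma$.
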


\begin{proof}[Proof of Lemma $ \ref{lem:Gauss_sum_zero_general} $]
	We give a proof for the case when $ \gcd(M, k) > 1 $.
	Another case is similarly handled.
	We have
	\begin{align*}
		\sum_{\mu \in \Z^2/k\Z^2} \bm{e}\bigg( \frac{h}{k} Q(\mu + \gamma) \bigg)
		&= \sum_{\mu = \trans{(m, n)} \in \Z^2/k\Z^2} \bm{e}\bigg( \frac{h}{k} \big( Q(\mu) + 2\trans{\mu} S \gamma + Q(\gamma) \big) \bigg) \\
		&= \bm{e}\bigg(\frac{h}{k} Q(\gamma)\bigg)
		\sum_{n \in \Z/k\Z} \bm{e}\bigg( \frac{h}{k} \big( N c n^2 + 2Nn(Mb \alpha + c \beta) \big) \bigg) G_n,
	\end{align*}
	where $ r := 2M \alpha, s := 2N \beta \in \Z $, and $ G_n := G(hMa, h(ar + Mb(s + 2Nn)), k) $.
	Let $ g := \gcd(hMa, k) $.
	We have $ G_n = 0 $ if $ g \nmid h(ar + Mb(s + 2Nn)) $ by Lemma \ref{lem:Gauss_sum_fundamental}.
	Since $ g' := \gcd(M, k) $ divides $ g $, it suffices to show $ g \nmid h(ar + Mb(s + 2Nn)) $.
	
	We have $ \gcd(g', a) = \gcd(g', r) = \gcd(g', h) = 1 $ since $ ac - MN b^2 = 1 $ and $ \gcd(r, 2M) = \gcd(h, k) = 1 $ respectively.
	Thus, it holds $ \gcd(g, har) = 1 $.
	Therefore, we obtain
	\begin{gather*}
	h(ar + Mb(s + 2Nn)) \equiv ar \not\equiv 0 \pmod{g'}.\tag*{\qed}
\end{gather*}
\renewcommand{\qed}{}
\end{proof}

\begin{proof}[Proof of Lemma $ \ref{lem:Gauss_sum_indep_general} $]
	Since $ \gcd(M, k) = \gcd(N, k) = 1 $, there exist integers $ M^*, N^* \in \Z $ such that $ MM^* \equiv NN^* \equiv 1 \pmod k $.
	For $ \gamma \in (2S)^{-1}\big(\Z^2\big) $ such that $ \veps(\gamma) \neq 0 $,
	let $ \gamma^* := \trans{(MM^* \alpha, NN^* \beta)} \in \frac{1}{2} \Z^2 $.
	Since
	\begin{gather*}
	2 S (\gamma - \gamma^*)
	= 2 \pmat{Ma & MNb \\ MNb & Nc} \pmat{(1 - MM^*) \alpha \\ (1 - NN^*) \beta}
	\in k\Z^2,
	\end{gather*}
	we have
	\begin{gather*}
	Q(\mu + \gamma) - Q(\mu + \gamma^*)
	= Q(\gamma) - Q(\gamma^*) + 2 \trans{\mu} S (\gamma - \gamma^*)
	\equiv Q(\gamma) - Q(\gamma^*) \pmod{k\Z}.
	\end{gather*}
	Thus, we obtain
	\begin{align*}
		\sum_{\mu \in \Z^2/k\Z^2} \bm{e}\bigg( \frac{h}{k} Q(\mu + \gamma) \bigg)
		&= \sum_{\mu \in \Z^2/k\Z^2} \bm{e}\bigg( \frac{h}{k} ( Q(\mu + \gamma^*) + Q(\gamma) - Q(\gamma^*) ) \bigg)
\\
		&= \bm{e}\bigg( \frac{h}{k} ( Q(\gamma) - Q(\gamma^*)) \bigg)
		\sum_{\mu \in \Z^2/k\Z^2} \bm{e}\bigg( \frac{h}{k} Q(\mu) \bigg).
	\end{align*}
	By Assumption \ref{assump:Gauss_sum}$(iii)$, $ Q(\gamma) - Q(\gamma^*)\! \pmod{k\Z} $ is independent of $ \gamma $.
	Since $ \trans{(M\alpha, N\beta)} \!\!\pmod{\Z^2} $ is also independent of $ \gamma $ by Assumption \ref{assump:Gauss_sum}$(iii)$, there exists $ \delta_0 \in \frac{1}{2} \Z^2/\Z^2 $ which is independent of $ \gamma $ and $ \gamma^* \equiv \delta_0 \pmod{\Z^2} $.
	Thus,
	\begin{gather*}
	\sum_{\mu \in \Z^2/k\Z^2} \bm{e}\bigg( \frac{h}{k} Q(\mu + \gamma^*) \bigg)
	= \sum_{\mu \in \Z^2/k\Z^2} \bm{e}\bigg( \frac{h}{k} Q(\mu + \delta_0) \bigg)	
	\end{gather*}
	is also independent of $ \gamma $.
\end{proof}

\subsection[A proof of Proposition 4.2(ii)]{A proof of Proposition \ref{prop:Gauss_sum}$\boldsymbol{(ii)}$} 

Secondly, we prove Proposition \ref{prop:Gauss_sum}$(ii)$.

\begin{proof}[Proof of Proposition \ref{prop:Gauss_sum}$\boldsymbol{(ii)}$]
	We only prove the second equality.
	By assumption, we have
	\begin{gather*}
		\sum_{\gamma = \trans{(\alpha, \beta)} \in (2S)^{-1}(\Z^2)/k\Z^2} \veps(\gamma)
		\bm{e}\bigg(\frac{h}{k} Q(\gamma)\bigg) C(\beta)
\\ \qquad
		{}= \sum_{\beta \in \frac{1}{2N}\Z/k\Z} \psi(\beta)
		\bm{e}\bigg( \frac{h}{k} c \beta^2 \bigg) C(\beta)
		\sum_{\alpha \in \frac{1}{2M}\Z/k\Z} \chi(\alpha)
		\bm{e}\bigg( \frac{h}{k} M Q_{2N\beta}^{} (\alpha) \bigg),
	\end{gather*}
	where $ Q_s(\alpha) := a \alpha^2 + bs \alpha $.
	Then the second sum in the right hand side vanishes by the following lemma.
\end{proof}

\begin{lem} \label{lem:Gauss_sum_Bernoulli_single}
	Under the same assumption as in Proposition $\ref{prop:Gauss_sum}(ii)$, for a quadratic form $ Q_0(\alpha)\allowbreak = a_0 \alpha^2 + b_0 \alpha $ such that $ a_0, b_0 \in \Z, \gcd(M, a_0) = 1 $ and a map $ \widetilde{B} \colon \frac{1}{2M}\Z/\Z \to \bbC $ such that
	\begin{gather*}
	\sum_{\alpha \in \frac{1}{2M}\Z/\Z} \chi(\alpha) \widetilde{B}(\alpha) = 0,
	\end{gather*}
	it holds
	\begin{gather*}
	\sum_{\alpha \in \frac{1}{2M}\Z/k\Z} \chi(\alpha) \bm{e}\bigg( \frac{h}{k} M Q_0 (\alpha) \bigg) \widetilde{B}(\alpha) = 0.
	\end{gather*}
\end{lem}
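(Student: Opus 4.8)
The plan is to collapse the sum to a single quadratic Gauss sum over $\Z/k\Z$ lying above the $2M$-torsion coordinate, split on $\gcd(M,k)$, and then invoke the hypothesis $\sum_{\alpha}\chi(\alpha)\widetilde B(\alpha)=0$. Concretely, I would first partition $\frac{1}{2M}\Z/k\Z$ by the residue $r:=(2M\alpha)\bmod 2M\in\Z/2M\Z$; for fixed $r$ the element $\alpha$ runs through $\{\frac{r}{2M}+m:m\in\Z/k\Z\}$, and since $\chi$ and $\widetilde B$ factor through reduction modulo $\Z$, they depend on $\alpha$ only through $r$. Using $MQ_0(\frac{r}{2M}+m)=MQ_0(\frac{r}{2M})+(a_0r+Mb_0)m+Ma_0m^2$ (legitimate under $\bm{e}(\cdot)$ because $a_0r+Mb_0,Ma_0\in\Z$ make the summand $k$-periodic in $m$), the sum in the lemma becomes $\sum_{r\in\Z/2M\Z}\chi(\frac{r}{2M})\widetilde B(\frac{r}{2M})T_r$ with
\begin{gather*}
T_r:=\sum_{m\in\Z/k\Z}\bm{e}\Big(\tfrac{h}{k}MQ_0\big(\tfrac{r}{2M}+m\big)\Big)=\bm{e}\Big(\tfrac{h}{k}MQ_0\big(\tfrac{r}{2M}\big)\Big)\,G\big(hMa_0,\,h(a_0r+Mb_0),\,k\big).
\end{gather*}
Hence it is enough to show that $T_r$ takes the same value for every $r$ with $\chi(\frac{r}{2M})\neq0$: then one factors that common value out of the sum and applies the hypothesis.

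In the case $\gcd(M,k)>1$ I expect $T_r=0$ for each such $r$. By Assumption \ref{assump:Gauss_sum}$(ii)$, $\gcd(r,2M)=1$; for a prime $p\mid\gcd(M,k)$ we have $p\mid\gcd(hMa_0,k)$, while $p\nmid a_0$ (as $\gcd(M,a_0)=1$), $p\nmid r$, so $p\nmid a_0r$, and $p\mid Mb_0$, whence $p\nmid a_0r+Mb_0$; since $p\nmid h$ this gives $\gcd(hMa_0,k)\nmid h(a_0r+Mb_0)$, so $G(hMa_0,h(a_0r+Mb_0),k)=0$ by Lemma \ref{lem:Gauss_sum_fundamental}. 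Thus the sum vanishes termwise.

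In the case $\gcd(M,k)=1$ I would fix $M^*\in\Z$ with $MM^*\equiv1\pmod k$, put $d:=(MM^*-1)/k\in\Z$, and for $\alpha=\frac{r}{2M}$ set $\alpha^*:=MM^*\alpha=\frac{M^*r}{2}$. Since $2Ma_0(\alpha-\alpha^*)=a_0r(1-MM^*)=-a_0rdk\in k\Z$, expanding $MQ_0(\alpha+m)-MQ_0(\alpha^*+m)$ shows $MQ_0(\alpha+m)\equiv MQ_0(\alpha^*+m)+(MQ_0(\alpha)-MQ_0(\alpha^*))\pmod{k\Z}$ for all $m\in\Z$, hence
\begin{gather*}
T_r=\bm{e}\Big(\tfrac{h}{k}\big(MQ_0(\alpha)-MQ_0(\alpha^*)\big)\Big)\sum_{m\in\Z/k\Z}\bm{e}\Big(\tfrac{h}{k}MQ_0(\alpha^*+m)\Big).
\end{gather*}
The last sum is shift-invariant in $\alpha^*$, so it depends only on $\alpha^*\bmod\Z$, i.e.\ on the parity of $M^*r$; and a short computation gives $\frac{h}{k}(MQ_0(\alpha)-MQ_0(\alpha^*))=-hd\big((1+MM^*)a_0(M\alpha^2)+b_0(M\alpha)\big)$, an integer combination of $M\alpha^2$ and $M\alpha$. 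By Assumption \ref{assump:Gauss_sum}$(iii)$, $M\alpha\bmod\Z$ and $M\alpha^2\bmod\Z$ are constant on the support of $\chi$; hence so are the parity of $r$, the value of the last sum, and the displayed phase modulo $\Z$. Therefore $T_r$ is constant on that support, and the whole sum equals $(\mathrm{const})\cdot\sum_{\alpha}\chi(\alpha)\widetilde B(\alpha)=0$.

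The only delicate part is the last case: $MQ_0(\alpha)$, $\alpha^*$ and their relatives are not integers, so the identities must be read under $\bm{e}(\cdot)$, checking that the substitution $\alpha\mapsto MM^*\alpha$ changes the relevant quantities by elements of $k\Z$ (which vanish after division by $k$) and that the residual phase is exactly the integer combination of $M\alpha$ and $M\alpha^2$ above, so that Assumption \ref{assump:Gauss_sum}$(iii)$ can be applied. Correctly tracking the half-integer and $\tfrac{1}{4M}$ denominators through this substitution is where the care is needed; the rest is bookkeeping with quadratic Gauss sums via Lemma \ref{lem:Gauss_sum_fundamental}.
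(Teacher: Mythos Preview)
Your proposal is correct and follows essentially the same route as the paper: decompose the sum over $\frac{1}{2M}\Z/k\Z$ into fibres over $\Z/2M\Z$, identify the inner sum as the quadratic Gauss sum $G(hMa_0,h(a_0r+Mb_0),k)$ times a phase, then split on $\gcd(M,k)$---using Lemma~\ref{lem:Gauss_sum_fundamental} to kill each $T_r$ when $\gcd(M,k)>1$, and the substitution $\alpha\mapsto MM^*\alpha$ together with Assumption~\ref{assump:Gauss_sum}$(iii)$ to show $T_r$ is constant on the support of $\chi$ when $\gcd(M,k)=1$. The paper packages these two cases as separate lemmas (Lemmas~\ref{lem:Gauss_sum_zero_Bernoulli} and~\ref{lem:Gauss_sum_indep_Bernoulli}), but the arguments are the same; your explicit computation of the phase $\frac{h}{k}(MQ_0(\alpha)-MQ_0(\alpha^*))$ as $-hd((1+MM^*)a_0(M\alpha^2)+b_0(M\alpha))$ is in fact slightly cleaner than the paper's version.
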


Lemma \ref{lem:Gauss_sum_Bernoulli_single} follows from the following two lemmas.

\begin{lem} \label{lem:Gauss_sum_zero_Bernoulli}
	If $ \gcd(M, k) > 1 $, then for $ \alpha \in \frac{1}{2M} \Z $ such that $ \chi(\alpha) \neq 0 $, we have
	\begin{gather*}
	\sum_{m \in \Z/k\Z} \bm{e}\bigg( \frac{h}{k} M Q_0 (m + \alpha) \bigg)
	= 0.
	\end{gather*}
\end{lem}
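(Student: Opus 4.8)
The plan is to pull the $m$-independent part of the exponent out of the sum and to recognize what remains as a quadratic Gauss sum, so that Lemma~\ref{lem:Gauss_sum_fundamental} applies. First, since $\chi(\alpha) \neq 0$, the factorization $\veps(\gamma) = \chi(\alpha)\psi(\beta)$ of Proposition~\ref{prop:Gauss_sum}$(ii)$ together with Assumption~\ref{assump:Gauss_sum}$(ii)$ shows that the irreducible denominator of $\alpha$ equals $2M$, so we may write $\alpha = r/(2M)$ with $\gcd(r, 2M) = 1$. Expanding
\begin{gather*}
M Q_0(m + \alpha) = M a_0 m^2 + (2M a_0 \alpha + M b_0) m + \big( M a_0 \alpha^2 + M b_0 \alpha \big),
\end{gather*}
and noting $2M a_0 \alpha = a_0 r \in \Z$ and $M b_0 \in \Z$, the linear coefficient $a_0 r + M b_0$ is an integer, so
\begin{gather*}
\sum_{m \in \Z/k\Z} \bm{e}\bigg( \frac{h}{k} M Q_0(m + \alpha) \bigg)
= \bm{e}\bigg( \frac{h}{k} \big( M a_0 \alpha^2 + M b_0 \alpha \big) \bigg)\, G\big( h M a_0,\, h(a_0 r + M b_0),\, k \big).
\end{gather*}

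Next I would invoke Lemma~\ref{lem:Gauss_sum_fundamental}: it suffices to show that $\gcd(h M a_0, k) \nmid h(a_0 r + M b_0)$. Put $g := \gcd(h M a_0, k)$ and $g' := \gcd(M, k)$. Since $g' \mid M \mid h M a_0$ and $g' \mid k$, we have $g' \mid g$, so it is enough to prove $g' \nmid h(a_0 r + M b_0)$. Modulo $g'$ the term $M b_0$ vanishes, hence $h(a_0 r + M b_0) \equiv h a_0 r \pmod{g'}$. Finally $\gcd(h, g') \mid \gcd(h, k) = 1$, $\gcd(a_0, g') \mid \gcd(a_0, M) = 1$ (by the hypothesis $\gcd(M, a_0) = 1$ of Lemma~\ref{lem:Gauss_sum_Bernoulli_single}), and $\gcd(r, g') \mid \gcd(r, 2M) = 1$, so $\gcd(h a_0 r, g') = 1$; since $g' = \gcd(M, k) > 1$ by hypothesis, we get $h a_0 r \not\equiv 0 \pmod{g'}$, whence $g' \nmid h(a_0 r + M b_0)$ and a fortiori $g \nmid h(a_0 r + M b_0)$. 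Thus the Gauss sum vanishes and the lemma follows.

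This runs entirely parallel to the proof of Lemma~\ref{lem:Gauss_sum_zero_general}, with $r = 2M\alpha$ here playing the role that $r = 2M\alpha$, $s = 2N\beta$ play there. There is no serious obstacle; the only point that genuinely needs the hypotheses is the verification that the linear coefficient of the Gauss sum is an integer and that the relevant greatest common divisors are small enough to force vanishing, which is exactly where the irreducible denominator of $\alpha$ being precisely $2M$ enters (both for $a_0 r \in \Z$ and for $\gcd(r, 2M) = 1$).
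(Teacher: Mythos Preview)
Your proof is correct and follows essentially the same route as the paper's: write $r := 2M\alpha$, factor out the $m$-independent exponential, recognise the remainder as $G(hMa_0,\, h(a_0 r + M b_0),\, k)$, and then observe that $g' = \gcd(M,k)$ divides $\gcd(hMa_0,k)$ but not $h(a_0 r + M b_0)$ because $\gcd(g', h) = \gcd(g', a_0) = \gcd(g', r) = 1$. Your write-up is in fact more explicit than the paper's in justifying why $\gcd(r,2M)=1$ (via Assumption~\ref{assump:Gauss_sum}$(ii)$ and the factorisation $\veps = \chi\psi$), a step the paper leaves implicit.
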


\begin{lem} \label{lem:Gauss_sum_indep_Bernoulli}
	If $ \gcd(M, k) = 1 $, then
	\begin{gather*}
	\sum_{m \in \Z/k\Z} \bm{e}\bigg( \frac{h}{k} M Q_0 (m + \alpha ) \bigg)
	\end{gather*}
	is independent of $ \alpha \in \frac{1}{2M} \Z $ such that $ \chi(\alpha) \neq 0 $.
\end{lem}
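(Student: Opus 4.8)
The plan is to adapt the proof of Lemma~\ref{lem:Gauss_sum_indep_general} to the one--variable setting, the only new feature being that $2$--adic information must be tracked carefully. We may assume $\psi\not\equiv 0$, for otherwise $\veps\equiv 0$ and there is nothing to prove. Since $\gcd(M,k)=1$, fix $M^*\in\Z$ with $MM^*\equiv 1\pmod k$ and write $1-MM^*=k\ell_0$ with $\ell_0\in\Z$. Under the assumption in Proposition~\ref{prop:Gauss_sum}$(ii)$, any $\alpha$ with $\chi(\alpha)\neq 0$ satisfies $\veps(\alpha,\beta)\neq 0$ for some $\beta$ with $\psi(\beta)\neq 0$, so by Assumption~\ref{assump:Gauss_sum}$(ii)$ we may write $\alpha=r/(2M)$ with $\gcd(r,2M)=1$, and by Assumption~\ref{assump:Gauss_sum}$(iii)$ the residues $M\alpha\equiv r/2\pmod\Z$ and $M\alpha^2\equiv r^2/(4M)\pmod\Z$ are independent of such $\alpha$; equivalently, both the parity of $r$ and the class $r^2\bmod 4M$ are constant on $\{\chi\neq 0\}$. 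Put $\alpha^*:=MM^*\alpha=M^*r/2\in\frac{1}{2}\Z$.

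I would then record that, for every $m\in\Z$,
\[
MQ_0(m+\alpha)\equiv MQ_0(m+\alpha^*)+\bigl(MQ_0(\alpha)-MQ_0(\alpha^*)\bigr)\pmod{k\Z}.
\]
Indeed, writing $MQ_0(m+\alpha)=Ma_0m^2+Mb_0m+2Ma_0m\alpha+MQ_0(\alpha)$ and likewise for $\alpha^*$, the difference of the two cross terms is $2Ma_0m(\alpha-\alpha^*)=a_0mr(1-MM^*)=a_0mr\,k\ell_0\in k\Z$. Summing over $m$ and factoring out the term that does not depend on $m$,
\begin{gather*}
\sum_{m\in\Z/k\Z}\bm{e}\!\left(\frac{h}{k}MQ_0(m+\alpha)\right) \\
\qquad {}=\bm{e}\!\left(\frac{h}{k}\bigl(MQ_0(\alpha)-MQ_0(\alpha^*)\bigr)\right)\sum_{m\in\Z/k\Z}\bm{e}\!\left(\frac{h}{k}MQ_0(m+\alpha^*)\right),
\end{gather*}
so it suffices to prove that both factors on the right are independent of $\alpha$ with $\chi(\alpha)\neq 0$.

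For the Gauss sum factor, replacing $m$ by $m+j$ shows that it depends only on $\alpha^*\bmod\Z$; and since $\alpha^*=M^*r/2$ with $M^*r$ of constant parity, $\alpha^*\bmod\Z$ equals a fixed $\delta_0\in\{0,\frac{1}{2}\}$, so this factor equals the constant $\sum_{m\in\Z/k\Z}\bm{e}(\frac{h}{k}MQ_0(m+\delta_0))$. For the prefactor, I would substitute $\alpha-\alpha^*=\alpha k\ell_0$ and $\alpha^2-\alpha^{*2}=\alpha^2k\ell_0(2-k\ell_0)$ (using $1+MM^*=2-k\ell_0$) into $MQ_0(\alpha)-MQ_0(\alpha^*)=Ma_0(\alpha^2-\alpha^{*2})+Mb_0(\alpha-\alpha^*)$, obtaining
\[
\frac{h}{k}\bigl(MQ_0(\alpha)-MQ_0(\alpha^*)\bigr)=ha_0\ell_0(2-k\ell_0)\,(M\alpha^2)+\frac{hb_0\ell_0 r}{2}.
\]
Modulo $\Z$, the first summand depends only on $M\alpha^2\bmod\Z$, because its integral part multiplied by the integer $ha_0\ell_0(2-k\ell_0)$ contributes an integer; the second summand depends only on the parity of $hb_0\ell_0 r$, hence only on the parity of $r$. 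Both are constant on $\{\chi\neq 0\}$ by Assumption~\ref{assump:Gauss_sum}$(iii)$, so the prefactor is constant and the lemma follows.

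The point requiring care---rather than real difficulty---is exactly this $2$--adic bookkeeping: because $M$ may be even, $2M$ need not be invertible modulo $2k$, so one must isolate the information $r\bmod 2$ (which controls $\alpha^*\bmod\Z$ and the $b_0$--linear term) from the information $r^2\bmod 4M$ (which controls the $a_0$--quadratic term); both are supplied by Assumption~\ref{assump:Gauss_sum}$(iii)$, and everything else is a routine expansion of the quadratic form $Q_0$. The companion Lemma~\ref{lem:Gauss_sum_zero_Bernoulli}, treating $\gcd(M,k)>1$, is handled as in Lemma~\ref{lem:Gauss_sum_zero_general} via the vanishing criterion of Lemma~\ref{lem:Gauss_sum_fundamental}.
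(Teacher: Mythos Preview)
Your proof is correct and follows the same strategy as the paper: pick $M^*$ with $MM^*\equiv 1\pmod k$, replace $\alpha$ by $\alpha^*=MM^*\alpha\in\tfrac12\Z$, and then argue that both the resulting Gauss sum and the separated prefactor depend only on the invariants supplied by Assumption~\ref{assump:Gauss_sum}$(iii)$. Your write--up is in fact cleaner than the paper's: you explicitly check that the $m$--dependent cross term $2Ma_0m(\alpha-\alpha^*)=a_0mr\,k\ell_0$ lies in $k\Z$ (the paper leaves a stray $m$ in the factored--out prefactor, which is harmless for exactly this reason but confusing), and you spell out why $\alpha^*\bmod\Z$ and the two pieces of $\tfrac{h}{k}\bigl(MQ_0(\alpha)-MQ_0(\alpha^*)\bigr)$ are constant, separating the role of the parity of $r$ from that of $M\alpha^2\bmod\Z$. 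The caveat $\psi\not\equiv 0$ is also a point the paper glosses over when invoking Assumption~\ref{assump:Gauss_sum}$(iii)$ for $\alpha$ alone.
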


\begin{proof}[Proof of Lemma $ \ref{lem:Gauss_sum_zero_Bernoulli} $]
	Let $ r := 2M\alpha $.
	We can write
	\begin{gather*}
	\sum_{m \in \Z/k\Z}\!\!\!\! \bm{e}\bigg( \frac{h}{k} M Q_0 \bigg(\!m \!+\! \frac{r}{2M} \bigg) \!\bigg)
\!= \bm{e}\bigg( \frac{h(a_0 r^2 \!+\! 2M b_0r)}{4Mk} \bigg)
\!\!\!	\sum_{m \in \Z/k\Z}\!\!\!\! \bm{e}\bigg( \frac{h(Ma_0 m^2 \!+\! (a_0r \!+\! Mb_0)m)}{k} \bigg).
	\end{gather*}
	Since $ \gcd(hMa, k) \mid \gcd(M, k) $ and $ \gcd(M, k) \nmid a_0 r + Mb_0 $, we obtain the claim.
\end{proof}

\begin{proof}[Proof of Lemma $ \ref{lem:Gauss_sum_indep_Bernoulli} $]
Since $ \gcd(M, k) = 1 $, there exists an integer $ M^* \in \Z $ such that $ MM^* \equiv 1 \pmod k $.
	For any integer $ m \in \Z $, we have
	\begin{gather*}
	Q_0 ( m + \alpha ) - Q_0 ( m + MM^* \alpha )
\\ \qquad
	{}	\equiv (2a_0 m + b)(1 - M M^*) \alpha + \big(1 - (MM^*)^2\big) a_0 \alpha^2 \pmod{k\Z}.
	\end{gather*}
	Thus, we obtain
	\begin{align*}
		\sum_{m \in \Z/k\Z}\!\! \bm{e}\bigg( \frac{h}{k} M Q_0 (m + \alpha ) \bigg)
		= {}
		&\bm{e}\bigg( \frac{h(1 - (MM^*)^2)}{k} a_0 M \alpha^2 + \frac{h(1 - M M^*)}{k} (2a_0 m + b) M
\alpha \bigg)
\\
		&\times \sum_{m \in \Z/k\Z} \bm{e}\bigg( \frac{h}{k} M Q_0 (m + MM^* \alpha ) \bigg).
	\end{align*}
	This sum is independent of $ \alpha $ by Assumption \ref{assump:Gauss_sum}$(iii)$.
\end{proof}

\subsection[A proof of Proposition 4.2(iii), (iv) and (v)]
{A proof of Proposition \ref{prop:Gauss_sum}$\boldsymbol{(iii)}$, $\boldsymbol{(iv)}$ and $\boldsymbol{(v)}$} 

Finally, we prove Proposition \ref{prop:Gauss_sum}$(iii)$, $(iv)$ and $(v)$.

\begin{proof}[Proof of Proposition \ref{prop:Gauss_sum}${\boldsymbol{(iii)}}$, $\boldsymbol{(iv)}$ and $\boldsymbol{(v)}$]
	Firstly, we prove Proposition~\ref{prop:Gauss_sum}$(iii)$.
	We on\-ly prove that the most left hand side vanishes.
	It can be written as
	\begin{gather*}
		\sum_{\beta \in \frac{1}{2N}\Z/k\Z}\!\!\!\! \psi(\beta) C(\beta)\!\!\!
		\sum_{n \in \Z/2kN\Z}\!\!\! \!\bm{e}\bigg( \frac{an^2}{4kN} + \frac{\beta n}{k} \bigg)
	\!\!\!\sum_{\alpha \in \frac{1}{2M}\Z/\Z}\!\!\! \!\chi(\alpha) \widetilde{B}(\alpha)\!\!\!
\sum_{m \in \Z/2M\Z}\!\!\!\!\bm{e}\bigg( \frac{kcm^2}{4M} - \frac{1}{2}bmn + \alpha m \bigg).
	\end{gather*}
	It suffices to show that the second double sum for $ \alpha $ and $ m $ vanishes.
	By Proposition \ref{prop:reciprocity}, it equals to
	{\samepage\begin{gather*}
\frac{2M{\rm i}}{\sqrt{kc}}
		\sum_{\alpha \in \frac{1}{2M}\Z/\Z} \chi(\alpha) \widetilde{B}(\alpha)
		\sum_{m \in \Z/kc\Z} \bm{e}\bigg({-}\frac{M}{kc} \bigg( m + \alpha - \frac{bn}{2} \bigg)^2 \bigg)
\\ \qquad
	{}=	\frac{2M{\rm i}}{\sqrt{kc}}
		\sum_{\alpha \in \frac{1}{2M}\Z/kc\Z} \chi\bigg(\alpha + \frac{Nbn}{2M} \bigg) \widetilde{B}\bigg(\alpha + \frac{Nbn}{2M} \bigg)
		\bm{e}\bigg({-}\frac{M}{kc} \alpha^2 \bigg).
	\end{gather*}
	This vanishes by Lemma \ref{lem:Gauss_sum_Bernoulli_single}.

}
	
	Secondly, we prove Proposition \ref{prop:Gauss_sum}$(iv)$.
	By the multiplication theorem of Bernoulli polynomials, we have
	\begin{gather*}
	B_1(\alpha) = \sum_{0 \le m < k} B\bigg( \frac{\alpha}{k} + m \bigg).
	\end{gather*}
	Since
	\begin{gather*}
	\sum_{\alpha \in \frac{1}{2M}\Z \cap [0, 1)} \chi(\alpha) B_1(\alpha)
	= \sum_{\alpha \in \frac{1}{2M}\Z \cap [0, 1)} \chi(\alpha) B_1(1-\alpha)
	= -\sum_{\alpha \in \frac{1}{2M}\Z \cap [0, 1)} \chi(\alpha) B_1(\alpha)
	\end{gather*}
	is equal to $ 0 $, we obtain the claim by Proposition \ref{prop:Gauss_sum}$(iii)$.
	
	Finally, we prove Proposition \ref{prop:Gauss_sum}$(v)$.
	By \ref{assump:Gauss_sum}$(i)$, we have
	\begin{align*}
		\sum_{\alpha \in \frac{1}{2M}\Z \cap [0, k)} \chi(\alpha) \alpha		&=
\sum_{\alpha \in \frac{1}{2M}\Z \cap [0, 1)} \chi(\alpha) \bigg( k\alpha + \frac{1}{2}k(k-1) \bigg)
		= 		k \sum_{\alpha \in \frac{1}{2M}\Z \cap [0, 1)} \chi(\alpha) \alpha
\\
		&= k \sum_{\alpha \in \frac{1}{2M}\Z \cap [0, 1)} \chi(\alpha) ( 1 - \alpha )
		= -k \sum_{\alpha \in \frac{1}{2M}\Z \cap [0, 1)} \chi(\alpha) \alpha.
	\end{align*}
	Since it vanishes, we obtain the claim by Proposition \ref{prop:Gauss_sum}$(iii)$.
\end{proof}


\section{A limit value of a false theta function} \label{sec:false_theta_lim}


In this section, we will calculate a radial limit of the false theta function $ F_{Q, \veps} (\tau) $.
We use notation in Section \ref{sec:notation} throughout this section.
A key fact is the following lemma which follows from the Euler--Maclaurin summation formula.

\begin{lem}[{\cite[Lemma 2.2]{BMM}}] \label{lem:Euler-Maclaurin}
	For two real numbers $ \alpha $ and $ \beta $ and $ C^\infty $-function $ f \colon \R^2 \to \bbC $ which has rapid decay, we have an asymptotic evaluation
	\begin{gather*}
	\sum_{m, n = 0}^{\infty} f(t(m+\alpha, n+\beta))
	\sim \sum_{j,l = -1}^{\infty} \frac{B_{j+1}(\alpha)}{(j+1)!} \frac{B_{l+1}(\beta)}{(l+1)!} f^{(j, l)}(0, 0) t^{j+l}
	\end{gather*}
	as $ t \to +0 $, where $ F(t) \sim G(t) $ means that $ F(t) = G(t) + O\big(t^N\big) $ for any positive integer $ N $,
	$ B_n(x) $ is the $ n $-th Bernoulli polynomial, and for integers $ j, l \ge 0 $ let
	\begin{gather*}
		f^{(-1, -1)}(0, 0) := \int_0^\infty \int_0^\infty f(x, y)\, {\rm d}x {\rm d}y, \\
		f^{(j, -1)}(0, 0) := -\int_0^\infty \frac{\partial^j f}{\partial x^j} (0, y)\, {\rm d}y, \\
		f^{(-1, l)}(0, 0) := -\int_0^\infty \frac{\partial^l f}{\partial y^l} (x, 0) \,{\rm d}x, \\
		f^{(j, l)}(0, 0) := \frac{\partial^{j+l} f}{\partial x^j \partial y^l} (0, 0).
	\end{gather*}
\end{lem}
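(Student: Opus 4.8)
The plan is to obtain the two-variable expansion by iterating the one-variable Euler--Maclaurin formula once in each coordinate, treating the double sum as a tensor product of two half-infinite sums. I first recall the one-dimensional statement that underlies the lemma: for a smooth $ g \colon [0, \infty) \to \bbC $ of rapid decay,
\[
\sum_{m = 0}^{\infty} g(t(m + \alpha)) \sim \sum_{j = -1}^{\infty} \frac{B_{j+1}(\alpha)}{(j+1)!} g^{\langle j \rangle}(0)\, t^{j} \qquad (t \to +0),
\]
where $ g^{\langle -1 \rangle}(0) := \int_{0}^{\infty} g(x)\, {\rm d}x $ and $ g^{\langle j \rangle}(0) := -g^{(j)}(0) $ for $ j \ge 0 $. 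This is precisely the one-variable shadow of the asserted definitions of $ f^{(j, l)}(0, 0) $: the integral produces the leading $ t^{-1} $ term attached to $ B_{0}(\alpha) = 1 $, while the minus sign in front of the derivatives $ g^{(j)}(0) $ for $ j \ge 0 $ is exactly the source of the minus signs in the definitions of $ f^{(j, -1)}(0, 0) $ and $ f^{(-1, l)}(0, 0) $.

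Next I would run the iteration. For fixed $ m $, apply the one-variable formula to the $ n $-sum of the function $ y \mapsto f(t(m + \alpha), y) $. Its $ l = -1 $ coefficient is $ \int_{0}^{\infty} f(t(m+\alpha), y)\, {\rm d}y $, and its $ l \ge 0 $ coefficients are $ -\partial_{y}^{l} f(t(m+\alpha), 0) $. Each of these is, as a function of $ x = t(m+\alpha) $, again smooth and rapidly decaying, so I may apply the one-variable formula a second time in the $ m $-sum. The $ (j, l) = (-1, -1) $ term then reads $ \int_{0}^{\infty}\!\int_{0}^{\infty} f(x, y)\, {\rm d}x\, {\rm d}y = f^{(-1, -1)}(0, 0) $; a term with $ j \ge 0, l = -1 $ picks up a single minus sign and reproduces $ f^{(j, -1)}(0, 0) = -\int_{0}^{\infty} \partial_{x}^{j} f(0, y)\, {\rm d}y $; symmetrically for $ j = -1, l \ge 0 $; and a term with $ j, l \ge 0 $ collects two minus signs, which cancel to give $ f^{(j, l)}(0, 0) = \partial_{x}^{j} \partial_{y}^{l} f(0, 0) $. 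Collecting the powers of $ t $ produces exactly $ t^{j + l} $ with the stated product of Bernoulli coefficients.

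The main obstacle is making this double expansion rigorous, i.e.\ justifying that the two asymptotic expansions may be composed without their remainders spoiling the order in $ t $. I would handle this with the finite-order Euler--Maclaurin formula carrying its explicit periodic Bernoulli integral remainder; in the $ n $-direction this remainder has the shape $ \frac{\pm 1}{N!} \int_{0}^{\infty} \overline{B}_{N}\big(\frac{y}{t} - \beta\big)\, \partial_{y}^{N} f(x, y)\, {\rm d}y $, which is bounded in absolute value by $ C_{N}\, t \int_{0}^{\infty} \abs{\partial_{y}^{N} f(x, y)}\, {\rm d}y $ with $ C_{N} := \sup_{u \in \R} \abs{\overline{B}_{N}(u)} $. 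The delicate point is uniformity in $ m $: this inner remainder must be controlled uniformly in $ x = t(m + \alpha) $ and must itself be a rapidly decaying function of $ x $, so that applying the $ m $-direction formula (or simply summing over $ m $) keeps the contribution $ O\big(t^{N}\big) $. Rapid decay of $ f $ together with all of its partial derivatives guarantees both the convergence of every integral appearing and the decay of the remainder in $ x $; it also makes all boundary contributions at $ y = \infty $ and $ x = \infty $ vanish, which is why the stated formula carries no such terms. Finally I would bound each of the finitely many cross terms---the inner main terms against the outer remainder and the inner remainder against the outer expansion---by an integral of a high-order mixed partial derivative of $ f $, conclude that the total error is $ O\big(t^{N}\big) $ for every $ N $, and hence that the truncated double expansion agrees with the claimed series in the sense of $ \sim $.
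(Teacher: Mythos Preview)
The paper does not give its own proof of this lemma: it is simply quoted as \cite[Lemma~2.2]{BMM} and used as input for Lemma~\ref{lem:Phi_asymptotic}. So there is no in-paper argument to compare your proposal against.

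That said, your outline is exactly the standard derivation of the two-variable Euler--Maclaurin expansion and is the way \cite{BMM} arrive at it as well: iterate the one-variable formula, first in $n$ and then in $m$, and check that the sign conventions for $f^{(-1,-1)}$, $f^{(j,-1)}$, $f^{(-1,l)}$, $f^{(j,l)}$ arise precisely from the single minus sign carried by each one-dimensional step at nonnegative index. Your identification of the cross terms and the need for uniform-in-$x$ control of the inner remainder is the right point to emphasise.

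One small slip: the bound you wrote for the inner remainder,
\[
\Bigl|\tfrac{\pm 1}{N!}\int_{0}^{\infty}\overline{B}_{N}\bigl(\tfrac{y}{t}-\beta\bigr)\,\partial_{y}^{N}f(x,y)\,{\rm d}y\Bigr|
\le C_{N}\,t\int_{0}^{\infty}\bigl|\partial_{y}^{N}f(x,y)\bigr|\,{\rm d}y,
\]
has the wrong power of $t$. After the rescaling $y=tu$ the remainder carries a factor $t^{N-1}$, not $t$; it is this $t^{N-1}$ (with $N$ arbitrary) that makes the composition of the two expansions legitimate. With that correction your argument goes through, and the rapid decay of $f$ and all its derivatives supplies both the vanishing of boundary terms at infinity and the decay in $x$ needed to sum the inner remainder over $m$.
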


We can calculate a radial limit of the false theta function $ F_{Q, \veps} (\tau) $ as follows.

\begin{lem} \label{lem:Phi_asymptotic}
	If
	\begin{gather*}
	\veps(\gamma) = \veps\big(\trans{(1, 1)} - \gamma\big) = \veps\big(\trans{(1 - \alpha, \beta)}\big) = \veps\big(\trans{(\alpha, 1 - \beta)}\big)
	\end{gather*}
	for any $ \gamma = \trans{(\alpha, \beta)} \in (2S)^{-1}\big(\Z^2\big) $, then we have an asymptotic evaluation
	\begin{equation} \label{eq:Phi_asymptotic}
		F_{Q, \veps} \bigg( \frac{h}{k} + \frac{t{\rm i}}{2\pi} \bigg)
		\sim \sum_{r = -1}^{\infty} a_{h, k}(r) t^r
	\end{equation}
	as $ t \to +0 $, where $ f(x, y) := {\rm e}^{-Q(x, y)} $ and
	\begin{align*}
		a_{h, k}(r) :={}& k^{2r}
\sum_{\gamma = \trans{(\alpha, \beta)} \in (2S)^{-1}(\Z^2) \cap [0, k)^2} \veps(\gamma) \bm{e}\bigg(\frac{h}{k} Q(\gamma)\bigg)
\\
		&\times\sum_{j+l = 2r, \, j, l \ge -1} \frac{1}{(j+1)! (l+1)!} B_{j+1}\bigg( \frac{\alpha}{k} \bigg) B_{l+1}\bigg( \frac{\beta}{k} \bigg) f^{(j, l)}(0, 0).
	\end{align*}
	
	Moreover, under Assumption $ \ref{assump:Gauss_sum} $ we have
	\begin{align*}
		a_{h, k}(r) ={}& k^{2r}
\sum_{\gamma = \trans{(\alpha, \beta)} \in (2S)^{-1}(\Z^2) \cap [0, k)^2} \veps(\gamma) \bm{e}\bigg(\frac{h}{k} Q(\gamma)\bigg) \\
		&{}\times\sum_{j+l = 2r, \, j, l \ge 0} \frac{1}{(j+1)! (l+1)!} B_{j+1}\bigg( \frac{\alpha}{k} \bigg) B_{l+1}\bigg( \frac{\beta}{k} \bigg) f^{(j, l)}(0, 0).
	\end{align*}
\end{lem}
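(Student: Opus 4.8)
The plan is to prove \eqref{eq:Phi_asymptotic} by first rewriting $F_{Q, \veps}$ as a finite sum of two-dimensional lattice sums to which Lemma \ref{lem:Euler-Maclaurin} applies, and then, for the ``moreover'' part, to discard the boundary terms using Proposition \ref{prop:Gauss_sum}. First I would break the sum over $0 \le \gamma \in (2S)^{-1}(\Z^2)$ according to the residue class of $\gamma$ modulo $k\Z^2$: writing $\gamma = \gamma_0 + k\mu$ with $\gamma_0 = \trans{(\alpha, \beta)} \in (2S)^{-1}(\Z^2) \cap [0, k)^2$ and $\mu = \trans{(m, n)} \in \Z_{\ge 0}^2$, and using $Q(\gamma_0 + k\mu) = Q(\gamma_0) + 2k\,\trans{\mu} S \gamma_0 + k^2 Q(\mu)$ together with $2S\gamma_0 \in \Z^2$, the phase $q^{Q(\gamma)}$ with $q = \bm{e}(h/k + t{\rm i}/(2\pi))$ factors as $\bm{e}\big(\tfrac{h}{k}Q(\gamma_0)\big)$ times a function of $t$ and $\mu$ only. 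Since $\veps$ is $k\Z^2$-periodic, $F_{Q, \veps}(h/k + t{\rm i}/(2\pi)) = \sum_{\gamma_0} \veps(\gamma_0)\bm{e}\big(\tfrac{h}{k}Q(\gamma_0)\big) \sum_{m, n \ge 0} g\big(t(m + \alpha/k, n + \beta/k)\big)$ after the rescaling $x \mapsto x/(k\sqrt{\text{something}})$ absorbing the quadratic form into $f(x, y) = {\rm e}^{-Q(x, y)}$; one must check that the resulting $f$ has the rapid decay required by Lemma \ref{lem:Euler-Maclaurin}, which follows from positive definiteness of $S$.

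Next I would apply Lemma \ref{lem:Euler-Maclaurin} to each inner sum with the shifts $(\alpha/k, \beta/k)$, giving the asymptotic expansion $\sum_{j, l \ge -1} \frac{B_{j+1}(\alpha/k)}{(j+1)!}\frac{B_{l+1}(\beta/k)}{(l+1)!} f^{(j,l)}(0,0) t^{j+l}$. Collecting the coefficient of $t^r$ across all $\gamma_0$ — and tracking the factor of $k$ picked up from the rescaling, which contributes $k^{2r}$ to the degree-$r$ term since $j + l = 2r$ once one observes that only even total degrees survive (the odd-degree contributions cancel under the symmetry $\veps(\gamma) = \veps(\trans{(1,1)} - \gamma)$ combined with $Q(\gamma) = Q(\trans{(1,1)} - \gamma) \bmod$ the relevant lattice, paired with $B_{j+1}(1 - x) = (-1)^{j+1}B_{j+1}(x)$) — yields exactly the stated formula for $a_{h,k}(r)$. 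The symmetry hypotheses on $\veps$ are used precisely here to kill the half-integral-degree and odd-degree terms and to make the sum over $\gamma_0 \in [0,k)^2$ well-defined independent of the choice of fundamental domain.

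For the ``moreover'' statement, the remaining task is to remove the terms with $j = -1$ or $l = -1$ from the expansion of $a_{h,k}(r)$ under Assumption \ref{assump:Gauss_sum}. The $j = l = -1$ term contributes (up to constants) $\sum_{\gamma_0} \veps(\gamma_0)\bm{e}(\tfrac{h}{k}Q(\gamma_0))$ times $f^{(-1,-1)}(0,0)$; by Assumption \ref{assump:Gauss_sum} this is covered by Proposition \ref{prop:Gauss_sum}$(i)$ (after noting $\veps$ splits as $\chi\psi$ so the hypotheses of parts (ii)--(iii) are met), hence vanishes. The mixed terms $j = -1, l \ge 0$ and $j \ge 0, l = -1$ contribute sums of the shape $\sum_{\gamma_0} \veps(\gamma_0)\bm{e}(\tfrac{h}{k}Q(\gamma_0)) \cdot (\text{polynomial in } \alpha/k) \cdot B_{l+1}(\beta/k) f^{(-1,l)}(0,0)$ and its transpose; each of these is a sum of the form treated in Proposition \ref{prop:Gauss_sum}$(ii)$ with $C$ a polynomial in $\alpha$ (respectively $\beta$) — since a Bernoulli polynomial evaluated at $\beta/k$, summed against the $\veps$-weighted Gauss phase, factors through exactly the vanishing statement of part (ii). So all boundary terms drop and the sum over $j, l$ may be restricted to $j, l \ge 0$.

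The main obstacle I anticipate is the bookkeeping in the first two paragraphs: getting the normalization of $f(x, y) = {\rm e}^{-Q(x,y)}$ and the power of $k$ exactly right, and rigorously justifying that the odd-total-degree terms cancel so that one may write $j + l = 2r$. This requires carefully pairing $\gamma_0$ with $\trans{(1,1)} - \gamma_0$ (or with $\trans{(1-\alpha, \beta)}$ etc.) inside the fundamental domain $[0, k)^2$, using the invariance of $Q \bmod k\Z$ under these reflections (which holds because $2S\gamma_0 \in \Z^2$ and $Q(\trans{(1,1)}) \in \Z$), and combining it with $B_{n}(1 - x) = (-1)^n B_n(x)$; the sign $(-1)^{j+l}$ must come out to $+1$, forcing $j + l$ even. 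Once that cancellation is in hand, the application of Lemma \ref{lem:Euler-Maclaurin} and of Proposition \ref{prop:Gauss_sum} is essentially mechanical.
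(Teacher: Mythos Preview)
Your approach is correct and matches the paper's: decompose the sum modulo $k\Z^2$, apply Lemma~\ref{lem:Euler-Maclaurin} with parameter $k\sqrt{t}$ and shifts $(\alpha/k,\beta/k)$, then use the reflection $\gamma\mapsto(k-\alpha,k-\beta)$ together with $B_n(1-x)=(-1)^nB_n(x)$ to eliminate the odd powers of $\sqrt{t}$. For the ``moreover'' part the paper cites only Proposition~\ref{prop:Gauss_sum}$(i)$, whereas you also invoke~$(ii)$ for the mixed boundary terms $j=-1$ or $l=-1$; you are right that those terms carry a nontrivial weight $B_{l+1}(\beta/k)$ (note $B_{j+1}=B_0=1$ there, not a general polynomial in $\alpha$), but be aware that Proposition~\ref{prop:Gauss_sum}$(ii)$ requires the factorisation $\veps=\chi\psi$, which is strictly stronger than Assumption~\ref{assump:Gauss_sum} as stated in the lemma.
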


\begin{proof}
	Our proof is based on the proof of \cite[Lemma 3.2]{BMM}.
	Since we can write
	\begin{gather*}
	F_{Q, \veps} \bigg( \frac{h}{k} + \frac{t{\rm i}}{2\pi} \bigg)
	=
	\sum_{\gamma = \trans{(\alpha, \beta)} \in (2S)^{-1}(\Z^2) \cap [0, k)^2} \veps(\gamma) \bm{e}\bigg(\frac{h}{k} Q(\gamma)\bigg)
	\sum_{\mu \in \Z_{\ge 0}^2} f(\sqrt{t}(\mu + \gamma)),
	\end{gather*}
	we have an asymptotic evaluation
	\begin{align*}
		F_{Q, \veps} \bigg( \frac{h}{k} + \frac{t{\rm i}}{2\pi} \bigg)
\sim{} &\sum_{j,l = -1}^{\infty} k^{j+l} t^{(j+l)/2}
\sum_{\gamma = \trans{(\alpha, \beta)} \in (2S)^{-1}(\Z^2) \cap [0, k)^2} \veps(\gamma) \bm{e}\bigg(\frac{h}{k} Q(\gamma)\bigg)
\\
		&\times\frac{1}{(j+1)! (l+1)!} B_{j+1}\bigg( \frac{\alpha}{k} \bigg) B_{l+1}\bigg( \frac{\beta}{k} \bigg) f^{(j, l)}(0, 0)
	\end{align*}
	by Lemma \ref{lem:Euler-Maclaurin}.
	By assumption, the right hand side yields
	\begin{gather*}
		\sum_{j,l = -1}^{\infty} k^{j+l} t^{(j+l)/2}
		\sum_{\gamma = \trans{(\alpha, \beta)} \in (2S)^{-1}(\Z^2) \cap [0, k)^2} \veps(\gamma) \bm{e}\bigg( \frac{h}{k} Q(k - \alpha, k - \beta) \bigg)
\\ \qquad
		{}\times\frac{1}{(j+1)! (l+1)!} B_{j+1}\bigg( \frac{k - \alpha}{k} \bigg) B_{l+1}\bigg( \frac{k - \beta}{k} \bigg) f^{(j, l)}(0, 0).
	\end{gather*}
	Since $ Q(k - \alpha, k - \beta) \equiv Q(\gamma) \pmod{k\Z} $ and $ B_n(1-x) = (-1)^n B_n(x) $, this becomes
	\begin{gather*}
		\sum_{j,l = -1}^{\infty} k^{j+l} t^{(j+l)/2} (-1)^{j+l+2}
		\sum_{\gamma = \trans{(\alpha, \beta)} \in (2S)^{-1}(\Z^2) \cap [0, k)^2} \veps(\gamma) \bm{e}\bigg(\frac{h}{k} Q(\gamma)\bigg)
\\ \qquad
		{}\times\frac{1}{(j+1)! (l+1)!} B_{j+1}\bigg( \frac{\alpha}{k} \bigg) B_{l+1}\bigg( \frac{\beta}{k} \bigg) f^{(j, l)}(0, 0).
	\end{gather*}
	Since it vanishes for the odd values of $ j+l $, we obtain the asymptotic evaluation (\ref{eq:Phi_asymptotic}).
	
	The latter claim follows from Proposition \ref{prop:Gauss_sum}$(i)$.
\end{proof}

Thus, we obtain the following proposition by Proposition \ref{prop:Gauss_sum}$(i)$ and $(ii)$.

\begin{prop} \label{prop:Phi_lim}
	If Assumption $ \ref{assump:Gauss_sum} $ and the assumption in Lemma $ \ref{lem:Phi_asymptotic} $ and Proposition $ \ref{prop:Gauss_sum}(ii)$ hold, then we have\vspace{-.5ex}
	\begin{gather*}
		\lim_{t \to 0} F_{Q, \veps} \bigg( \frac{h}{k} + ti \bigg)
		= \frac{1}{k^2}
		\sum_{\gamma = \trans{(\alpha, \beta)} \in (2S)^{-1}(\Z^2) \cap [0, k)^2} \veps(\gamma) \bm{e}\bigg(\frac{h}{k} Q(\gamma)\bigg)
		\alpha \beta.
	\end{gather*}
\end{prop}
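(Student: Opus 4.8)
The plan is to read off the radial limit from the asymptotic expansion furnished by Lemma~\ref{lem:Phi_asymptotic}, and then to simplify its constant term using the vanishing results of Proposition~\ref{prop:Gauss_sum}. Throughout, a sum over $\gamma = \trans{(\alpha, \beta)}$ is understood to run over $(2S)^{-1}(\Z^2) \cap [0, k)^2$, a full set of representatives for $(2S)^{-1}(\Z^2)/k\Z^2$; note that $\veps(\gamma)\, \bm{e}(\frac{h}{k} Q(\gamma))$ descends to that quotient, since $2\trans{\gamma} S \mu \in \Z$ and $Q(\mu) \in \Z$ for all $\mu \in \Z^2$.

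First I would invoke Lemma~\ref{lem:Phi_asymptotic}, whose hypotheses --- Assumption~\ref{assump:Gauss_sum} together with the symmetry of $\veps$ under $\gamma \mapsto \trans{(1,1)} - \gamma$ and under the two partial reflections $\trans{(\alpha, \beta)} \mapsto \trans{(1 - \alpha, \beta)}$, $\trans{(\alpha, \beta)} \mapsto \trans{(\alpha, 1 - \beta)}$ --- are among the hypotheses of the present proposition. This gives
\[
F_{Q, \veps}\!\left( \frac{h}{k} + \frac{t {\rm i}}{2\pi} \right) \sim \sum_{r = -1}^{\infty} a_{h,k}(r)\, t^{r} \qquad (t \to +0).
\]
After the harmless rescaling $t \mapsto 2\pi t$ relating the two normalisations of the limit (which does not affect $t \to 0$), it then suffices to show that $a_{h,k}(-1) = 0$ and that $a_{h,k}(0)$ equals the right-hand side of the proposition; then the series above is bounded as $t \to 0$ and tends to $a_{h,k}(0)$.

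Next I would compute the two coefficients. In $a_{h,k}(-1)$ only the pair $(j, l) = (-1, -1)$ contributes, and there $B_0 \equiv 1$, so $a_{h,k}(-1)$ is a scalar multiple of $\sum_{\gamma} \veps(\gamma)\, \bm{e}(\frac{h}{k} Q(\gamma))$, hence vanishes by Proposition~\ref{prop:Gauss_sum}$(i)$. For $a_{h,k}(0)$ the relevant pairs are $(0, 0)$, $(-1, 1)$ and $(1, -1)$; using $f(0,0) = 1$, $B_1(x) = x - \frac{1}{2}$ and $B_2(x) = x^2 - x + \frac{1}{6}$ and expanding the Bernoulli polynomials in $\alpha/k$ and $\beta/k$, one finds that $a_{h,k}(0)$ is a linear combination of the six sums obtained from $\sum_{\gamma} \veps(\gamma)\, \bm{e}(\frac{h}{k} Q(\gamma))$ by inserting a final factor $1$, $\alpha$, $\beta$, $\alpha^2$, $\beta^2$ or $\alpha\beta$, the coefficient of the $\alpha\beta$-sum being exactly $1/k^2$. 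The $1$-sum vanishes by Proposition~\ref{prop:Gauss_sum}$(i)$; the sums with factor $\alpha$, $\beta$, $\alpha^2$ or $\beta^2$ vanish by Proposition~\ref{prop:Gauss_sum}$(ii)$ --- which applies since, by hypothesis, $\veps(\alpha, \beta) = \chi(\alpha) \psi(\beta)$ with $\sum \chi = \sum \psi = 0$ --- upon taking for $C \colon \Q/k\Z \to \bbC$ the map sending a class to its representative in $[0, k)$, or to the square thereof. Hence $a_{h,k}(0) = \frac{1}{k^2} \sum_{\gamma} \veps(\gamma)\, \bm{e}(\frac{h}{k} Q(\gamma))\, \alpha\beta$, which is the assertion.

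I do not expect a serious obstacle here: the analytic content (Euler--Maclaurin, already packaged in Lemma~\ref{lem:Phi_asymptotic}) and the arithmetic content (Proposition~\ref{prop:Gauss_sum}) are in hand, and the remaining argument is essentially bookkeeping. The only points calling for a little care are the rescaling between the two forms of the limit, checking that $\veps(\gamma)\, \bm{e}(\frac{h}{k} Q(\gamma))$ is well defined modulo $k\Z^2$, and matching the auxiliary maps $C$ in Proposition~\ref{prop:Gauss_sum}$(ii)$ to the monomials $\alpha, \beta, \alpha^2, \beta^2$ arising in the Bernoulli expansion of $a_{h,k}(0)$.
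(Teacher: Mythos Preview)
Your argument is correct and is essentially the paper's own proof: read off the limit from the asymptotic expansion of Lemma~\ref{lem:Phi_asymptotic} and eliminate the unwanted pieces of the constant term via Proposition~\ref{prop:Gauss_sum}$(i)$ and~$(ii)$. The only cosmetic difference is that the paper uses the second form of $a_{h,k}(r)$ in Lemma~\ref{lem:Phi_asymptotic} (the one with $j,l\ge 0$), so that for $r=0$ only the pair $(j,l)=(0,0)$ survives and the $\alpha^2,\beta^2$ sums never appear; your route from the first form produces these two extra terms, but they are killed by the same application of Proposition~\ref{prop:Gauss_sum}$(ii)$.
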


We obtain the following quantum modular form by this proposition and Theorem \ref{thm:BMM}$(i)$ which is proved in \cite[Theorems 1.1 and 3.1]{BMM}.

\begin{Theorem} \label{thm:quantum_mod_form}
	Let $ f_\Gamma \colon \Q \to \bbC $ be a map defined by\vspace{-.5ex}
	\begin{gather*}
	f_\Gamma\bigg( \frac{h}{k} \bigg)
	= \frac{1}{k^2}
	\sum_{\gamma = \trans{(\alpha, \beta)} \in (2S)^{-1}(\Z^2) \cap [0, k)^2}
	\veps(\gamma) \bm{e}\bigg( \frac{h}{k}Q(\gamma) \bigg) \alpha \beta
	\end{gather*}
	for an irreducible fraction $ h/k \in \Q $.
	If Assumption $ \ref{assump:Gauss_sum} $ and the assumption in Lemma $ \ref{lem:Phi_asymptotic} $ and Proposition $ \ref{prop:Gauss_sum}(ii)$ hold, then $ f_{\Gamma} $ is a quantum modular form of depth two and of weight one with the quantum set $ \Q $.\vspace{-1ex}
\end{Theorem}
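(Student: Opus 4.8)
The plan is to recognise $f_\Gamma$ as the radial limit function of the rank two false theta function $F_{Q,\veps}$ and then to invoke the quantum modularity theorem of Bringmann--Mahlburg--Milas.

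First I would observe that the hypotheses of the present theorem are exactly the hypotheses of Proposition~\ref{prop:Phi_lim}. Applying that proposition, for every irreducible fraction $h/k$ we obtain
\[
f_\Gamma\!\left(\frac{h}{k}\right) = \lim_{t \to +0} F_{Q,\veps}\!\left(\frac{h}{k} + t{\rm i}\right).
\]
Since $\bm{e}(h/k + t{\rm i}) = \zeta_k^{\,h}\,{\rm e}^{-2\pi t}$, the right-hand side is the radial limit of $F_{Q,\veps}(q)$ as $q \to \zeta_k^{\,h}$ from inside the unit disk; and because $F_{Q,\veps} = F_{1,Q,\veps}$ in the notation of Theorem~\ref{thm:BMM}, this identifies $f_\Gamma$ with precisely the function whose depth two, weight one quantum modularity is the content of Theorem~\ref{thm:BMM}$(i)$, that is, of \cite[Theorems~1.1 and~3.1]{BMM}.

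Next I would check that the hypotheses of Theorem~\ref{thm:BMM}$(i)$ are met with $K = 1$. The integrality of the coefficients $A = Ma$, $B = MNb$, $C = Nc$ of $Q$ is immediate, and the symmetry $\veps(\alpha,\beta) = \veps(1-\alpha,\beta) = \veps(\alpha,1-\beta) = \veps(1-\alpha,1-\beta)$ is part of the hypothesis borrowed from Lemma~\ref{lem:Phi_asymptotic}. The one remaining point is the identification of the quantum set: by definition $h/k \in \calQ_{1,Q,\veps}$ if and only if $\sum_{0 \le \gamma \in (2S)^{-1}(\Z^2)/k\Z^2} \veps(\gamma)\,\bm{e}\big(\frac{h}{k}Q(\gamma)\big) = 0$, and this vanishing is exactly Proposition~\ref{prop:Gauss_sum}$(i)$, which holds under Assumption~\ref{assump:Gauss_sum}. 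Hence $\calQ_{1,Q,\veps} = \Q$. Note that we may use $K = 1$ here --- rather than the $K = 2$ employed in Theorem~\ref{thm:BMM}$(ii)$ --- precisely because Assumption~\ref{assump:Gauss_sum} already forces the $K = 1$ Gauss sums to vanish.

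Combining the two steps, Theorem~\ref{thm:BMM}$(i)$ with $K = 1$ shows that the radial limit function of $F_{Q,\veps}$ --- which the first step identifies with $f_\Gamma$ --- is a quantum modular form of depth two and of weight one with quantum set $\Q$, for the congruence subgroup prescribed there; this is the assertion. The proof is therefore essentially an assembly of Proposition~\ref{prop:Phi_lim}, Proposition~\ref{prop:Gauss_sum}$(i)$ and Theorem~\ref{thm:BMM}$(i)$; all of the genuine work, namely the Euler--Maclaurin evaluation of the limit and the vanishing of the weighted Gauss sums, is contained in those ingredients, which have already been established. So I do not anticipate a serious obstacle beyond bookkeeping of conventions --- in particular, one should confirm that the condition ``$0 \le \gamma$'' and the range $[0,k)^2$ describe the same complete set of representatives of $(2S)^{-1}(\Z^2)/k\Z^2$, so that Proposition~\ref{prop:Gauss_sum}$(i)$ genuinely certifies $h/k \in \calQ_{1,Q,\veps}$.
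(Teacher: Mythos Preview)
Your proposal is correct and follows essentially the same route as the paper: the authors simply state that the theorem is obtained from Proposition~\ref{prop:Phi_lim} together with Theorem~\ref{thm:BMM}$(i)$, and your write-up fleshes out precisely that argument, including the use of Proposition~\ref{prop:Gauss_sum}$(i)$ to see that $\calQ_{1,Q,\veps}=\Q$. The only additional content you supply beyond the paper is the explicit verification of the hypotheses of Theorem~\ref{thm:BMM}$(i)$ and the remark on representatives, which is harmless bookkeeping.
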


\section{Witten--Reshetikhin--Turaev invariants} \label{sec:WRT}

In this section, we prove that the WRT invariant is the radial limit of the homological block.
Firstly, we write the WRT invariant as a sum of rational functions.
Secondly, the Euler--Maclaurin summation formula is used to express the WRT invariant as a sum of Bernoulli polynomials.
Finally, we write the WRT invariant as the radial limit of the homological block calculated in Section~\ref{sec:false_theta_lim}.

We start with an expression for the WRT invariant given by Gukov--Pei--Putrov--Vafa~\cite[formula~(A.10)]{GPPV}.
In our situation it is written down as follows:
\begin{align}
		\tau_{k}(M_{3}(\Gamma))
		={}
		&\frac{-{\rm i} \zeta_{k}^{-9/2}}{16k^{3} \big(\zeta_{2k}-\zeta_{2k}^{-1}\big)}
		\sum_{l \in (\Z \smallsetminus k\Z)^{6} / 2k\Z^{6}}
		\prod_{v=1}^{6}\zeta_{k}^{w_{v}(l_{v}^{2} - 1) / 4}\bigg( \frac{1}{\zeta_{k}^{l_{v}/2} - \zeta_{k}^{-l_{v}/2}} \bigg)^{\deg(v)-2} \nonumber
\\
		&\times \prod_{(v, v') \in \mathrm{Edges}} \frac{\zeta_{k}^{l_{v}l_{v'}/2} - \zeta_{k}^{-l_{v}l_{v'}/2}}{2}.
\label{eq:WRT}
\end{align}
We can write the WRT invariant as a sum of rational functions.

\begin{prop}\label{prop:WRT_inv}
	It holds
	\begin{align*}
		\tau_k(M_3(\Gamma))
		= {}& \frac{{\rm i} \zeta_{k}^{-(18 + \sum_{v=1}^{6}w_{v} + \sum_{v=3}^{6} 1/w_{v})/4}}{4k \big(\zeta_{2k}-\zeta_{2k}^{-1}\big) \sqrt{MN}}
\sum_{\substack{m \in (\Z \smallsetminus k\Z) / 2kw_3w_4\Z \\ n \in (\Z \smallsetminus k\Z) / 2kw_5w_6\Z}}		\zeta_k^{-(m, n) S^{-1} \trans{(m, n)}/4}
		\\
		&\times\frac{\big( \zeta_k^{m/(2w_3)} - \zeta_k^{-m/(2w_3)} \big) \big( \zeta_k^{m/(2w_4)} - \zeta_k^{-m/(2w_4)} \big)}{ \zeta_k^{m/2} - \zeta_k^{-m/2}}
\\
	&{} \times\frac{\big( \zeta_k^{n/(2w_5)} - \zeta_k^{-n/(2w_5)} \big) \big( \zeta_k^{n/(2w_6)} - \zeta_k^{-n/(2w_6)}\big)}{\zeta_k^{n/2} - \zeta_k^{-n/2}},
	\end{align*}
	and thus,\begin{gather*}
\begin{split}
&		\tau_k(M_3(\Gamma))
		=  \frac{{\rm i} \zeta_{k}^{-(18 + \sum_{v=1}^{6}w_{v} + \sum_{v=3}^{6} 1/w_{v})/4}}{4k \big(\zeta_{2k}-\zeta_{2k}^{-1}\big) \sqrt{MN}}
\\
&\hphantom{\tau_k(M_3(\Gamma))=}{}
\times\sum_{\mu = \trans{(m, n)} \in (\Z \smallsetminus k\Z)^2/2kS(\Z^2)}
\zeta_k^{-\trans{\mu} S^{-1} \mu/4}G^\omega\big(\zeta_k^{m/(2M)}\big) G^\varpi\big(\zeta_k^{n/(2N)}\big),
\end{split}
	\end{gather*}
where $ \omega := (w_3, w_4)$, $\varpi := (w_5, w_6) $, and\vspace{-1ex}
	\begin{gather*}
	G^{(w, w')}(z) :=
	\frac{(z^{w} - z^{-w})(z^{w'} - z^{-w'})}{(z^{ww'} - z^{-ww'})}
	=
	- \sum_{n=1}^{\infty} \chi^{(w, w')} (n) z^n.
	\end{gather*}
\end{prop}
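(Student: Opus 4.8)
The plan is to start from the Gukov--Pei--Putrov--Vafa formula~\eqref{eq:WRT} and reduce the sum over $ l \in (\Z\smallsetminus k\Z)^6/2k\Z^6 $ to a sum over only two variables by summing out the four ``leaf'' variables $ l_3, l_4, l_5, l_6 $. For the H-graph, each vertex $ v \in \{3,4,5,6\} $ has degree one and is joined only to $ 1 $ or $ 2 $; the summand depends on $ l_3 $ (say) through the factor $ \zeta_k^{w_3(l_3^2-1)/4}\big(\zeta_k^{l_3/2}-\zeta_k^{-l_3/2}\big)\cdot\tfrac12\big(\zeta_k^{l_1l_3/2}-\zeta_k^{-l_1l_3/2}\big) $. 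First I would complete the square in $ l_3 $ modulo $ 2kw_3\Z $ and evaluate the resulting linear-Gauss-type sum; since $ w_3 \mid $ the relevant index one gets a clean evaluation, introducing a new summation variable (call it $ m $) running over a residue class determined by $ l_1 $, together with a prefactor $ \zeta_k^{-(\text{something})/(4w_3)} $ and a sign/support condition. Doing the same for $ l_4 $ with modulus $ 2kw_4\Z $ and combining, the two contributions merge — after a change of variables $ m \leftrightarrow $ a common variable modulo $ 2kw_3w_4\Z = 2kM\Z $ — into the product $ \big(\zeta_k^{m/(2w_3)}-\zeta_k^{-m/(2w_3)}\big)\big(\zeta_k^{m/(2w_4)}-\zeta_k^{-m/(2w_4)}\big)/\big(\zeta_k^{m/2}-\zeta_k^{-m/2}\big) $ appearing in the statement, the $ \zeta_k^{m/2}-\zeta_k^{-m/2} $ in the denominator coming from the surviving $ \deg(1)-2 = 1 $ power at vertex $ 1 $. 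Symmetrically the pair $ l_5,l_6 $ produces the variable $ n $ modulo $ 2kw_5w_6\Z = 2kN\Z $ and the analogous factor at vertex $ 2 $.

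The next step is to track the quadratic form. After eliminating $ l_1, l_2 $ as well (the sum over $ l_1 \in (\Z\smallsetminus k\Z)/2k\Z $ and $ l_2 $ likewise, now weighted by $ \zeta_k^{w_1 l_1^2/4} $, $ \zeta_k^{w_2 l_2^2/4} $ and the cross terms with $ m, n $), one more round of completing the square turns the total exponent into the diagonalized form $ -\trans{\mu}S^{-1}\mu/4 $ with $ \mu = \trans{(m,n)} $, where $ S $ is precisely the $ 2\times2 $ submatrix of $ -W^{-1} $ from Section~\ref{sec:intro}; here I would use the explicit cofactor/Schur-complement computation $ S^{-1} $ in terms of $ M=w_3w_4 $, $ N=w_5w_6 $, $ a = -w_2w_5w_6+w_5+w_6 $, $ c = -w_1w_3w_4+w_3+w_4 $, and the hypothesis $ \det W = 1 $ to make the normalizing constants and the power $ \zeta_k^{-(18+\sum w_v+\sum 1/w_v)/4} $ come out exactly. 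The support restriction $ l_v \notin k\Z $ for the leaf vertices is what forces the condition $ \chi^{(w,w')}(n)\ne 0 $, i.e.\ $ n\equiv w'e+we'+ww'\pmod{2ww'} $ for signs $ e,e' $, and the sign $ ee' $ records the product of the two $ \pm $ choices; this is the combinatorial identity $ G^{(w,w')}(z) = -\sum_{n\ge1}\chi^{(w,w')}(n)z^n $, which I would verify directly by expanding $ (z^w-z^{-w})(z^{w'}-z^{-w'})/(z^{ww'}-z^{-ww'}) $ as a geometric series in $ z^{-2ww'} $ (valid since all $ w_v<0 $ so $ |z^{ww'}|>1 $ after the appropriate orientation, or equivalently as a formal/principal-value expansion) and matching coefficients. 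The two displayed forms of the answer are then the same statement, once written via $ G^\omega, G^\varpi $ and via the explicit rational functions.

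The main obstacle I expect is the bookkeeping of the $ k $-th roots of unity prefactors through the repeated square-completions: each elimination of a leaf variable contributes a factor like $ \zeta_k^{-c/(4w_v)} $ and a Gauss sum $ \sum_{l\bmod 2kw_v}\zeta_k^{w_v l^2/4+\cdots} $ whose evaluation (via Proposition~\ref{prop:reciprocity}, or a direct quadratic Gauss sum computation) carries its own $ \sqrt{}\, $ and $ \bm{e}(\sigma/8) $-type phase, and these must all assemble into the single clean constant $ {\rm i}\zeta_k^{-(18+\sum w_v+\sum1/w_v)/4}/\big(4k(\zeta_{2k}-\zeta_{2k}^{-1})\sqrt{MN}\big) $, with the $ 16k^3 $ and $ -{\rm i}\zeta_k^{-9/2} $ of~\eqref{eq:WRT} being absorbed. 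Getting the signature term right — note $ W $ is negative definite of rank $ 6 $, so $ \sigma = -6 $, contributing to the $ 18 $ in the exponent together with the $ \sum(w_v+3) $ shifts from the $ l_v^2-1 $ terms — requires care, as does confirming that the change of modulus from $ 2k\Z $ (for $ l_v $) to $ 2kw_3w_4\Z $ (for $ m $) is a genuine bijection on the relevant cosets. Everything else is a routine, if lengthy, manipulation of finite sums.
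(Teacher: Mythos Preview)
Your overall strategy---apply reciprocity (Proposition~\ref{prop:reciprocity}) to each leaf sum $l_3,l_4,l_5,l_6$, then combine variables---matches the paper. But there is a genuine confusion in how the trivalent variables $l_1,l_2$ are handled, and as written your plan would not go through.

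After reciprocity, the sum over each leaf $l_v$ ($v=3,4$) becomes a \emph{small} sum over $\Z/w_v\Z$ (not over $\Z/2kw_v\Z$), with summand $\zeta_k^{-(2kl_v'+l_1\pm1)^2/(4w_v)}$ still depending on $l_1$. The Chinese remainder theorem then merges the two small sums into a single sum over $n_1\in\Z/w_3w_4\Z$, and one sets $m:=l_1+2kn_1$, which runs over $(\Z\smallsetminus k\Z)/2kM\Z$. In other words, $l_1$ is not ``eliminated'' by another Gauss sum; it is \emph{absorbed} into $m$, and the restriction $l_1\notin k\Z$ survives as $m\notin k\Z$ in the final formula. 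Your proposed ``one more round of completing the square'' to sum out $l_1$ cannot work: the $l_1$-sum is over the incomplete range $(\Z\smallsetminus k\Z)/2k\Z$ (and the summand carries the singular factor $(\zeta_k^{l_1/2}-\zeta_k^{-l_1/2})^{-1}$), so reciprocity does not apply, and in any case the answer must retain a restricted sum over $m$. Once this is corrected, the quadratic exponent in $(m,n)$ is obtained simply by expanding $-(2kl_v'+l_1\pm1)^2/(4w_v)$ and collecting terms---no further Gauss sum is needed---and one reads off $-(m,n)S^{-1}\trans{(m,n)}/4$ directly from $w_1-1/w_3-1/w_4$, etc.

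A smaller point: the support condition $\chi^{(w,w')}(n)\ne0$ does not come from the leaf restriction $l_v\notin k\Z$ (those terms already vanish because of the factor $\zeta_k^{l_v/2}-\zeta_k^{-l_v/2}$); it is purely the combinatorics of the geometric-series expansion of $G^{(w,w')}(z)$, as you correctly describe afterwards.
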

\begin{proof}
	By (\ref{eq:WRT}), we obtain
	\begin{align*}
		\tau_{k}(M_3(\Gamma))
		= {}
		&\frac{-{\rm i} \zeta_{k}^{-(18 + \sum_{v=1}^{6}w_{v})/4}}{16k^{3} \big(\zeta_{2k}-\zeta_{2k}^{-1}\big)}
		\sum_{l_{1}, l_{2} \in (\Z \smallsetminus k\Z) / 2k\Z}
		\bigg( \prod_{v=1}^{2} \frac{\zeta_{k}^{w_{v}l_{v}^{2}/4}}{\zeta_{k}^{l_{v}/2} - \zeta_{k}^{-l_{v}/2}} \bigg) \frac{\zeta_{k}^{l_{1}l_{2}/2} - \zeta_{k}^{-l_{1}l_{2}/2}}{2}
\\
		&\times \prod_{v=3}^{4} \sum_{l_{v} \in \Z / 2k\Z} \frac{1}{2} \zeta_{k}^{w_{v}l_{v}^{2}/4} \big( \zeta_{k}^{(l_{1} + 1)l_{v}/2} - \zeta_{k}^{(l_{1} - 1)l_{v}/2} - \zeta_{k}^{(-l_{1} + 1)l_{v}/2 } + \zeta_{k}^{(-l_{1} - 1)l_{v}/2} \big) \\
		&\times \prod_{v=5}^{6} \sum_{l_{v} \in \Z / 2k\Z} \frac{1}{2} \zeta_{k}^{w_{v}l_{v}^{2}/4} \big( \zeta_{k}^{(l_{2} + 1)l_{v}/2} - \zeta_{k}^{(l_{2} - 1)l_{v}/2} - \zeta_{k}^{(-l_{2} + 1)l_{v}/2 } + \zeta_{k}^{(-l_{2} - 1)l_{v}/2} \big).
	\end{align*}
	Applying symmetries $l_{v} \mapsto - l_{v}$, we can show that\vspace{-1ex}
	\begin{align*}
		\tau_{k}(M_3(\Gamma)) = {}
		&\frac{-{\rm i} \zeta_{k}^{-(18 + \sum_{v=1}^{6}w_{v})/4}}{16k^{3} \big(\zeta_{2k}-\zeta_{2k}^{-1}\big)}
		\sum_{l_{1}, l_{2} \in (\Z \smallsetminus k\Z) / 2k\Z}
		\bigg( \prod_{v=1}^{2} \frac{\zeta_{k}^{w_{v}l_{v}^{2}/4}}{\zeta_{k}^{l_{v}/2} - \zeta_{k}^{-l_{v}/2}} \bigg) \frac{\zeta_{k}^{l_{1}l_{2}/2} - \zeta_{k}^{-l_{1}l_{2}/2}}{2}
\\
		&\times \prod_{v=3}^{4} \sum_{l_{v} \in \Z / 2k\Z} \zeta_{k}^{w_{v}l_{v}^{2}/4} \big( \zeta_{k}^{(l_{1} + 1)l_{v}/2} - \zeta_{k}^{(l_{1} - 1)l_{v}/2} \big)
\\
		&\times \prod_{v=5}^{6} \sum_{l_{v} \in \Z / 2k\Z} \zeta_{k}^{w_{v}l_{v}^{2}/4} \big( \zeta_{k}^{(l_{2} + 1)l_{v}/2} - \zeta_{k}^{(l_{2} - 1)l_{v}/2} \big).
	\end{align*}
	holds.
	Applying Proposition \ref{prop:reciprocity} for the case when $ L = \Z $ equipped with the standard inner product $ \sprod{\cdot, \cdot} $ and\vspace{-1ex}
	\begin{gather*}
	(h, z) :=
	\begin{cases}
		(w_{v}, (l_{1} \pm 1)/2k) & \text{if} \ v = 3, 4,
\\
		(w_{v}, (l_{2} \pm 1)/2k) & \text{if} \ v = 5, 6.
	\end{cases}
	\end{gather*}
	we obtain the following equation:
\begin{align*}
		\tau_{k}(M_3(\Gamma))
		= \,
		&\frac{{\rm i} \zeta_{k}^{-(18 + \sum_{v=1}^{6}w_{v})/4}}{4k \big(\zeta_{2k}-\zeta_{2k}^{-1}\big) \prod_{v=3}^{6} \sqrt{\abs{w_{v}}}}
\\
		&\times \sum_{l_{1}, l_{2} \in (\Z \smallsetminus k\Z) / 2k\Z}
		\bigg( \prod_{v=1}^{2} \frac{\zeta_{k}^{w_{v}l_{v}^{2}/4}}{\zeta_{k}^{l_{v}/2} - \zeta_{k}^{-l_{v}/2}} \bigg) \frac{\zeta_{k}^{l_{1}l_{2}/2} - \zeta_{k}^{-l_{1}l_{2}/2}}{2}
\\
		&\times \prod_{v=3}^{4} \sum_{l_{v} \in \Z / w_{v}\Z} \big( \zeta_{k}^{-(2kl_{v} + l_{1} + 1)^{2}/(4w_{v})} - \zeta_{k}^{-(2kl_{v} + l_{1} - 1)^{2}/(4w_{v})} \big)
\\
		&\times \prod_{v=5}^{6} \sum_{l_{v} \in \Z / w_{v}\Z} \big( \zeta_{k}^{-(2kl_{v} + l_{2} + 1)^{2}/(4w_{v})} - \zeta_{k}^{-(2kl_{v} + l_{2} - 1)^{2}/(4w_{v})} \big).
	\end{align*}
	For any map $ f \colon \Z / w_{3}\Z \times \Z / w_{4}\Z \to \bbC $, we have
	\begin{gather*}
	\sum_{\substack{l_{3} \in \Z / w_{3}\Z \\ l_{4} \in \Z / w_{4}\Z}} f(l_3, l_4)
	=	\sum_{n_{1} \in \Z / w_{3}w_{4}\Z} f(n_1, n_1)
	\end{gather*}
	by the Chinese remainder theorem.
	Hence, for any map $ \phi \colon \Z/2k\Z \times \Z / 2k w_{3}\Z \times \Z / 2k w_{4}\Z \to \bbC $, we have
	\begin{gather*}
		\sum_{l_1 \in (\Z \smallsetminus k\Z) / 2k \Z} \sum_{\substack{l_{3} \in \Z / w_{3}\Z \\ l_{4} \in \Z / w_{4}\Z}} \phi (l_1, l_1 + 2k l_3, l_1 + 2k l_4)
\\ \qquad
		{}= \sum_{l_1 \in (\Z \smallsetminus k\Z) / 2k \Z} \sum_{n_{1} \in \Z / w_{3}w_{4}\Z} \phi (l_1, l_1 + 2k n_1, l_1 + 2k n_1)
	\end{gather*}
	by letting $ f(l_3, l_4) := \phi (l_1, l_1 + 2k l_3, l_1 + 2k l_4) $.
	This sum equals
	\begin{gather*}
	\sum_{m \in (\Z \smallsetminus k\Z) / 2k w_3 w_4\Z} \phi (m, m, m)
	\end{gather*}
	by letting $ m := l_1 + 2k n_1 $.
	Similarly, for any map $ \psi \colon \Z/2k\Z \times \Z / 2k w_{5}\Z \times \Z / 2k w_{6}\Z \to \bbC $, we have
	\begin{align*}
		\sum_{l_2 \in (\Z \smallsetminus k\Z) / 2k \Z} \sum_{\substack{l_{5} \in \Z / w_{5}\Z \\ l_{6} \in \Z / w_{6}\Z}} \psi (l_2, l_2 + 2k l_5, l_2 + 2k l_6)
		= 	\sum_{n \in (\Z \smallsetminus k\Z) / 2k w_5 w_6\Z} \psi (n, n, n).
	\end{align*}
	Thus, we obtain
	\begin{align*}
		\tau_k(M_3(\Gamma))		= {}&\frac{{\rm i} \zeta_{k}^{-(18 + \sum_{v=1}^{6}w_{v} + \sum_{v=3}^{6} 1/w_{v})/4}}{8k \big(\zeta_{2k}-\zeta_{2k}^{-1}\big) \sqrt{MN}}
\\
&\times\sum_{\substack{m \in (\Z \smallsetminus k\Z) / 2kw_3w_4\Z \\ n \in (\Z \smallsetminus k\Z) / 2kw_5w_6\Z}}
\big( \zeta_k^{-(m, -n) S^{-1} \trans{(m, -n)}/4} - \zeta_k^{-(m, n) S^{-1} \trans{(m, n)}/4} \big)
\\
&\times\frac{\big( \zeta_k^{m/(2w_3)} - \zeta_k^{-m/(2w_3)} \big)\big( \zeta_k^{m/(2w_4)} - \zeta_k^{-m/(2w_4)} \big)}{ \zeta_k^{m/2} - \zeta_k^{-m/2}}
\\
&\times\frac{\big( \zeta_k^{n/(2w_5)} - \zeta_k^{-n/(2w_5)} \big)\big( \zeta_k^{n/(2w_6)} - \zeta_k^{-n/(2w_6)} \big)}{\zeta_k^{n/2} - \zeta_k^{-n/2}}
	\end{align*}
	since
	\begin{align*}
		-(m, n) S^{-1} \trans{(m, n)}
		&= -c/M m^{2} + 2mn - a/Nn^{2}
\\
		&= \bigg( w_1 - \frac{1}{w_3} - \frac{1}{w_4} \bigg) m^2 + 2mn + \bigg( w_2 - \frac{1}{w_5} - \frac{1}{w_6} \bigg) n^2.
	\end{align*}
	By replacing $ n \mapsto -n $, we have
	\begin{gather*}
		\sum_{ n \in (\Z \smallsetminus k\Z) / 2kw_5w_6\Z }
		\zeta_k^{-(m, -n) S^{-1} \trans{(m, -n)}/4}
		\frac{\big( \zeta_k^{n/(2w_5)} - \zeta_k^{-n/(2w_5)} \big)\big( \zeta_k^{n/(2w_6)} - \zeta_k^{-n/(2w_6)} \big)}
		{\zeta_k^{n/2} - \zeta_k^{-n/2}}
\\ \qquad
		{}= -\sum_{ n \in (\Z \smallsetminus k\Z) / 2kw_5w_6\Z }
		\zeta_k^{-(m, n) S^{-1} \trans{(m, n)}/4}
		\frac{\big( \zeta_k^{n/(2w_5)} - \zeta_k^{-n/(2w_5)} \big)\big( \zeta_k^{n/(2w_6)} - \zeta_k^{-n/(2w_6)} \big)}
		{\zeta_k^{n/2} - \zeta_k^{-n/2}}.
	\end{gather*}
	Therefore we prove the claim.
\end{proof}

Then we calculate the asymptotic expansion of $G^\omega\big(\zeta_k^{m/(2M)} {\rm e}^{-tK/(2M)}\big) G^\varpi\big(\zeta_k^{n/(2N)} {\rm e}^{-tL/(2N)}\big)$.

\begin{lem} \label{lem:G(z)_asymptotic}
	Let $ K $ and $ L$ be positive real numbers and $\mu = \trans{(m, n)}$ be a pair of integers.
	For sufficiently small $t > 0$,
	\begin{gather*}
		G^\omega\big(\zeta_k^{m/(2M)} {\rm e}^{-tK/(2M)}\big) G^\varpi\big(\zeta_k^{n/(2N)} {\rm e}^{-tL/(2N)}\big)
\\ \qquad
	{}	= 		\sum_{\gamma = \trans{(\alpha, \beta)} \in (2S)^{-1}(\Z^2)/k\Z^2} \veps(\gamma) \bm{e}\bigg(\frac{1}{k} {\trans{\gamma}} \mu\bigg)
		\sum_{j,l = -1}^\infty \frac{B_{j+1}(\alpha/k) B_{l+1}(\beta/k)}{(j+1)! (l+1)!} K^j L^l (-kt)^{j+l}
	\end{gather*}
	holds.
\end{lem}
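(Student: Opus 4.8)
The plan is to factor the claimed identity as a product over the two coordinates, reduce it to a one-variable statement, and then expand a geometric series by means of the generating function of the Bernoulli polynomials. On the left, $G^{\omega}$ depends only on $m$ and $G^{\varpi}$ only on $n$. On the right, $(2S)^{-1}(\Z^{2}) = \frac{1}{2M}\Z \oplus \frac{1}{2N}\Z$ by the argument of Remark~\ref{rem:S_inv_factor}, so $\gamma = \trans{(\alpha,\beta)}$ runs over $\alpha \in \frac{1}{2M}\Z/k\Z$ and $\beta \in \frac{1}{2N}\Z/k\Z$ independently; moreover $\veps(\gamma) = \chi^{(w_{3},w_{4})}(2M\alpha)\,\chi^{(w_{5},w_{6})}(2N\beta)$, $\bm{e}\big(\frac{1}{k}\trans{\gamma}\mu\big) = \bm{e}\big(\frac{m\alpha}{k}\big)\bm{e}\big(\frac{n\beta}{k}\big)$, and $(-kt)^{j+l} = (-kt)^{j}(-kt)^{l}$. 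So it is enough to prove that, for nonzero integers $w, w'$, an integer $r$ and a real number $K > 0$,
\begin{gather*}
G^{(w,w')}\big( \zeta_{k}^{r/(2ww')}\, {\rm e}^{-tK/(2ww')} \big)
= \sum_{\alpha} \chi^{(w,w')}(2ww'\alpha)\, \bm{e}\bigg( \frac{r\alpha}{k} \bigg) \sum_{j=-1}^{\infty} \frac{B_{j+1}(\alpha/k)}{(j+1)!}\, K^{j}(-kt)^{j},
\end{gather*}
the sum being over representatives of $\frac{1}{2ww'}\Z/k\Z$ in $[0,k)$ (as in Lemma~\ref{lem:Phi_asymptotic}); applying this with $(w,w',r) = (w_{3},w_{4},m)$ at rate $K$ and with $(w,w',r) = (w_{5},w_{6},n)$ at rate $L$, and multiplying, yields the lemma --- the product of the two right-hand sides, each an absolutely convergent Laurent series in $t$ for $t$ small, being exactly the asserted double series.

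For the one-variable identity, put $P := ww'$ and start from the absolutely convergent expansion $G^{(w,w')}(z) = -\sum_{\nu \ge 1} \chi^{(w,w')}(\nu)\, z^{\nu}$ of Proposition~\ref{prop:WRT_inv}, valid since $\abs{z} = {\rm e}^{-tK/(2P)} < 1$. With $z = \zeta_{k}^{r/(2P)}\, {\rm e}^{-tK/(2P)}$ one has $z^{\nu} = \bm{e}\big(\frac{r\nu}{2Pk}\big)\, {\rm e}^{-tK\nu/(2P)}$, and since $\chi^{(w,w')}$ has period $2P$ while $\bm{e}\big(\frac{r\nu}{2Pk}\big)$ has period $2Pk$ in $\nu$, group $\nu = \nu_{0} + 2Pk\ell$ with $\nu_{0} \in \{0,\dots,2Pk-1\}$, $\ell \ge 0$, and sum the geometric series in $\ell$. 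Pulling out the term $\nu_{0} = 0$ --- the only residue for which $\nu \ge 1$ forces $\ell \ge 1$ --- and using $\chi^{(w,w')}(0) = 0$ (which one checks from the definition; it fails only for $\{\abs{w},\abs{w'}\} = \{2,2\}$, which is excluded in the present setting by the unimodularity of $W$), one obtains
\begin{gather*}
G^{(w,w')}\big( \zeta_{k}^{r/(2P)}\, {\rm e}^{-tK/(2P)} \big)
= -\frac{1}{1 - {\rm e}^{-tKk}} \sum_{\alpha} \chi^{(w,w')}(2P\alpha)\, \bm{e}\bigg( \frac{r\alpha}{k} \bigg) {\rm e}^{-tK\alpha},
\end{gather*}
the sum over $\alpha = \nu_{0}/(2P) \in \frac{1}{2P}\Z \cap [0,k)$.

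It remains to expand $\frac{{\rm e}^{-tK\alpha}}{1 - {\rm e}^{-tKk}}$. Writing $s := tKk$ and $u := \alpha/k$, the generating function $\frac{x\, {\rm e}^{xu}}{{\rm e}^{x}-1} = \sum_{n \ge 0} B_{n}(u)\frac{x^{n}}{n!}$ together with $B_{n}(1-u) = (-1)^{n}B_{n}(u)$ gives
\begin{gather*}
\frac{{\rm e}^{-tK\alpha}}{1 - {\rm e}^{-tKk}} = \frac{{\rm e}^{s(1-u)}}{{\rm e}^{s}-1} = \sum_{j=-1}^{\infty} \frac{(-1)^{j+1} B_{j+1}(u)}{(j+1)!}\, s^{j},
\end{gather*}
a Laurent series in $s$ with radius of convergence $2\pi$, hence valid for all sufficiently small $t > 0$. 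Since $-(-1)^{j+1} s^{j} = (-1)^{j}(tKk)^{j} = K^{j}(-kt)^{j}$, the overall minus sign of $G^{(w,w')}$ cancels the sign $(-1)^{j+1}$ and the one-variable identity drops out; multiplying the two instances completes the proof. No step is deep: the substantive input is the Bernoulli generating function, whose radius of convergence is what upgrades the statement from an asymptotic expansion to an exact identity for small $t$, and the points that need care are the absolute convergence legitimising the regrouping, the vanishing of the $\nu_{0} = 0$ boundary term, and the sign bookkeeping that produces $K^{j}(-kt)^{j}$ rather than $(-1)^{j+1}(tKk)^{j}$.
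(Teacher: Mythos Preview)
Your proof is correct and follows essentially the same approach as the paper's: expand $G^\omega G^\varpi$ via its series definition, pass to the variable $\gamma$, and invoke the Bernoulli generating function. The paper's own proof is far terser---after rewriting the product as $\sum_{0 \le \gamma} \veps(\gamma)\,\bm{e}\big(\tfrac{1}{k}\trans{\gamma}\mu\big)\,{\rm e}^{-t(K,L)\gamma}$ it simply says ``the definition of Bernoulli polynomial shows the lemma''---so your explicit one-variable factorisation, geometric-series regrouping, handling of the $\nu_0=0$ boundary term, and remark on the radius of convergence just fill in details the paper leaves implicit.
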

\begin{proof}
	By the definitions of $G^\omega$ and $G^\varpi$, the left hand side of the above equation equals to
	\begin{gather*}
	\sum_{m', n' = 0}^\infty \chi^\omega(m') \chi^\varpi(n')
	\big(\zeta_k^{m} {\rm e}^{-tK}\big)^{m'/2M} \big(\zeta_k^{n} {\rm e}^{-tL}\big)^{n'/2N}.
	\end{gather*}
	Letting $ \gamma = \trans{(m'/(2M), n'/(2N))} $ leads to
	\begin{gather*}
	\sum_{0 \le \gamma \in (2S)^{-1}(\Z^2)} \veps(\gamma) \bm{e}\bigg( \frac{1}{k} \trans{\gamma} \mu \bigg)
	{\rm e}^{-t (K, L) \gamma}
	\end{gather*}
	and the definition of Bernoulli polynomial shows the lemma.
\end{proof}

The constant term in the above expansion is simplified in the following lemma.

\begin{prop} \label{prop:G(z)_value}
	For $ \mu = \trans{(m, n)} \in (\Z \smallsetminus k\Z)^2 $
	\begin{gather*}
	G^\omega\big(\zeta_k^{m/(2M)}\big) G^\varpi\big(\zeta_k^{n/(2N)}\big)
	= \frac{1}{k^2} \sum_{ \gamma = \trans{(\alpha, \beta)} \in (2S)^{-1}(\Z^2) \cap [0, k)^2}
	\veps(\gamma) \bm{e}\bigg( \frac{1}{k} \trans{\gamma} \mu \bigg) \alpha \beta
	\end{gather*}
	holds.
\end{prop}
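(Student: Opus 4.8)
The plan is to start from the series expansion
$ G^\omega\big(\zeta_k^{m/(2M)}\big) G^\varpi\big(\zeta_k^{n/(2N)}\big)
= \sum_{0 \le \gamma \in (2S)^{-1}(\Z^2)} \veps(\gamma) \bm{e}\big( \tfrac{1}{k} \trans{\gamma} \mu \big) e^{-t(K,L)\gamma} $
at $t = 0$, i.e.\ to apply Lemma~\ref{lem:G(z)_asymptotic} with, say, $K = L = 1$ and extract the constant term $j = l = -1$. By Lemma~\ref{lem:G(z)_asymptotic} that term is
$ \sum_{\gamma = \trans{(\alpha, \beta)} \in (2S)^{-1}(\Z^2)/k\Z^2} \veps(\gamma) \bm{e}\big(\tfrac{1}{k} \trans{\gamma} \mu\big) B_1(\alpha/k) B_1(\beta/k) $,
because $B_0 = 1$ and the $(-kt)^{j+l}$ factor is $1$ when $j+l = 0$. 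So the first task is to argue that the left-hand side, which is an honest finite value (here we use $m, n \notin k\Z$ so the denominators $z^{ww'} - z^{-ww'}$ do not vanish and $G^\omega, G^\varpi$ are finite at the relevant roots of unity), equals this constant term; this is the standard Abel-type argument that the radial limit of the (convergent-after-regularization) series picks out the $t^0$ coefficient of the asymptotic expansion, together with the vanishing of all negative powers of $t$ — and the only negative power is $t^{-1}$, coming from $j + l = -1$, whose coefficient I will show is zero.

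Next I would reduce $B_1(\alpha/k) B_1(\beta/k)$ to $\tfrac{1}{k^2}\alpha\beta$ modulo terms killed by the $\veps$-twisted Gauss sum. Writing $B_1(x) = x - 1/2$, expand
$ k^2 B_1(\alpha/k) B_1(\beta/k) = \alpha\beta - \tfrac{k}{2}\alpha - \tfrac{k}{2}\beta + \tfrac{k^2}{4} $.
The constant term $\tfrac{k^2}{4}\sum \veps(\gamma)\bm{e}(\tfrac1k\trans\gamma\mu)$ and the two linear terms $\tfrac{k}{2}\alpha$, $\tfrac{k}{2}\beta$ are each of the shape treated in Proposition~\ref{prop:Gauss_sum}$(iii)$ (with the roles of $Q$ replaced by the linear exponential $\tfrac1k\trans\gamma\mu = \tfrac{m}{2M}\cdot 2M\cdot\tfrac{\alpha}{k}\cdot\ldots$; more precisely, summing over $\mu \in$ a fixed residue just produces the inner sum $\sum_\gamma \veps(\gamma)\bm{e}(\tfrac1k\trans\gamma\mu) C(\alpha)$ type expression), so they vanish. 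Actually the cleanest route: the needed vanishing of $\sum_\gamma \veps(\gamma)\bm{e}(\tfrac1k\trans\gamma\mu)\cdot 1$, $\sum_\gamma \veps(\gamma)\bm{e}(\tfrac1k\trans\gamma\mu)\alpha$ and the analogue with $\beta$ is exactly the content of Proposition~\ref{prop:Gauss_sum}$(iii)$ and $(v)$ (with the quadratic exponential degenerated to the linear one, which is the case $\mu = $ the given pair); alternatively one checks directly using $\veps(\alpha,\beta) = \chi^{(w_3,w_4)}(2M\alpha)\chi^{(w_5,w_6)}(2N\beta)$ and $\sum_{\alpha} \chi(2M\alpha) = 0$ that these auxiliary sums vanish. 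What remains is precisely $\tfrac{1}{k^2}\sum_{\gamma \in (2S)^{-1}(\Z^2)/k\Z^2} \veps(\gamma)\bm{e}(\tfrac1k\trans\gamma\mu)\alpha\beta$, and finally I restrict the summation index from $(2S)^{-1}(\Z^2)/k\Z^2$ to $(2S)^{-1}(\Z^2)\cap[0,k)^2$: the latter is a set of coset representatives for the former, but $\alpha\beta$ is \emph{not} invariant under $\gamma \mapsto \gamma + k\Z^2$. However the difference is again a combination of lower-degree sums ($k\alpha$, $k\beta$, $k^2$ with $\veps$-twisted exponential weights) which vanish by the same Gauss-sum vanishing, so replacing representatives is harmless.

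The main obstacle I anticipate is the first step — rigorously justifying that the value $G^\omega\big(\zeta_k^{m/(2M)}\big) G^\varpi\big(\zeta_k^{n/(2N)}\big)$ equals the $t^0$-coefficient of the asymptotic expansion in Lemma~\ref{lem:G(z)_asymptotic}. The series $\sum_{m'} \chi^\omega(m') z^{m'/2M}$ does not converge absolutely on $|z| = 1$, so one cannot simply set $t = 0$; instead one must note that $G^{(w,w')}$ is a rational function regular at the relevant roots of unity (here is where $m, n \notin k\Z$ is used), so $t \mapsto G^\omega(\zeta_k^{m/(2M)}e^{-tK/(2M)})G^\varpi(\zeta_k^{n/(2N)}e^{-tL/(2N)})$ is real-analytic near $t = 0$ with value at $0$ equal to the left-hand side, and then match this against the asymptotic expansion, using that an asymptotic expansion of a function analytic at $0$ coincides with its Taylor expansion, so in particular there is no $t^{-1}$ term (equivalently, the $j+l=-1$ coefficient must vanish, which one can also see directly from Proposition~\ref{prop:Gauss_sum}$(iii)$). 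Once this matching is in place the rest is the bookkeeping with Bernoulli polynomials and the vanishing lemmas described above.
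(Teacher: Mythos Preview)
Your overall strategy matches the paper's: take the radial limit via Lemma~\ref{lem:G(z)_asymptotic}, then eliminate the unwanted pieces of the $t^0$ coefficient. But there are two genuine gaps in your execution.

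First, the $t^0$ coefficient is not just the $(j,l)=(0,0)$ term. The exponent of $t$ in Lemma~\ref{lem:G(z)_asymptotic} is $j+l$, so $(j,l)=(-1,1)$ and $(1,-1)$ also contribute to the constant term (and $(j,l)=(-1,-1)$ gives a $t^{-2}$ term, not only the $t^{-1}$ you mention). Keeping $K,L$ generic as the paper does, the full constant term is
\[
\sum_{\gamma}\veps(\gamma)\,\bm{e}\Bigl(\tfrac{1}{k}\trans{\gamma}\mu\Bigr)
\left(\tfrac{K}{2L}B_2\!\left(\tfrac{\alpha}{k}\right)+\tfrac{L}{2K}B_2\!\left(\tfrac{\beta}{k}\right)+B_1\!\left(\tfrac{\alpha}{k}\right)B_1\!\left(\tfrac{\beta}{k}\right)\right),
\]
so you must also dispose of the two $B_2$ summands.

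Second, Proposition~\ref{prop:Gauss_sum}$(iii)$ and $(v)$ do not apply here: those statements involve an outer sum over $\mu$ in a lattice quotient weighted by $\bm{e}\bigl(\tfrac{1}{4k}\trans{\mu}S^{-1}\mu\bigr)$, whereas you need vanishing for a single fixed $\mu\in(\Z\smallsetminus k\Z)^2$ with a purely linear exponential. Your ``alternatively'' remark is the right fix, and it is exactly what the paper formalises as a separate lemma: writing $\veps(\alpha,\beta)=\chi^\omega(2M\alpha)\chi^\varpi(2N\beta)$ and splitting $\alpha=\alpha_0+l$ with $\alpha_0\in\tfrac{1}{2M}\Z/\Z$, $l\in\Z/k\Z$, the inner geometric sum $\sum_{l}\bm{e}(ml/k)$ vanishes because $k\nmid m$. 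This single observation kills every summand of the form $\sum_\gamma\veps(\gamma)\bm{e}(\tfrac{1}{k}\trans{\gamma}\mu)C(\beta)$ (and symmetrically $C(\alpha)$), hence simultaneously the two $B_2$ terms, the cross terms $-\tfrac{\alpha}{2k}$, $-\tfrac{\beta}{2k}$, $\tfrac14$ coming from expanding $B_1B_1$, and the negative-power coefficients. Once you state and use this lemma, your argument coincides with the paper's; without it, the invocations of Proposition~\ref{prop:Gauss_sum} are not justified.
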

\begin{proof}
	Lemma \ref{lem:G(z)_asymptotic} shows that for any $ K, L > 0 $
	\begin{align*}
		G^\omega\big(\zeta_k^{m/(2M)}\big) G^\varpi\big(\zeta_k^{n/(2N)}\big)
	= {}&\lim_{t \to 0} G^\omega\big(\zeta_k^{m/(2M)} {\rm e}^{-tK/(2M)}\big) G^\varpi\big(\zeta_k^{n/(2N)} {\rm e}^{-tL/(2N)}\big)
\\
		= {}&\sum_{\gamma = \trans{(\alpha, \beta)} \in (2S)^{-1}(\Z^2) \cap [0, k)^2} \veps(\gamma) \bm{e}\bigg(\frac{1}{k} \trans{\gamma} \mu \bigg)
\\
	&\times	\bigg(
		\frac{K}{2L} B_2 \bigg( \frac{\alpha}{k} \bigg)
		+ \frac{L}{2K} B_2 \bigg( \frac{\beta}{k} \bigg)
		+ B_1 \bigg( \frac{\alpha}{k} \bigg) B_1 \bigg( \frac{\beta}{k} \bigg)
		\bigg)
	\end{align*}
	holds.
	The claim follows from the following lemma.
\end{proof}
\begin{lem} 
	For any $ \mu = \trans{(m, n)} \in (\Z \smallsetminus k\Z) \times \Z $ and any map $ B \colon \Q \to \bbC $, we have
	\begin{gather*}
	\sum_{\gamma = \trans{(\alpha, \beta)} \in (2S)^{-1}(\Z^2) \cap [0, k)^2}
	\veps(\gamma) \bm{e}\bigg( \frac{1}{k} \trans{\gamma} \mu \bigg) B (\beta)
	= 0.
	\end{gather*}
\end{lem}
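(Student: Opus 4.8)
The plan is to use that the weight $\veps$ attached to an H-graph factors as a product in the two coordinates. By the definition in Section~\ref{sec:intro} we have $\veps\big(\trans{(\alpha,\beta)}\big) = \chi^{(w_3,w_4)}(2M\alpha)\,\chi^{(w_5,w_6)}(2N\beta)$, and by Remark~\ref{rem:S_inv_factor} the lattice $(2S)^{-1}\big(\Z^2\big)$ equals $\frac{1}{2M}\Z \oplus \frac{1}{2N}\Z$, so the index set $(2S)^{-1}\big(\Z^2\big) \cap [0,k)^2$ is honestly a product $\big(\frac{1}{2M}\Z \cap [0,k)\big) \times \big(\frac{1}{2N}\Z \cap [0,k)\big)$. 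Since $\trans{\gamma}\mu = m\alpha + n\beta$, the sum in the statement therefore splits as
\begin{gather*}
\bigg( \sum_{\alpha \in \frac{1}{2M}\Z \cap [0,k)} \chi^{(w_3,w_4)}(2M\alpha)\, \bm{e}\bigg( \frac{m\alpha}{k} \bigg) \bigg)
\\
{}\times \bigg( \sum_{\beta \in \frac{1}{2N}\Z \cap [0,k)} \chi^{(w_5,w_6)}(2N\beta)\, \bm{e}\bigg( \frac{n\beta}{k} \bigg) B(\beta) \bigg).
\end{gather*}
Thus it is enough to show that the first factor vanishes; note that this factor does not involve $B$ at all, so the map $B$ and the value of $n$ play no role.

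For the first factor I would use that $\chi^{(w_3,w_4)}$ has period $2w_3w_4 = 2M$, so the function $\alpha \mapsto \chi^{(w_3,w_4)}(2M\alpha)$ has period $1$ on $\frac{1}{2M}\Z$. Writing each $\alpha \in \frac{1}{2M}\Z \cap [0,k)$ uniquely as $\alpha = \alpha_0 + l$ with $\alpha_0 \in \frac{1}{2M}\Z \cap [0,1)$ and $l \in \{0, 1, \dots, k-1\}$, the first factor becomes
\begin{gather*}
\sum_{\alpha_0 \in \frac{1}{2M}\Z \cap [0,1)} \chi^{(w_3,w_4)}(2M\alpha_0)\, \bm{e}\bigg( \frac{m\alpha_0}{k} \bigg)
\sum_{l=0}^{k-1} \bm{e}\bigg( \frac{ml}{k} \bigg).
\end{gather*}
The inner geometric sum equals $k$ if $k \mid m$ and $0$ otherwise, and $m \in \Z \smallsetminus k\Z$ by hypothesis, so it vanishes; hence so does the whole expression, which proves the lemma.

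I do not anticipate any real obstacle: the only ingredients are the product structure of $\veps$, the periodicity of $\chi^{(w_3,w_4)}$, and the elementary identity $\sum_{l=0}^{k-1} \bm{e}(ml/k) = 0$ for $k \nmid m$. The one point I would state with care is that the summation range factors as a product of arithmetic progressions, which is the content of the identification $(2S)^{-1}\big(\Z^2\big) = \frac{1}{2M}\Z \oplus \frac{1}{2N}\Z$. I would also observe that this lemma is precisely what is invoked in the proof of Proposition~\ref{prop:G(z)_value}: it annihilates the $\beta$-dependent $B_2$ term as well as the $\beta$-dependent and constant parts of $B_1(\alpha/k)\,B_1(\beta/k)$, while the mirror statement obtained by interchanging $\alpha$ and $\beta$ (valid when $n \notin k\Z$, proved in exactly the same way) disposes of the remaining $\alpha$-dependent contributions.
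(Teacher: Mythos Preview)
Your proposal is correct and follows essentially the same route as the paper: factor $\veps$ as $\chi^\omega \cdot \chi^\varpi$, split the double sum over $(2S)^{-1}(\Z^2)\cap[0,k)^2$ into a product via $(2S)^{-1}(\Z^2)=\frac{1}{2M}\Z\oplus\frac{1}{2N}\Z$, and then decompose the $\alpha$-sum as a sum over $\frac{1}{2M}\Z\cap[0,1)$ times the geometric sum $\sum_{l=0}^{k-1}\bm{e}(ml/k)$, which vanishes because $k\nmid m$. The paper's proof is just a more compressed rendering of exactly this argument.
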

\begin{proof}
	The left hand side can be written as
	\begin{gather*}
	\sum_{\beta \in \frac{1}{2N}\Z/k\Z} \chi^\varpi(\beta) \bm{e}\bigg( \frac{\beta n}{k} \bigg) B (\beta)
	\sum_{\alpha \in \frac{1}{2M}\Z/\Z} \chi^\omega(\beta) \bm{e}\bigg( \frac{\alpha m}{k} \bigg)
	\sum_{l \in \Z/k\Z} \bm{e}\bigg( \frac{ml}{k} \bigg).
	\end{gather*}
	Since $ k \nmid m $, the last sum vanishes.
	Thus, we obtain the claim.
\end{proof}
Propositions \ref{prop:WRT_inv} and~\ref{prop:G(z)_value} assert that the following proposition holds.
\begin{prop}
	\begin{align*}
		\tau_k(M_3(\Gamma)) = \, \frac{\zeta_{k}^{-(18 + \sum_{v=1}^{6}w_{v} + \sum_{v=3}^{6} 1/w_{v})/4}}{2 \big(\zeta_{2k}-\zeta_{2k}^{-1}\big)}
		\frac{1}{k^{2}} \sum_{ \gamma = \trans{(\alpha, \beta)} \in (2S)^{-1}(\Z^2) \cap [0, k)^2}
		\veps(\gamma) \bm{e}\bigg( \frac{1}{k} Q(\gamma) \bigg) \alpha \beta.
	\end{align*}
\end{prop}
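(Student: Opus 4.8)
The plan is to combine the second formula of Proposition~\ref{prop:WRT_inv} with Proposition~\ref{prop:G(z)_value}. Substituting the value of $G^\omega\big(\zeta_k^{m/(2M)}\big)G^\varpi\big(\zeta_k^{n/(2N)}\big)$ into the formula for $\tau_k(M_3(\Gamma))$ and interchanging the two finite sums, one obtains
\begin{gather*}
\tau_k(M_3(\Gamma)) = \frac{{\rm i}\,\zeta_k^{-(18 + \sum_{v=1}^6 w_v + \sum_{v=3}^6 1/w_v)/4}}{4k^3\big(\zeta_{2k} - \zeta_{2k}^{-1}\big)\sqrt{MN}}
\\
{}\times\sum_{\gamma = \trans{(\alpha,\beta)} \in (2S)^{-1}(\Z^2)\cap[0,k)^2}\veps(\gamma)\,\alpha\beta
\sum_{\mu \in (\Z\smallsetminus k\Z)^2/2kS(\Z^2)}\bm{e}\bigg({-}\frac{\trans{\mu}S^{-1}\mu}{4k} + \frac{\trans{\gamma}\mu}{k}\bigg),
\end{gather*}
so everything reduces to evaluating the inner sum over $\mu$.

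First I would extend the incomplete sum over $\mu \in (\Z\smallsetminus k\Z)^2/2kS(\Z^2)$ to a sum over all of $\Z^2/2kS(\Z^2)$. By inclusion--exclusion the difference is a combination of three sums, supported respectively on $\mu \in k\Z\oplus\Z$, on $\mu\in\Z\oplus k\Z$, and on $\mu\in k\Z^2$ (all modulo $2kS(\Z^2)$), each one weighted by $\veps(\gamma)\,\alpha\beta$ and summed over $\gamma$. These are exactly the sums that occur in Proposition~\ref{prop:Gauss_sum}$(v)$ with $h = 1$, whose hypotheses hold here because $\veps$ factors as the product $\veps\big(\trans{(\alpha,\beta)}\big) = \chi^{(w_3, w_4)}(2M\alpha)\,\chi^{(w_5, w_6)}(2N\beta)$ of maps summing to zero; the opposite sign of the quadratic form in the exponential is harmless, since replacing $\mu$ by $-\mu$ fixes $\veps(\gamma)\,\alpha\beta$ and each of the three sublattices, so the sign-reversed sums are the complex conjugates of those in Proposition~\ref{prop:Gauss_sum}$(v)$ and likewise vanish. (These are the a~priori divergent interpolating terms of \emph{Step}~2: in the original rational-function expression they correspond to the poles where $\zeta_k^{m/2} = \zeta_k^{-m/2}$ or $\zeta_k^{n/2} = \zeta_k^{-n/2}$.)

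Next I would complete the square. As $\gamma\in(2S)^{-1}(\Z^2)$ we have $2S\gamma\in\Z^2$ and
\begin{gather*}
{-}\frac{\trans{\mu}S^{-1}\mu}{4k} + \frac{\trans{\gamma}\mu}{k} = \frac{Q(\gamma)}{k} - \frac{\trans{(\mu - 2S\gamma)}S^{-1}(\mu - 2S\gamma)}{4k},
\end{gather*}
so after the shift $\mu\mapsto\mu + 2S\gamma$, which permutes $\Z^2/2kS(\Z^2)$, the inner sum becomes $\bm{e}(Q(\gamma)/k)$ times the $\gamma$-independent quadratic Gauss sum $G := \sum_{\mu\in\Z^2/2kS(\Z^2)}\bm{e}\big({-}\trans{\mu}S^{-1}\mu/(4k)\big)$. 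It then remains to show $G = -2{\rm i}\,k\sqrt{MN}$. For this I would apply Proposition~\ref{prop:reciprocity} with $L = \Z^2$ equipped with the standard inner product, $z = 0$, and $h := 2k^2 S$: this $h$ is self-adjoint, $h(L')\subset L' = \Z^2$, and $\tfrac{k}{2}\sprod{y, h(y)} = k^3\,\trans{y}Sy\in\Z$ for $y\in\Z^2$ since $S$ is integral. The left-hand side collapses to $k^2$ because $k\,\trans{x}Sx\in\Z$; on the right-hand side $\det h = 4k^4 MN$, the signature is $2$ so $\bm{e}(\sigma/8) = {\rm i}$, and $\sum_{\mu\in\Z^2/2k^2 S(\Z^2)}\bm{e}\big({-}\trans{\mu}S^{-1}\mu/(4k)\big) = k^2 G$ because the summand is invariant modulo $2kS(\Z^2)$. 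Solving gives $G = -2{\rm i}\,k\sqrt{MN}$ (equivalently, $G$ is the complex conjugate of the Milgram Gauss sum of the positive definite even form $2kS$).

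Finally, inserting $G = -2{\rm i}\,k\sqrt{MN}$ turns the prefactor into $\tfrac{{\rm i}\,(-2{\rm i}\,k\sqrt{MN})}{4k^3\sqrt{MN}} = \tfrac{1}{2k^2}$, and what remains is exactly $\dfrac{\zeta_k^{-(18 + \sum_{v=1}^6 w_v + \sum_{v=3}^6 1/w_v)/4}}{2\big(\zeta_{2k} - \zeta_{2k}^{-1}\big)}\cdot\dfrac{1}{k^2}\sum_\gamma\veps(\gamma)\,\bm{e}(Q(\gamma)/k)\,\alpha\beta$, the asserted identity. The one genuinely delicate step is the interpolation: dropping the ``$(\Z\smallsetminus k\Z)^2$'' restriction is what makes the classical reciprocity formula applicable, and its legitimacy rests entirely on the vanishing result Proposition~\ref{prop:Gauss_sum}$(v)$; the evaluation of $G$ and the bookkeeping of constants are then routine.
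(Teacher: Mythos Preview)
Your proof is correct and follows essentially the same route as the paper: substitute Proposition~\ref{prop:G(z)_value} into Proposition~\ref{prop:WRT_inv}, use Proposition~\ref{prop:Gauss_sum}$(v)$ to pass from $(\Z\smallsetminus k\Z)^2$ to $\Z^2$, shift $\mu\mapsto\mu+2S\gamma$ to separate off $\bm{e}(Q(\gamma)/k)$, and evaluate the remaining Gauss sum by reciprocity. You are in fact more explicit than the paper on two points --- the sign discrepancy between the exponential here and in Proposition~\ref{prop:Gauss_sum}$(v)$ (handled by $\mu\mapsto-\mu$/complex conjugation), and the specific choice of data in Proposition~\ref{prop:reciprocity} --- both of which the paper leaves to the reader.
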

\begin{proof}
	Propositions \ref{prop:Gauss_sum}$(v)$, \ref{prop:WRT_inv} and~\ref{prop:G(z)_value} show
	\begin{align*}
		\tau_k(M_3(\Gamma)) = {} &\frac{{\rm i} \zeta_{k}^{-(18 + \sum_{v=1}^{6}w_{v} + \sum_{v=3}^{6} 1/w_{v})/4}}{4k \big(\zeta_{2k}-\zeta_{2k}^{-1}\big) \sqrt{MN}}
		\frac{1}{k^2} \sum_{\mu = \trans{(m, n)} \in \Z^2/2kS(\Z^2)}
		\zeta_k^{-\trans{\mu} S^{-1} \mu/4}
\\
		&\times\sum_{\gamma = \trans{(\alpha, \beta)} \in (2S)^{-1}(\Z^2) \cap [0, k)^2}
		\veps(\gamma) \bm{e}\bigg( \frac{1}{k} \trans{\gamma} \mu \bigg) \alpha \beta.
	\end{align*}
	Replacing $ \mu $ with $ \mu + 2S\gamma $, we obtain
	\begin{align*}
	\tau_k(M_3(\Gamma)) = {}&\frac{{\rm i} \zeta_{k}^{-(18 + \sum_{v=1}^{6}w_{v} + \sum_{v=3}^{6} 1/w_{v})/4}}{4k \big(\zeta_{2k}-\zeta_{2k}^{-1}\big) \sqrt{MN}} G(2kS)
	\frac{1}{k^2}
\\
&\times\sum_{ \gamma = \trans{(\alpha, \beta)} \in (2S)^{-1}(\Z^2) \cap [0, k)^2}
	\veps(\gamma) \bm{e}\bigg( \frac{1}{k} Q(\gamma) \bigg) \alpha \beta,
	\end{align*}
	where
	\begin{gather*}
	G(2kS) := \sum_{\mu = \trans{(m, n)} \in \Z^2/2kS(\Z^2)} \bm{e}\bigg({-}\frac{1}{2} \trans{\mu} (2kS)^{-1} \mu \bigg).
	\end{gather*}
	By Proposition \ref{prop:reciprocity}, $ G(2S) = -2k{\rm i} \sqrt{MN} $ and this completes the proof.
\end{proof}

\subsection*{Acknowledgements}

The first and second author are supported by JSPS KAKENHI Grant Number JP 21J10271 and 20J20308.
The first author was supported by a Scholarship of Tohoku University, Division for Interdisciplinary Advanced Research and Education.
We would like to show their greatest appreciation to Professor Yuji Terashima and Takuya Yamauchi for giving many pieces of advice.
We are deeply grateful to Professor Kazuhiro Hikami for giving many comments.
The first author thanks his family for all the support.
We thank the referees for helpful suggestions and comments which substantially improved the presentation of our paper.

\pdfbookmark[1]{References}{ref}
\LastPageEnding

\end{document}